\DeclareMathOperator{\cl}{cl}
\DeclareMathOperator{\nb}{int}
\DeclareMathOperator{\sub}{sub}
\newcommand{\bbB}{\mathbb{B}}
\newcommand{\bbS}{\mathbb{S}}
\newcommand{\bbL}{\mathbb{L}}
\newcommand{\CT}{\mathcal{T}}
\newcommand{\CO}{\mathcal{O}}
\newcommand{\CF}{\mathcal{F}}
\newcommand{\CG}{\mathcal{G}}
\newcommand{\Btop}{\mathbb{B}\text{-}{\bf Top}}
\newcommand{\dFrm}{\mathbf{dFrm}}
\newcommand{\BFrm}{\mathbb{B}\downarrow\mathbf{Frm}}
\newcommand{\lam}{\lambda}
\newcommand{\dbt}{{t\!t}}
\newcommand{\dbf}{{f\!\!f}}
\newcommand{\ra}{\rightarrow}
\newcommand{\lra}{\longrightarrow}
\newcommand{\id}{\rm id}
\newcommand{\Bpt}{\bbB{\rm pt}}
\newcommand{\BO}{\bbB\mathcal{O}}    
\newcommand{\dO}{{\rm d}\mathcal{O}}
\newcommand{\bv}{\bigvee}
\newcommand{\bw}{\bigwedge}
\theoremstyle{plain}
\newtheorem{thm}{Theorem}[section]
\newtheorem{lem}[thm]{Lemma}
\newtheorem{prop}[thm]{Proposition}
\newtheorem{cor}[thm]{Corollary}
\theoremstyle{definition}
\newtheorem{defn}[thm]{Definition}
\newtheorem{exmp}[thm]{Example}
\title{ A Boolean-valued space approach to separation axioms\\ and sobriety of bitopological spaces} \author{Jing He, Dexue Zhang\\ {\normalsize School of Mathematics, Sichuan University, Chengdu, China}\\ {\normalsize Email: 2267989324@qq.com, dxzhang@scu.edu.cn}}  
\date{}  
\begin{document}
	\maketitle
		
	\begin{abstract} This paper presents a study of separation axioms and sobriety of bitopological spaces from the point of view of fuzzy topology via identifying   bitopological spaces with   topological spaces valued in the Boolean algebra of four elements. A system of separation axioms is proposed making use of Boolean-valued specialization order of bitopological spaces; The relationship between d-sobriety of bitopological spaces proposed by Jung and Moshier and  sobriety of fuzzy topological spaces is studied; A Hofmann-Mislove theorem for bitopological spaces is established. 
		\vskip 2pt 
		\noindent  {Keywords}: Bitopological space; fuzzy topology; separation axiom; frame; specialization order; sober space     \vskip 2pt \noindent  {MSC(2020)}:   54E55, 54A40, 54D10, 06D22 \end{abstract}

		\section{Introduction} 
		
		Bitopological spaces were introduced in 1963 by Kelly \cite{Kelly}. Since then, these spaces have attracted attention in different areas because of their intrinsic connection to quasi-metric spaces, quasi-uniform spaces \cite{Kelly,Lane,Patty}, stably compact spaces \cite{Lawson2010}, and Stone dualities \cite{Jakl,JM2006,JM2008}. In this paper, we study these spaces from the point of view  of fuzzy topology: they are a very special kind of fuzzy topological spaces, namely, fuzzy topological spaces valued in the Boolean algebra $\bbB=\{1,0,\dbt,\dbf\}$ of four elements. A benefit of doing so is that we can apply ideas and results in theories of fuzzy ordered sets (or quantale-enriched categories) and fuzzy topological spaces to the investigation of bitopological spaces. We demonstrate this by proposing a new system of separation axioms and establishing a Hofmann-Mislove theorem for bitopological spaces.  
		
		The contents are arranged as follows. 
		In Section \ref{first section} we recall some basic notions of bitopological spaces and $\bbB$-valued topological spaces, and show that the corresponding categories are isomorphic to each other.
		
		In Section \ref{second section}  a system of separation axioms for bitopological spaces is proposed, including $R_0,R_1$, $T_0,T_1$, Hausdorff, regular, and normal. A key feature of the postulations is that the $\bbB$-valued specialization order of a bitopological space plays a central role. In a recent work of Arrieta, Guti\'{e}rrez Garc\'{i}a and  H\"{o}hle, specialization fuzzy order has been used to characterize the axioms $T_0$ and   $T_1$ for fuzzy topological spaces, see \cite[page 13]{AGH}. But, even in the $\bbB$-valued setting, the $T_1$ axiom here is quite different from that in \cite{AGH}. The postulations presented here may also be applied to fuzzy topological spaces valued in an arbitrary unital quantale, resulting in a system of separation axioms for quantale-valued topological spaces that are different from those in the literature, e.g. \cite{AGH,HR80,Kubiak,Srivastava}.
		
		In Section \ref{third section} we focus on connections between bitopological spaces and their algebraic duals, which are analogous to that between topological spaces and frames. Specifically, we study the relationship between d-sobriety proposed in \cite{JM2006,JM2008} for bitopological spaces and $\bbB$-sobriety proposed in \cite{Zhang2018,ZL95} for fuzzy topological spaces, and establish a Hofmann-Mislove theorem for bitopological spaces that is parallel to that for topological spaces.
		
		We refer to \cite{AHS} for category theory, to \cite{Willard} for general topology, and to  \cite{Gierz2003,Johnstone,PP2012} for ordered sets, frames and sober spaces.
		
		\section{\texorpdfstring{Bitopological spaces and $\mathbb{B}$-topological spaces}{}}  \label{first section}
		
		A bitopological space \cite{Kelly} is a triple $(X, \tau[\dbt], \tau[\dbf])$, where $X$ is a set, $\tau[\dbt]$ and $\tau[\dbf]$ are  topologies on $X$. A continuous (also called bicontinuous in the literature) map $$f\colon(X, \tau_X[\dbt], \tau_X[\dbf]) \lra (Y,\tau_Y[\dbt], \tau_Y[\dbf])$$  is a function $f\colon X\lra Y$ such that both $f\colon(X,\tau_X[\dbt])\lra (Y,\tau_Y[\dbt])$ and  $f\colon(X,\tau_X[\dbf])\lra (Y,\tau_Y[\dbf])$ are continuous. The category of bitopological spaces and  continuous maps is denoted by $$\bf{BiTop}.$$

		Let $\mathbb{B}$ denote the  Boolean algebra $\{ 0,1,\dbt, \dbf \}$,  with $0$ being the bottom element, $1$ being the top element,   $\dbt$ and $\dbf$ being complement of each other, 
		as visualized below:
		\[\bfig \morphism(0,0)/@{-}/<-250,-250>[1`\dbt;]  \morphism(0,0)/@{-}/<250,-250>[1`\dbf;]
		\morphism(-250,-250)/@{-}/<250,-250>[\dbt`0;]  \morphism(250,-250)/@{-}/<-250,-250>[\dbf`0;]
		\efig\]
		As we shall see, the category of bitopological spaces is isomorphic to the category of $\bbB$-valued topological spaces. We introduce some notations first. \begin{itemize}\item The  arrow $\ra$  denotes the implication operator of the Boolean algebra $\bbB$. Explicitly,  $a\ra b=\neg a\vee b$  for all $a,b\in\bbB$, where $\neg$ is the negation operator on $\bbB$; that is, $$\neg\dbt=\dbf,~\neg\dbf=\dbt,~\neg0=1,~\neg1=0.$$  \item For each set $X$, $\bbB^X$ denotes the set of functions $X\lra\bbB$. Following the practice in fuzzy set theory, each element of $\bbB^X$ is called a \emph{$\bbB$-valued subset} of $X$. It is clear that $\bbB^X$ is a complete lattice when ordered pointwise, indeed it is a complete Boolean algebra. For each $\lam\in\bbB^X$, each $A\subseteq X$ and each $b\in\bbB$, the (usual) subset $\lam[b]$ and the $\bbB$-valued subset $b_A$ of $X$  are defined by  $$\lam[b]=\{x\in X: \lam(x)\geq b\}, \quad b_A(x)=\begin{cases} b & x\in A,\\ 0 &{\rm otherwise}.\end{cases}  $$ In particular, for each $x\in X$,  the function $1_x\colon X\lra\bbB$  sends $x$ to $1$ and other points to $0$. \item For each $\lam\in\bbB^X$  and each $b\in\bbB$, the $\bbB$-valued subsets  $b\wedge\lam$ and  $b\ra\lam$ of $X$ are defined by  $$  (b\wedge\lam)(x)=  b\wedge\lam(x),\quad (b\ra\lam)(x)=  b\ra\lam(x).$$ 
		\end{itemize}

		Suppose $X$ is a set. A \emph{$\mathbb{B}$-valued topology} (a $\mathbb{B}$-topology for short) on   $X$ is a subset $\tau$ of $\mathbb{B}^X$ subject to the following conditions:
		\begin{enumerate}[label=\rm(\roman*)]\setlength{\itemsep}{0pt} \item Each constant function $X\lra\mathbb{B}$ is in $\tau$.
			\item The (pointwise) join   of   any subset of $\tau$ is in $\tau $.
			\item The (pointwise) meet of  any finite subset of $\tau$ is in $\tau $.
		\end{enumerate}
		
		A set $X$ together with a $\bbB$-topology $\tau$   is called a \emph{$\mathbb{B}$-topological space}, elements of $\tau$ are called \emph{open  sets}. $\mathbb{B}$-topological spaces are a special kind of quantale-valued (or, fuzzy) topological spaces, see e.g. \cite{Kubiak,Lowen76}.  
		
		A map $f\colon(X,\tau_X)\lra (Y,\tau_Y)$ between $\mathbb{B}$-topological spaces is  \emph{continuous} if $\lambda\circ f\in\tau_X$  for all $\lambda\in\tau_Y$. 
		The category of $\mathbb{B}$-topological spaces and   continuous maps is denoted by  $\mathbb{B}\text{-}{\bf Top}.$

		For each $\bbB$-topological space $(X,\tau)$, let $$\tau[\dbt]=\{\lam[\dbt]: \lam\in\tau\}; ~~ \tau[\dbf]=\{\lam[\dbf]: \lam\in\tau\}.$$ Then $(X, \tau[\dbt],\tau[\dbf])$ is a bitopological space. 
		For each bitopological space $(X,\tau[\dbt], \tau[\dbf])$, let \[\tau= \{ \lambda\in\mathbb{B}^X: \lambda[\dbt] \in\tau[\dbt],\lambda[\dbf]\in\tau[\dbf] \}.\] Then $(X,\tau)$ is a $\bbB$-topological space.  These correspondences are  functorial and  inverse to each other, so ${\bf BiTop}$ and $\Btop$ are isomorphic to each other. Thus, we do not distinguish  a $\bbB$-topological space $(X,\tau)$ from the bitopological space $(X,\tau[\dbt], \tau[\dbf])$,  we'll switch between them freely. 
		
		\begin{exmp} \label{Sierpinski} Let $\tau_\mathbb{S}$ be the $\bbB$-topology on $\bbB$ having  $\{\id_\bbB\}$ as  a subbasis, where $\id_\bbB$ is the identity map on $\bbB$. For each $\bbB$-topological space $(X,\tau)$ and each $\lam\in\bbB^X$, it is clear that $\lam\in\tau$ if and only if $\lam\colon(X,\tau) \lra(\bbB,\tau_\bbS)$ is continuous. Thus, in the category $\Btop$ the space $(\bbB,\tau_\bbS)$ plays a role analogous to the Sierpi\'{n}ski space in the category of topological spaces. In terms of bitopological spaces, the space $(\bbB,\tau_\bbS)$ is  $(\bbB,\tau_\bbS[\dbt],\tau_\bbS[\dbf])$, where $\tau_\bbS[\dbt]=\{\emptyset, \{\dbt,1\},\bbB\}$ and $\tau_\bbS[\dbf]=\{\emptyset, \{\dbf,1\},\bbB\}$. \end{exmp}

		\begin{exmp} \label{X.Y}  For any topological spaces $X$ and $Y$, Jung and Moshier \cite[page 38]{JM2006} constructed a bitopological space $X.Y$ as follows: $$ X.Y=(X\times Y,\tau[\dbt], \tau[\dbf]),$$ where \begin{align*}  \tau[\dbt]&=\{U\times Y: U~\text{is an open set of $X$}\},\\ \tau[\dbf]&=\{X\times V: V~\text{is an open  set of $Y$}\}.\end{align*} The following facts about $X.Y$ will be used in this paper. \begin{enumerate}[label=\rm(\roman*)]\setlength{\itemsep}{0pt} \item Each nonempty open set of $(X\times Y,\tau[\dbt])$ intersects each nonempty open set of $(X\times Y,\tau[\dbf])$. \item Each nonempty closed set of $(X\times Y,\tau[\dbt])$ intersects each nonempty closed set of $(X\times Y,\tau[\dbf])$. \item A function $\lambda\colon X\times Y\lra\bbB$ is a member of $\tau$ if and only if there exist an open set $U$ of $X$ and an open set $V$ of $Y$ such that $$\lambda(x,y)=\begin{cases} 1 & x\in U,~y\in V, \\ \dbt & x\in U,~y\notin V, \\ \dbf & x\notin U,~y\in V,\\ 0 & x\notin U,~y\notin V.  \end{cases} $$   \end{enumerate} \end{exmp}
		
		We need some terminology from fuzzy topology. 
		Suppose $(X,\tau)$ is a $\bbB$-topological space and  $\lam\in\bbB^X$.  \begin{enumerate}[label=\rm(\alph*)]\setlength{\itemsep}{0pt} \item  We say that $\lambda$ is a \emph{closed  set}   if $\neg\lambda$ is an open set, where $\neg$ is the negation operator on  $\bbB$ extended pointwise to $\bbB^X$.  Since $\bbB$ is a Boolean algebra, it is readily seen that $\lam$ is a closed set of $(X,\tau)$ if and only if $\lam[\dbt]$ is a closed set of $(X,\tau[\dbt])$ and $\lam[\dbf]$ is a closed set of $(X,\tau[\dbf])$.
			\item  The \emph{interior} $\lam^\circ$ of $\lambda$  is defined to be the open set  $\bigvee \{\mu\in\tau: \mu\leq\lambda \}$.
			\item The \emph{closure} $\overline{\lambda}$ of $\lambda$ is defined to be the closed set $\bigwedge \{\mu:\neg\mu\in\tau, \mu\geq\lambda\}.$
		\end{enumerate}
		
		We list here some basic properties of the closure operator and the interior operator of a $\mathbb{B}$-topological space $(X,\tau)$, leaving verification   to the reader. 
		
		For all $\lam,\mu\in\mathbb{B}^X$ and all $b\in\bbB$,  (i) $\lam\leq \overline{\lam}$;
		(ii) $\overline{\lam\vee\mu} =\overline{\lam}\vee\overline{\mu}$; 
		(iii) $\overline{b\wedge\lam}=b\wedge\overline{\lam}$;  (iv) $\overline{\lam}[\dbt]=\cl_{\dbt}\lam[\dbt]$ and $\overline{\lam}[\dbf]=\cl_{\dbf}\lam[\dbf]$. In particular, $ \overline{1_y}(x)\geq\dbt \iff x\in\cl_\dbt\{y\}$ and $\overline{1_y}(x)\geq\dbf \iff x\in\cl_\dbf\{y\}  $ for all $x,y\in X$,  where $\cl_\dbt$ and $\cl_\dbf$ denote the closure operators of the topological spaces $(X,\tau[\dbt])$ and $(X,\tau[\dbf])$, respectively. We remind the reader that property (iii) does not hold for a general fuzzy topological space, property (iv) makes sense only for $\bbB$-valued topological spaces.

		Dually,  for all $\lam,\mu\in\mathbb{B}^X$ and all $b\in\bbB$, (i') $\lam\geq \lam^\circ$;
		(ii') $(\lam\wedge\mu)^\circ = \lam^\circ\wedge\mu^\circ$;
		(iii') $(b\ra\lam)^\circ=b\ra\lam^\circ$;
		(iv') $\lam^\circ[\dbt] =\nb_{\dbt}\lam[\dbt]$ and $\lam^\circ[\dbf]= \nb_{\dbf}\lam[\dbf]$.

		Suppose $(X,\CT)$ is a topological space. Let $$\omega(\CT) =\{\lam\in\bbB^X:\lam[\dbt]\in\CT,\lam[\dbf]\in\CT\}.$$ Then $\omega(\CT)$ is a $\bbB$-topology on $X$.  
		The assignment $(X,\CT)\mapsto(X,\omega(\CT))$ defines a full and faithful functor $$\omega\colon {\bf Top}\lra\Btop.$$ The functor $\omega$ has a right adjoint $$\iota\colon\Btop\lra{\bf Top}.$$  The functors $\omega$ and $\iota$  are special cases of the Lowen functors in fuzzy topology, see e.g. \cite{Lowen76,Kubiak}.  
		In terms of bitopological space, the functor $\omega$ sends a topological space $(X,\CT)$ to $(X,\CT,\CT)$, the functor $\iota$ sends a bitopological space $(X,\tau[\dbt], \tau[\dbf])$ to the topological space $(X,\tau[\dbt]\vee\tau[\dbf])$.  
		
		Two more functors, $$\iota_\dbt, \iota_\dbf\colon   \Btop\lra {\bf Top},$$  will be needed. For each $\bbB$-topological space $(X,\tau)$, the functor $\iota_\dbt$ sends it to $(X,\tau[\dbt])$, forgetting  $\tau[\dbf]$; the functor $\iota_\dbf$ sends it to $(X,\tau[\dbf])$, forgetting   $\tau[\dbt]$. We leave it to the reader to check that $\iota_\dbt$ has at the same time a left and a right adjoint; likewise for $\iota_\dbf$.

		\begin{defn}(\cite{Lal,BBGK}) Suppose  $P$ is a topological property and $X$ is a bitopological space. We say that \begin{enumerate}[label=\rm(\roman*)]\setlength{\itemsep}{0pt}
				\item $X$ is  join $P$ if the topological space $(X,\tau[\dbt]\vee\tau[\dbf])$  is $P$. \item $X$ is  componentwise $P$\footnote{It is called \emph{bi-$P$} in \cite{Lal,BBGK}.} if both of the topological spaces $(X,\tau[\dbt])$ and $(X,\tau[\dbf])$ are $P$. \end{enumerate} \end{defn}

		Suppose  $P$ is a topological property. It is clear that for any   topological spaces  $X$ and $Y$,   the $\bbB$-topological space $X.Y= (X\times Y,\tau)$ in Example \ref{X.Y} is join $P$  if and only if the product space $X\times Y$ is $P$.  
		
		\section{Separation axioms} \label{second section}
		
		In this section we use the $\bbB$-valued specialization  order of $\bbB$-topological spaces to postulate lower separation axioms for bitopological spaces, including $R_0,R_1,T_0,T_1$ and Hausdorff. $\bbB$-valued order is a special instance of fuzzy order, related definitions are included here for sake of self-containment.
		
		A \emph{$\bbB$-valued order} ($\bbB$-order for short) on a set $X$ is a map $r\colon X\times X\lra\bbB$ such that for all $x,y,z\in X$, \begin{enumerate}[label=$\bullet$]\setlength{\itemsep}{0pt}
			\item $r(x,x)=1$;  \item $r(y,z)\wedge r(x,y)\leq r(x,z)$. \end{enumerate}  
		
		Suppose $(X,r_X)$ and $(Y,r_Y)$ are $\bbB$-ordered sets. A map $f\colon X\lra Y$ \emph{preserves $\bbB$-order} if $r_X(x_1,x_2)\leq r_Y(f(x_1),f(x_2))$ for all $x_1,x_2\in X$. The category of $\bbB$-ordered sets and maps preserving $\bbB$-order  is denoted by  $\bbB\text{-}{\bf Ord}.$ 
		
		There is a general method to generate $\bbB$-orders. Suppose $X$ is a set and $\Lambda\subseteq\bbB^X$. Then $$\Omega(\Lambda)\colon  X\times X\lra\mathbb{B},\quad \Omega(\Lambda)(x,y)=\bigwedge\limits_{\lam\in \Lambda}(\lambda(x)\rightarrow\lambda(y)) $$ is a $\bbB$-order on $X$. Interesting instances of this method include: \begin{enumerate}[label=\rm(\roman*)]\setlength{\itemsep}{0pt}
			\item (Implication operator) If $X=\bbB$ and $\Lambda=\{\id_\bbB\}$, then  $\Omega(\Lambda)$ is the implication operator $$\ra\colon\bbB\times\bbB\lra\bbB, \quad (a,b)\mapsto \neg a\vee b.$$ 
			\item (Inclusion $\bbB$-order) For each set $X$, consider the family $\Lambda=\{[x]\colon \bbB^X\lra\bbB\}_{x\in X}$ of functions, where $[x](\lam)=\lam(x)$ for all $\lam\in\bbB^X$. Then   $\Omega(\Lambda)$   is the \emph{inclusion $\bbB$-order} $\sub_X$ on $\bbB^X$: $$\Omega(\Lambda)(\lam,\mu)=  \sub_X(\lambda,\mu)=\bigwedge\limits_{x\in X}(\lambda(x)\rightarrow\mu(x)). $$ Intuitively, the value $\sub_X(\lambda,\mu)$ measures the truth degree that $\lambda$ is contained in $\mu$, so the name.
		\end{enumerate}

		\begin{defn}Suppose $(X,\tau)$ is a $\mathbb{B}$-topological space. The $\bbB$-order $\Omega(\tau)$ on $X$ generated by $\tau$ is called the  specialization $\bbB$-order  of $(X,\tau)$. Explicitly, for all $x,y\in X$, $$\Omega(\tau)(x,y)= \bigwedge_{\lam\in\tau}(\lam(x)\ra\lam(y)).$$   \end{defn} 
		
		Some basic properties of the specialization $\bbB$-order are given in the following proposition.
		\begin{prop}\label{specialization order via component}
			Let $(X,\tau)$ be a $\bbB$-topological space. Let $\leq_\dbt$ and $\leq_\dbf$ be the specialization order of  the topological spaces $(X,\tau[\dbt])$ and $(X,\tau[\dbf])$, respectively. Then 
			\begin{enumerate}[label=\rm(\roman*)]\setlength{\itemsep}{0pt} \item For all $x,y\in X$, $$\Omega(\tau)(x,y)\geq\dbt\iff x\leq_\dbt y;\quad  \Omega(\tau)(x,y)\geq\dbf\iff x\leq_\dbf y.$$ So  the specialization $\bbB$-order of  $(X,\tau)$ determines and is determined by the specialization orders of the topological spaces $(X,\tau[\dbt])$ and $(X,\tau[\dbf])$.
				\item   $ \Omega(\tau)(x,y)=\overline{1_y}(x), $ where $\overline{1_y}$ is the closure of $1_y$ in $(X,\tau)$.  
				\item  Each subbasis $\mathcal{B}$ of $\tau$ generates $\Omega(\tau)$; that is,   $\Omega(\tau)(x,y)= \bigwedge_{\lam\in\mathcal{B}}(\lam(x)\ra\lam(y)). $   
		\end{enumerate} \end{prop}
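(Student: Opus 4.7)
The plan is to handle (i), (ii), (iii) in order, letting each later part borrow from the earlier ones. For (i), I would first use the Boolean adjunction $a \leq (b \to c) \iff a \wedge b \leq c$ to unfold the inequality $\Omega(\tau)(x,y) \geq \dbt$ into the assertion that $\dbt \wedge \lam(x) \leq \lam(y)$ for every $\lam \in \tau$. Since $\dbt \wedge t$ equals $\dbt$ if $t \geq \dbt$ and is $0$ otherwise, this in turn says exactly that $x \in \lam[\dbt]$ implies $y \in \lam[\dbt]$ for every $\lam \in \tau$. Because the map $\lam \mapsto \lam[\dbt]$ is a surjection from $\tau$ onto $\tau[\dbt]$ by the correspondence of Section~\ref{first section}, this is precisely the specialization relation $x \leq_\dbt y$ of the space $(X,\tau[\dbt])$. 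The $\dbf$-case is symmetric. The ``determines and is determined by'' clause is then automatic, since any element of $\bbB$ is characterized by whether it dominates $\dbt$ and whether it dominates $\dbf$.

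For (ii), I would simply hand the work to (i). By property (iv) of the closure operator recorded just before the proposition, $\overline{1_y}[\dbt] = \cl_\dbt\{y\}$ and $\overline{1_y}[\dbf] = \cl_\dbf\{y\}$. Hence $\overline{1_y}(x) \geq \dbt$ iff $x \in \cl_\dbt\{y\}$ iff $x \leq_\dbt y$ iff $\Omega(\tau)(x,y) \geq \dbt$, and a parallel chain holds for $\dbf$. Since the two cuts determine an element of $\bbB$, equality $\overline{1_y}(x) = \Omega(\tau)(x,y)$ follows.

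For (iii), one inequality, $\Omega(\tau)(x,y) \leq \bigwedge_{\lam \in \mathcal{B}}(\lam(x)\to\lam(y))$, is immediate from $\mathcal{B} \subseteq \tau$. For the reverse, setting $r := \bigwedge_{\lam \in \mathcal{B}}(\lam(x)\to\lam(y))$, the plan is to verify that $S := \{\mu \in \bbB^X : r \wedge \mu(x) \leq \mu(y)\}$ is a $\bbB$-topology. Constant maps lie in $S$ because $r \wedge c \leq c$; closure under pointwise joins follows at once from the distributivity of $\bbB$ in the form $r \wedge \bigvee_i \lam_i(x) = \bigvee_i (r \wedge \lam_i(x)) \leq \bigvee_i \lam_i(y)$; closure under finite pointwise meets is a short computation using the two single-factor inequalities in turn. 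Since $\mathcal{B} \subseteq S$ and $\mathcal{B}$ is a subbasis of $\tau$, this forces $\tau \subseteq S$, and applying the Boolean adjunction back to an arbitrary $\mu \in \tau$ gives $r \leq \Omega(\tau)(x,y)$.

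The only slightly delicate step is the opening translation in (i) between the $\bbB$-valued inequality $\Omega(\tau)(x,y) \geq \dbt$ and the classical cut-wise condition; once that is in hand, parts (ii) and (iii) run essentially by inspection, with (iii) lifting to the $\bbB$-valued setting the standard argument that a subbasis suffices to compute the specialization preorder of an ordinary topology.
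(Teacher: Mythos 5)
Your proposal is correct and complete. Note that the paper states this proposition without proof (it is presented as a list of ``basic properties''), so there is no authorial argument to compare against; your write-up supplies the missing verification in what is surely the intended way. Part (i) via the adjunction $a\leq(b\ra c)\iff a\wedge b\leq c$ together with the observation that $\dbt\wedge t\in\{0,\dbt\}$ is exactly the right translation between the $\bbB$-valued inequality and the cut-wise specialization orders, and the surjectivity of $\lam\mapsto\lam[\dbt]$ onto $\tau[\dbt]$ is by definition of $\tau[\dbt]$. Part (ii) correctly reduces to (i) via the closure property $\overline{\lam}[\dbt]=\cl_\dbt\lam[\dbt]$ listed just before the proposition (so it inherits the paper's own ``left to the reader'' status for that identity, which is acceptable here). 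Part (iii) is the only clause needing a genuine argument, and your device --- showing that $S=\{\mu: r\wedge\mu(x)\leq\mu(y)\}$ is itself a $\bbB$-topology containing $\mathcal{B}$, hence containing $\tau$ --- is the standard and correct one; the frame distributivity of $\bbB$ used for closure of $S$ under arbitrary joins is the one point worth flagging explicitly, and you do.
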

		
		The assignment $(X,\tau)\mapsto(X,\Omega(\tau))$ gives rise to a functor $\Omega\colon \Btop\lra\bbB\text{-}{\bf Ord}$. It is known that $\Omega$ is a right adjoint functor, see e.g.  \cite{LZ2006,Zhang2018}.
		
		\begin{exmp}Suppose $X,Y$ are  topological spaces. Consider the $\bbB$-topological space $X.Y= (X\times Y,\tau)$ in Example \ref{X.Y}. Then for all $(x_1,y_1)$ and $(x_2,y_2)$ of $X\times Y$, $$ \Omega(\tau)((x_1,y_1),(x_2,y_2))=\begin{cases}1 & x_1\leq_X x_2,~y_1\leq_Y y_2, \\ \dbt & x_1\leq_X x_2,~y_1\not\leq_Y y_2, \\ \dbf & x_1\not\leq_X x_2,~y_1\leq_Y y_2, \\ 0 & x_1\not\leq_X x_2,~y_1\not\leq_Y y_2,\end{cases} $$ where $\leq_X$ and $\leq_Y$ denote the specialization order of the topological spaces $X$ and $Y$, respectively. \end{exmp}

		\subsection{\texorpdfstring{$R_0$, $T_0$ and $T_1$}{}}
		Suppose $X$ is a topological space. By the \emph{Kolmogorov equivalence relation} of $X$ we mean the relation given by $x\sim y$ if the closure of $\{x\}$ is equal to that of $\{y\}$.  It is well-known that the quotient space $X/\!\sim$ (a.k.a Kolmogorov quotient) is a $T_0$ space, it is the $T_0$-reflection of $X$.
		
		\begin{defn}(\cite{Davis}) A topological space $X$ is  $R_0$  provided that if $U$ is an open set and  $x\in U$, then the closure of $\{x\}$ is contained in $U$.\end{defn}
		
		A topological space is $R_0$ if and only if its Kolmogorov quotient is $T_1$. The following  fact should have been observed somewhere else, but we fail to locate where it is explicitly stated for the first time.

		\begin{lem}\label{R0=symmetric}   A topological space $X$ is $R_0$ if and only if its specialization order  is symmetric. \end{lem}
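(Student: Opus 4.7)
The plan is to prove both directions by directly unwinding the defining characterization of the specialization order, namely that $x \leq y$ in $X$ is equivalent to $x \in \overline{\{y\}}$, which in turn is equivalent to the condition that every open set containing $x$ also contains $y$. Both implications are then one-line arguments: a contradiction using the complement of a point-closure in one direction, and a direct inclusion chase in the other.

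For the forward direction, I would assume $X$ is $R_0$ and suppose $x \leq y$, so that $x \in \overline{\{y\}}$. To derive $y \leq x$, I would argue by contradiction: if $y \not\leq x$, i.e.\ $y \notin \overline{\{x\}}$, then $U := X \setminus \overline{\{x\}}$ is an open set containing $y$, so by $R_0$ we get $\overline{\{y\}} \subseteq U$. But $x \in \overline{\{y\}}$ would then give $x \notin \overline{\{x\}}$, a contradiction.

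For the reverse direction, I would assume the specialization order of $X$ is symmetric, take an open $U$ with $x \in U$, and show $\overline{\{x\}} \subseteq U$. If $z \in \overline{\{x\}}$, then $z \leq x$; by symmetry $x \leq z$, and since $x \in U$ the definition of the specialization order immediately yields $z \in U$.

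There is essentially no obstacle here beyond being careful about the direction/convention of the specialization order (the paper's convention, via $\Omega$, is $x \leq y$ iff $\lambda(x) \leq \lambda(y)$ for every open $\lambda$, equivalently $x \in \overline{\{y\}}$). Once that is fixed, both implications are a direct two-line argument, so the whole proof should be a short paragraph rather than a multi-step construction.
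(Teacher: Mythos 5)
Your proof is correct and follows essentially the same route as the paper's: both directions hinge on applying $R_0$ to the open complement of a point-closure (you phrase the forward direction by contradiction where the paper uses the contrapositive, and your reverse direction is a slightly more direct inclusion chase, but these are cosmetic differences). The convention you fix for the specialization order ($x\leq y$ iff $x\in\overline{\{y\}}$) matches the paper's, so no adjustment is needed.
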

		
		\begin{proof} Suppose $X$ is $R_0$. If $x\not\leq y$,  the open set $X\setminus\overline{\{y\}}$ contains $x$, hence the closure of $\{x\}$, then $y\not\leq x$. This shows that the specialization order is symmetric. Conversely, suppose the specialization order of $X$ is symmetric,  $U$ is an open set and $x\in U$. For each $y\notin U$, since the closure of $\{y\}$ misses $U$, it follows that $x\not\leq y$, then $y\not\leq x$ by symmetry of the specialization order, which means $y$ is not in the closure of $\{x\}$, and consequently, $X$ is $R_0$. \end{proof}

		Lemma \ref{R0=symmetric} motivates the following postulation of $R_0$ for $\bbB$-topological spaces. 
		
		\begin{defn}Suppose $(X,\tau)$ is a $\bbB$-topological space.  We say that  \begin{enumerate}[label=\rm(\roman*)]\setlength{\itemsep}{0pt} \item  $(X,\tau)$ is  $R_0$    if   $\Omega(\tau)$ is   symmetric in the sense that $\Omega(\tau)(x,y)= \Omega(\tau)(y,x) $ for all $x,y\in X$.
				\item $(X,\tau)$ is  $T_0$    if   $\Omega(\tau)$ is   separated in the sense that $\Omega(\tau)(x,y)=1=\Omega(\tau)(y,x)\implies x=y$.  \item $(X,\tau)$ is $T_1$   if it is both $T_0$ and $R_0$. \end{enumerate} 
		\end{defn}
		
		It is readily verified that a $\bbB$-topological space $(X,\tau)$ is $R_0$ if, and only if, for each open set $\lam$ and each point $x$, it always holds that  $\lam(x)\leq\sub_X(\overline{1_x}, \lam).$  This inequality says that if $\lam$ contains $x$, then it contains the closure of $1_x$. 
		We list below some basic properties of the axioms of $R_0,T_0$ and $T_1$. The verification is straightforward, we omit it.
		
		\begin{prop}\label{T-0} Suppose $(X,\tau)$ is a $\bbB$-topological space. The following are equivalent: \begin{enumerate}[label=\rm(\arabic*)]\setlength{\itemsep}{0pt}
				\item $(X, \tau)$ is    $T_0$. 
				\item For each pair $x,y$ of distinct points of $X$, there is some $\lambda\in\tau$ such that  $\lambda(x)\neq\lambda(y)$.   \item $(X,\tau)$ is join $T_0$ in the sense of \cite{Lal,BBGK}; that is, the topological space $(X, \tau[\dbt]\vee\tau[\dbf])$ is $T_0$. \end{enumerate} 
		\end{prop}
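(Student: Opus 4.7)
The plan is to establish the cyclic chain $(1)\Rightarrow(2)\Rightarrow(3)\Rightarrow(1)$, with Proposition \ref{specialization order via component}(i) as the main bridge between the $\bbB$-valued specialization order and the specialization orders of the component topologies.

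For $(1)\Rightarrow(2)$, suppose $(X,\tau)$ is $T_0$ and $x\neq y$. Separatedness of $\Omega(\tau)$ means $\Omega(\tau)(x,y)\neq 1$ or $\Omega(\tau)(y,x)\neq 1$. By definition of $\Omega(\tau)$ as an infimum of implications, some $\lam\in\tau$ must satisfy $\lam(x)\to\lam(y)\neq 1$ or $\lam(y)\to\lam(x)\neq 1$; in either case $\lam(x)\neq\lam(y)$.

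For $(2)\Rightarrow(3)$, given distinct $x,y$, pick $\lam\in\tau$ with $\lam(x)\neq\lam(y)$. Since $\bbB=\{0,\dbf,\dbt,1\}$ and the two principal upsets $[\dbt,1]=\{\dbt,1\}$ and $[\dbf,1]=\{\dbf,1\}$ jointly order-separate the elements of $\bbB$, at least one of $\lam[\dbt]$, $\lam[\dbf]$ contains exactly one of $x,y$. Both sets are open in $\tau[\dbt]\vee\tau[\dbf]$, so this topology is $T_0$.

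For $(3)\Rightarrow(1)$, the key observation is that the specialization order of the join topology $\tau[\dbt]\vee\tau[\dbf]$ is precisely the intersection $\leq_\dbt\cap\leq_\dbf$, because any point in a finite intersection of basic opens from the two topologies is witnessed by membership in each component. By Proposition \ref{specialization order via component}(i) we have $\Omega(\tau)(x,y)=1$ iff $x\leq_\dbt y$ and $x\leq_\dbf y$, that is, iff $x$ is below $y$ in the specialization order of the join topology. Thus $\Omega(\tau)(x,y)=1=\Omega(\tau)(y,x)$ is equivalent to $x$ and $y$ being specialization-equivalent in $(X,\tau[\dbt]\vee\tau[\dbf])$, which by $T_0$ forces $x=y$.

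No step presents a real obstacle; the only point that requires care is the small case-check in $(2)\Rightarrow(3)$ that relies specifically on the four-element structure of $\bbB$ (and would need to be reformulated for a general quantale). The proof is routine once Proposition \ref{specialization order via component}(i) is invoked to translate $\Omega(\tau)$-based conditions into conditions on the two component specialization orders.
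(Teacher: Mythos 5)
Your proof is correct. The paper itself omits the verification of this proposition (declaring it straightforward), and your argument is exactly the routine check the authors have in mind: all three implications go through, the case analysis in $(2)\Rightarrow(3)$ correctly uses that an element of $\bbB$ is determined by whether it lies above $\dbt$ and above $\dbf$, and the identification of the join topology's specialization order with $\leq_\dbt\cap\leq_\dbf$ together with Proposition \ref{specialization order via component}\thinspace(i) settles $(3)\Rightarrow(1)$.
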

		
		\begin{prop}\label{R0 is pairwise} Suppose $(X,\tau)$ is a $\bbB$-topological space. Then \begin{enumerate}[label=\rm(\roman*)] \setlength{\itemsep}{0pt} \item   $(X,\tau)$ is $R_0$ if and only if it is componentwise $R_0$; that is, both of the topological spaces $(X,\tau[\dbt])$ and $(X,\tau[\dbf])$ are $R_0$. \item $(X,\tau)$ is $T_1$ if and only if it is join $T_0$ and componentwise $R_0$. \end{enumerate} \end{prop}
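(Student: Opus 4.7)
The plan is to reduce everything to the level-set description of the specialization $\bbB$-order given in Proposition \ref{specialization order via component}(i), and then invoke Lemma \ref{R0=symmetric} componentwise.

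For (i), the key observation is that two values $a,b\in\bbB$ coincide if and only if, for each $c\in\{\dbt,\dbf\}$, one has $a\geq c\iff b\geq c$, because $\bbB$ is isomorphic to $\{0,1\}\times\{0,1\}$ via $a\mapsto(a\geq\dbt,a\geq\dbf)$. Applying this pointwise to $\Omega(\tau)$ and using Proposition \ref{specialization order via component}(i), symmetry of $\Omega(\tau)$ (i.e.\ $\Omega(\tau)(x,y)=\Omega(\tau)(y,x)$ for all $x,y$) is equivalent to the conjunction of the two statements
\[ x\leq_\dbt y\iff y\leq_\dbt x, \qquad x\leq_\dbf y\iff y\leq_\dbf x \]
holding for all $x,y\in X$, i.e.\ to the symmetry of both $\leq_\dbt$ and $\leq_\dbf$. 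By Lemma \ref{R0=symmetric} this is exactly the condition that $(X,\tau[\dbt])$ and $(X,\tau[\dbf])$ are both $R_0$.

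For (ii), by definition $(X,\tau)$ is $T_1$ iff it is both $T_0$ and $R_0$. Proposition \ref{T-0} identifies the $T_0$ axiom with being join $T_0$, and part (i) identifies the $R_0$ axiom with being componentwise $R_0$; combining the two gives the statement.

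There is no real obstacle; the only thing to be careful about is the reduction of equality in $\bbB$ to its $\dbt$- and $\dbf$-level sets, which is what makes the $\bbB$-valued condition split cleanly into its two topological components. Note in particular that componentwise $R_0$ does not automatically give $T_1$ without the join $T_0$ hypothesis, since $R_0$ of each component by itself does not separate points.
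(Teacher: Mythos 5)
Your proof is correct: the reduction of equality in $\bbB$ to its $\dbt$- and $\dbf$-level sets, combined with Proposition \ref{specialization order via component}(i) and Lemma \ref{R0=symmetric}, is exactly the ``straightforward verification'' the paper omits, and part (ii) follows formally from the definitions as you say. Nothing to add.
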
 
		
		\begin{cor}
			\begin{enumerate}[label=\rm(\roman*)] \setlength{\itemsep}{0pt} \item Subspaces and products of $T_0$   spaces are $T_0$.
				\item A topological space $X$ is $T_0$ if and only if the $\bbB$-topological  space $\omega(X)$ is $T_0$. \item Subspaces and products of $T_1$ spaces are $T_1$. \item A topological space $X$ is $T_1$ if and only if the  $\bbB$-topological  space $\omega(X)$ is $T_1$.\end{enumerate}
		\end{cor}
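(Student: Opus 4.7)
The plan is to reduce all four statements to their classical topological counterparts by invoking the structural descriptions already proved: Proposition \ref{T-0} (which identifies $T_0$ in $\Btop$ with being join $T_0$) and Proposition \ref{R0 is pairwise} (which identifies $T_1$ with being join $T_0$ plus componentwise $R_0$), together with Lemma \ref{R0=symmetric}. The bookkeeping step that has to be done once and for all is to check how the join topology and the component topologies behave under subspaces and products in $\Btop$ (equivalently, in $\bf BiTop$). Using the isomorphism with $\bf BiTop$, the subspace of $(X,\tau[\dbt],\tau[\dbf])$ on $Y\subseteq X$ carries the pair $(\tau[\dbt]|_Y,\tau[\dbf]|_Y)$, and the product carries the pair of product topologies; hence both operations commute with taking the join topology and with taking each component topology.

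For (i) and (ii), I would argue as follows. By Proposition \ref{T-0}, a $\bbB$-topological space is $T_0$ iff its join topology is $T_0$. For (i), combine this with the fact that the join topology of a subspace (resp.\ a product) in $\Btop$ is the subspace (resp.\ product) of the join topology, and invoke the classical result that subspaces and products of $T_0$ spaces are $T_0$. For (ii), note that $\omega(X)$ is $(X,\CT,\CT)$, so its join topology is $\CT$ itself, and the equivalence is immediate from Proposition \ref{T-0}.

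For (iii) and (iv), I would apply Proposition \ref{R0 is pairwise}(ii): a $\bbB$-topological space is $T_1$ iff it is join $T_0$ and both $(X,\tau[\dbt])$ and $(X,\tau[\dbf])$ are $R_0$. The join-$T_0$ half has just been handled in (i)/(ii). For the $R_0$ half, I would use Lemma \ref{R0=symmetric} to restate the $R_0$ property of a topological space as symmetry of its specialization order; symmetry is obviously inherited by restriction to subsets and by products (where the specialization order of the product is the product of the specialization orders). For (iv) specifically, both components of $\omega(X)$ equal $\CT$, so componentwise $R_0$ reduces to $X$ being $R_0$; combined with the join-$T_0$ equivalence, $\omega(X)$ is $T_1$ iff $X$ is both $T_0$ and $R_0$, i.e.\ $T_1$ in the classical sense.

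There is no real obstacle here, only a choice of packaging. The one point that requires care is the identification, under the isomorphism ${\bf BiTop}\cong\Btop$, of limits in $\Btop$ (subspaces and products) with the componentwise classical limits; once this is stated explicitly, each of the four assertions follows by a one-line appeal to a classical result together with a prior proposition of the paper.
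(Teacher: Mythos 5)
Your argument is correct and follows exactly the route the paper intends: the corollary is stated without proof immediately after Proposition \ref{T-0} and Proposition \ref{R0 is pairwise}, and the intended derivation is precisely the reduction to join $T_0$ and componentwise $R_0$ (via Lemma \ref{R0=symmetric}) combined with the classical facts, once one notes that subspaces and products in $\Btop$ are computed componentwise and commute with forming the join topology. Your explicit attention to that last compatibility point is the only nontrivial bookkeeping, and you handle it correctly.
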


		Given a $\bbB$-topological space $(X,\tau)$, define an equivalence relation on $X$ by $$x\sim y\iff \forall \lam\in\tau, \lam(x)=\lam(y).$$ The equivalence relation $\sim$ is exactly the intersection of the Kolmogorov equivalence relations of $(X,\tau[\dbt])$ and $(X,\tau[\dbf])$.  The quotient $\bbB$-topological space of $(X,\tau)$ with respect to $\sim$ is $T_0$ and it is the  $T_0$-reflection of $(X,\tau)$. So, the full subcategory of $T_0$ spaces is reflective in the category of $\bbB$-topological spaces. 
		
		\begin{prop}A $\bbB$-topological space is $R_0$ if and only if its $T_0$-reflection is $T_1$.  \end{prop}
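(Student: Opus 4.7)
The plan is to exploit the fact that $T_1$ is just $T_0 \wedge R_0$, together with the fact that the quotient $(X/\!\sim,\tau/\!\sim)$ is automatically $T_0$ by construction (as stated in the paragraph preceding the proposition). So the proposition reduces to showing
\[
(X,\tau) \text{ is } R_0 \iff (X/\!\sim,\tau/\!\sim) \text{ is } R_0,
\]
i.e.\ that $R_0$ is transported in both directions by the $T_0$-reflection $q\colon(X,\tau)\lra(X/\!\sim,\tau/\!\sim)$.

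The key lemma I would prove is the identity
\[
\Omega(\tau/\!\sim)(q(x),q(y)) = \Omega(\tau)(x,y) \qquad (x,y\in X).
\]
For this, I would observe that the equivalence relation $\sim$ is engineered precisely so that each $\lam\in\tau$ factors uniquely as $\lam=\widetilde\lam\circ q$ for some $\widetilde\lam\colon X/\!\sim\lra\bbB$, and that $\tau/\!\sim$ is exactly the collection $\{\widetilde\lam:\lam\in\tau\}$. Consequently the indexing sets of the meets
\[
\bigwedge_{\mu\in\tau/\sim}\bigl(\mu(q(x))\ra\mu(q(y))\bigr) \quad\text{and}\quad \bigwedge_{\lam\in\tau}\bigl(\lam(x)\ra\lam(y)\bigr)
\]
biject in a value-preserving way, which yields the identity above.

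Once this identity is in hand, the conclusion is immediate: $\Omega(\tau/\!\sim)$ is symmetric if and only if $\Omega(\tau)$ is symmetric, so $(X/\!\sim,\tau/\!\sim)$ is $R_0$ iff $(X,\tau)$ is $R_0$; combining with the $T_0$-ness of the reflection gives the equivalence with $T_1$. An alternative route would pass through Proposition \ref{R0 is pairwise} and the classical fact (Lemma \ref{R0=symmetric}) that a crisp topological space is $R_0$ iff its Kolmogorov quotient is $T_1$, but this is mildly delicate because $\sim$ is generally strictly finer than the Kolmogorov equivalences of $(X,\tau[\dbt])$ and $(X,\tau[\dbf])$ individually, so one would need the extra observation that every open set of $(X,\tau[\dbt])$ (resp.\ $(X,\tau[\dbf])$) is already $\sim$-saturated.

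There is no real obstacle here; the only point demanding a little care is the factorisation claim $\lam=\widetilde\lam\circ q$ and its converse, but both follow directly from the definition of $\sim$ and the construction of the quotient $\bbB$-topology. The rest is a one-line argument about symmetry of a $\bbB$-order pulled back along a surjection.
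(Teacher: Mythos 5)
Your argument is correct. The paper states this proposition without proof (it is treated as a routine consequence of the preceding construction of the $T_0$-reflection), and what you supply is exactly the natural argument filling that gap: since every $\lam\in\tau$ is constant on $\sim$-classes, the map $\lam\mapsto\widetilde\lam$ is a value-preserving bijection $\tau\to\tau/\!\sim$, whence $\Omega(\tau/\!\sim)(q(x),q(y))=\Omega(\tau)(x,y)$, and surjectivity of $q$ then transports symmetry of the specialization $\bbB$-order in both directions; combined with the $T_0$-ness of the quotient and the definition $T_1=T_0\wedge R_0$, the equivalence follows. Your side remark about the alternative route via Proposition \ref{R0 is pairwise} is also sound, but the direct route you chose is cleaner and is the one the authors evidently have in mind.
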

		
		By Proposition \ref{T-0}\thinspace(3),   our $T_0$ axiom coincides with join $T_0$, hence  with the $T_0$ axiom in \cite{Mu}. The following proposition together with Example \ref{T_0 of X.Y}  and Example \ref{eg2} show that our $T_1$ axiom is strictly between join $T_1$ and  componentwise $T_1$. 
		
		\begin{prop}\label{T_1} Suppose $(X,\tau)$ is a $\bbB$-topological space. 
			\begin{enumerate}[label=\rm(\roman*)] \setlength{\itemsep}{0pt} \item If $(X,\tau)$ is $T_1$, then it is join $T_1$.  
				\item If $(X,\tau)$ is componentwise $T_1$, then $(X,\tau)$ is $T_1$. \end{enumerate}
		\end{prop}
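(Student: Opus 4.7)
The plan is to reduce both statements to Proposition \ref{R0 is pairwise}(ii), which characterizes $T_1$ in $\Btop$ as the conjunction of join $T_0$ and componentwise $R_0$, and then to use only standard facts about topological $R_0$ and $T_0$ spaces.

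For (i), suppose $(X,\tau)$ is $T_1$. By Proposition \ref{R0 is pairwise}(ii) it is join $T_0$ and componentwise $R_0$. To deduce that the topological space $(X,\tau[\dbt]\vee\tau[\dbf])$ is $T_1$, I would first observe that its specialization order $\leq$ is the intersection of $\leq_{\dbt}$ and $\leq_{\dbf}$: indeed, since $\tau[\dbt]\cup\tau[\dbf]$ is a subbasis for the join topology, $x\leq y$ iff every member of $\tau[\dbt]\cup\tau[\dbf]$ containing $x$ also contains $y$, i.e.\ iff $x\leq_{\dbt} y$ and $x\leq_{\dbf} y$. Componentwise $R_0$ means, by Lemma \ref{R0=symmetric}, that each of $\leq_{\dbt}$ and $\leq_{\dbf}$ is symmetric, so their intersection $\leq$ is symmetric, and applying Lemma \ref{R0=symmetric} again the join topology is $R_0$. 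Combined with join $T_0$, the specialization order of the join topology is both symmetric and antisymmetric, hence discrete, and therefore the join topology is $T_1$.

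For (ii), assume $(X,\tau)$ is componentwise $T_1$. Then both $(X,\tau[\dbt])$ and $(X,\tau[\dbf])$ are in particular $R_0$, so by Proposition \ref{R0 is pairwise}(i) the space $(X,\tau)$ is $R_0$. Moreover each component is $T_0$, which already forces the join topology $\tau[\dbt]\vee\tau[\dbf]$ to separate points, so $(X,\tau)$ is join $T_0$. Appealing once more to Proposition \ref{R0 is pairwise}(ii), $(X,\tau)$ is $T_1$.

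There is essentially no obstacle: the whole argument is a matter of unpacking definitions and invoking Proposition \ref{R0 is pairwise} and Lemma \ref{R0=symmetric}. The only point that needs a brief verification is the identification of the specialization order of the join topology as the intersection of the two component specialization orders, which follows from the subbasis description. No new calculations involving the $\bbB$-order $\Omega(\tau)$ are required beyond what is already encoded in Proposition \ref{R0 is pairwise}.
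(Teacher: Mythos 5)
Your proof is correct, but it routes through a different decomposition than the paper's. For (i), the paper argues directly with the $\bbB$-order $\Omega(\tau)$: from symmetry and separatedness it gets $\Omega(\tau)(x,y)<1$ for $x\neq y$, and then uses the explicit open set $\neg\overline{1_y}$ (via Proposition \ref{specialization order via component}\thinspace(ii)) to produce a join-neighborhood of $x$ missing $y$, and symmetrically for $y$. You instead pass to the classical specialization orders: you show the join topology is $R_0$ because the specialization order of the join is $\leq_\dbt\cap\leq_\dbf$ (computed from the subbasis $\tau[\dbt]\cup\tau[\dbf]$) and an intersection of symmetric relations is symmetric, and then conclude join $T_1$ from ``symmetric $+$ antisymmetric $\Rightarrow$ identity.'' Both are sound; the paper's version exhibits the separating open sets concretely and stays inside the $\bbB$-valued formalism, while yours isolates the purely order-theoretic content and reuses Lemma \ref{R0=symmetric} and Proposition \ref{R0 is pairwise} so that no new computation with $\Omega(\tau)$ is needed. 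For (ii) the paper simply observes that componentwise $T_1$ forces $\Omega(\tau)$ to be the identity relation, hence symmetric and separated; your version (componentwise $T_1$ $\Rightarrow$ componentwise $R_0$ $+$ join $T_0$ $\Rightarrow$ $T_1$ by Proposition \ref{R0 is pairwise}\thinspace(ii)) is an equivalent but slightly longer unwinding of the same facts. The only step in your write-up that deserved the verification you flagged --- that the specialization order of the join topology is the intersection of the component orders --- is indeed correct and is the classical shadow of Proposition \ref{specialization order via component}\thinspace(iii).
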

		
		\begin{proof}
			(i) Suppose that $(X,\tau)$ is a $T_1$ space. Let $x,y$ be a pair of distinct points of $X$.   Since $\Omega(\tau)$ is symmetric and separated, it follows that $\Omega(\tau)(x,y)=\Omega(\tau)(y,x)<1$. Let $\lam$ be the complement of the closure of $1_y$; that is, $\lam=\neg(\overline{1_y})$.  Then $\lambda\in\tau$, $\lambda(y)=0$  and  $\lambda(x)>0$. Hence there is a neighborhood of $x$ in $\iota(X,\tau)$ not containing $y$. Likewise, there is a neighborhood of $y$ in $\iota(X,\tau)$ not containing $x$. This shows that  $(X,\tau)$ is join $T_1$.
			
			(ii) If both $(X,\tau[\dbt])$ and $(X,\tau[\dbf])$ are $T_1$, then  the specialization $\bbB$-order  $\Omega(\tau)$ of $(X,\tau)$ is the identity relation on $X$, hence  symmetric and separated.  
		\end{proof}
		
		\begin{exmp}\label{T_0 of X.Y} Suppose $X,Y$ are  topological spaces. Consider the $\bbB$-topological space  $X.Y= (X\times Y,\tau)$  in Example \ref{X.Y}. Then \begin{enumerate}[label=\rm(\roman*)] \setlength{\itemsep}{0pt} \item $X.Y$ is $T_0$  (or equivalently, join $T_0$) if and only if  both of the topological spaces $X$ and $Y$ are $T_0$. \item $X.Y$ is $R_0$  if and only if both of the topological spaces $X$ and $Y$ are $R_0$. \item $X.Y$ is $T_1$ if and only if both of the topological spaces $X$ and $Y$ are $T_1$,    if and only if $X.Y$ is join $T_1$. \item  $X.Y$ is componentwise $T_0$ if and only if   $X$ and $Y$ are singleton spaces, if and only if $X.Y$ is componentwise $T_1$.   
			\end{enumerate} 
			
			From (iii) and (iv) it follows that  $T_1$ does not imply componentwise $T_0$. \end{exmp} 
		
		\begin{exmp}\label{eg2} Join $T_1$ does not imply $T_1$. Consider the $\bbB$-topological space $(\mathbb{R},\tau)$, where $\tau[\dbt]$ and $\tau[\dbf]$ are the left and the right topology on $\mathbb{R}$, respectively; that is,  $$\tau[\dbt]=\left\{\emptyset,\mathbb{R}\right\}\cup\left\{(-\infty,r):r\in\mathbb{R}\right\}, \quad  \tau[\dbf]=\left\{\emptyset,\mathbb{R}\right\}\cup\left\{(r,\infty):r\in\mathbb{R}\right\}.$$  Then $\tau[\dbt]\vee\tau[\dbf]$ is the usual topology on $\mathbb{R}$, hence $(\mathbb{R},\tau)$ is  join $T_1$. But, the $\bbB$-topological space $(\mathbb{R},\tau)$ is not $R_0$. To see this, take $x,y\in \mathbb{R}$ with $x<y$. Then  $\Omega(\tau)(x,y)=\overline{1_y}(x)=\dbf \not=\dbt= \overline{1_x}(y)=\Omega(\tau)(y,x).$  \end{exmp}
		
		\subsection{\texorpdfstring{$R_1$}{} and Hausdorff} \label{R1 and Hausdorff}
		
		\begin{defn}(\cite{Davis,R1space}) A topological space $X$ is $R_1$   provided that if $x$ and $y$ are points of $X$ such that $\overline{\{x\}}\not= \overline{\{y\}}$, then $x$ and $y$ have disjoint neighborhoods. \end{defn}
		
		Like $R_0$, $R_1$  can also be characterized in terms of specialization order.
		
		\begin{lem}\label{R1=closed} Let $X$ be a topological space and $\leq$ be its specialization order. The  following are equivalent: \begin{enumerate}[label=\rm(\arabic*)]\setlength{\itemsep}{0pt}
				\item $X$ is $R_1$. \item   $\leq$ is a closed set of the product space $X\times X$. \item $\leq\cap \leq^{\rm op}$ is a closed set of the product space $X\times X$. \end{enumerate} \end{lem}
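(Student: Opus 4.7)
My plan is to prove the cyclic implications (1)$\Rightarrow$(2)$\Rightarrow$(3)$\Rightarrow$(1). The key observation tying $R_1$ to the specialization order is that $\overline{\{x\}}=\overline{\{y\}}$ if and only if $x\leq y$ and $y\leq x$: since $x\in\overline{\{x\}}$, equality of the closures forces $x\in\overline{\{y\}}$, i.e.\ $x\leq y$, and symmetrically $y\leq x$; conversely $x\leq y\leq x$ gives each of $\{x\},\{y\}$ in the closure of the other, so their closures coincide. I will also use the standard fact that $x\leq y$ is equivalent to ``every open neighborhood of $x$ contains $y$'', which follows directly from $x\leq y\iff x\in\overline{\{y\}}$ as in the proof of Lemma~\ref{R0=symmetric}.

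For (1)$\Rightarrow$(2), I take $(x,y)\notin{\leq}$, so $x\not\leq y$; in particular $\overline{\{x\}}\neq\overline{\{y\}}$, so by $R_1$ there are disjoint open sets $U\ni x$ and $V\ni y$. I then verify that $(U\times V)\cap{\leq}=\emptyset$: if $(x',y')\in U\times V$ satisfied $x'\leq y'$, then the open set $U$ containing $x'$ would contain $y'$, contradicting $y'\in V$ and $U\cap V=\emptyset$. Thus the complement of ${\leq}$ is open.

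The implication (2)$\Rightarrow$(3) is immediate because the swap map $(x,y)\mapsto(y,x)$ is a self-homeomorphism of $X\times X$, so ${\leq}^{\rm op}$ is closed whenever ${\leq}$ is, and the intersection of two closed sets is closed.

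For (3)$\Rightarrow$(1), suppose $\overline{\{x\}}\neq\overline{\{y\}}$, so $(x,y)\notin{\leq}\cap{\leq}^{\rm op}$ by the key observation. Since this set is closed, there exist open $U\ni x$ and $V\ni y$ with $(U\times V)\cap({\leq}\cap{\leq}^{\rm op})=\emptyset$. The decisive trick is then to invoke reflexivity: if some $z\in U\cap V$ existed, then $(z,z)\in U\times V$ and $z\leq z$ would put $(z,z)$ in ${\leq}\cap{\leq}^{\rm op}$, a contradiction. Hence $U\cap V=\emptyset$, giving the disjoint neighborhoods required by $R_1$. The main subtlety is really this last diagonal argument; everything else is a direct translation between $R_1$ and the specialization order.
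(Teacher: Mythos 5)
Your proof is correct and follows essentially the same route as the paper: disjoint neighborhoods give a product neighborhood missing $\leq$ for (1)$\Rightarrow$(2), the swap homeomorphism for (2)$\Rightarrow$(3), and the reflexivity/diagonal argument for (3)$\Rightarrow$(1). The only difference is that you spell out a couple of verifications (e.g.\ that $U\times V$ misses $\leq$) which the paper leaves implicit.
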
 
		
		\begin{proof} $(1)\Rightarrow(2)$ If $x\not\leq y$, then $\overline{\{x\}}\not= \overline{\{y\}}$, hence   $x$ and $y$ have disjoint neighborhoods, say $U$ and $V$. So $U\times V$ is a neighborhood of $(x,y)$ in the product space that misses $\leq$, then $\leq$ is a closed set of $X\times X$. 
			
			$(2)\Rightarrow(3)$ Follows from the fact that $X\times X\lra X\times X,~(x,y)\mapsto(y,x)$ is a homeomorphism. 
			
			$(3)\Rightarrow(1)$
			If $\overline{\{x\}}\not= \overline{\{y\}}$, then either $x\not\leq y$ or $y\not\leq x$, hence $(x,y)$ is not in $\leq\cap \leq^{\rm op}$. 
			Since $\leq\cap \leq^{\rm op}$ is a closed set of $X\times X$, there exist open sets $U$ and $V$ of $X$ such that $U\times V$ contains  $(x,y)$ but misses $\leq\cap \leq^{\rm op}$; in particular $U\times V$ misses the diagonal of $X\times X$. Thus, $U$ and $V$ are disjoint neighborhoods of $x$ and $y$. \end{proof}
		
		It follows from the above lemma that (i) a topological space $X$ is $R_1$ if and only if its $T_0$-reflection is Hausdorff; in particular, $X$ is Hausdorff if and only if it is both $T_0$ and $R_1$; (ii) every regular space is $R_1$; and (iii) subspaces and products of $R_1$ spaces are $R_1$.  These conclusions are already known in the literature (e.g. \cite{Davis,R1space}) and are easy to prove without resort to Lemma \ref{R1=closed}. The benefit of Lemma \ref{R1=closed} is that it  leads to the following:  
		
		\begin{defn}\label{def of Hausdorff}
			Suppose $(X,\tau)$ is a $\bbB$-topological space.  We say that \begin{enumerate}[label=\rm(\roman*)] \setlength{\itemsep}{0pt} \item  $(X,\tau)$ is   $R_1$   if   $\Omega(\tau)$ is a closed set of the product space $(X,\tau)\times(X,\tau)$. \item $(X,\tau)$ is Hausdorff  if it is both $T_0$ and $R_1$. \end{enumerate}
		\end{defn}
		
		\begin{prop} \label{Prehaus is pairwise} Suppose $(X,\tau)$ is a $\bbB$-topological space. Then $(X,\tau)$ is $R_1$ if and only if it is componentwise $R_1$; that is, both of the topological spaces $(X,\tau[\dbt])$ and $(X,\tau[\dbf])$ are $R_1$. Therefore, $(X,\tau)$ is Hausdorff if and only if it is componentwise $R_1$ and join $T_0$.
		\end{prop}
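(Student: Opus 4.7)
The plan is to reduce the statement to the topological version Lemma \ref{R1=closed} by passing to the two component topologies, using the fact that closedness of a $\bbB$-valued set is detected componentwise and that products in $\Btop$ are compatible with the functors $\iota_\dbt$ and $\iota_\dbf$.

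First I would unpack what $\Omega(\tau)$ looks like as a $\bbB$-valued subset of $X\times X$. By Proposition \ref{specialization order via component}, for all $x,y\in X$,
\[\Omega(\tau)[\dbt]=\{(x,y):x\leq_\dbt y\}=\mathord{\leq_\dbt},\qquad \Omega(\tau)[\dbf]=\{(x,y):x\leq_\dbf y\}=\mathord{\leq_\dbf},\]
where $\leq_\dbt,\leq_\dbf$ are the specialization orders of $(X,\tau[\dbt])$ and $(X,\tau[\dbf])$. Combined with the fact recorded after the definition of closed set (that $\lambda$ is closed iff $\lambda[\dbt]$ and $\lambda[\dbf]$ are closed), the condition that $\Omega(\tau)$ is a closed set of $(X,\tau)\times(X,\tau)$ will translate into two classical closedness conditions, one for each component.

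Next I would identify the components of the product $\bbB$-topological space. Since the functors $\iota_\dbt,\iota_\dbf\colon\Btop\to\mathbf{Top}$ are both left adjoints (they have right adjoints), they preserve products; alternatively, a direct check on the subbasis $\{\lambda\circ\pi_1,\mu\circ\pi_2:\lambda,\mu\in\tau\}$ of the product $\bbB$-topology shows that $(\lambda\circ\pi_i)[\dbt]=\pi_i^{-1}(\lambda[\dbt])$, so the $\dbt$-component of $(X,\tau)\times(X,\tau)$ is exactly the product topology on $(X,\tau[\dbt])\times(X,\tau[\dbt])$, and likewise for $\dbf$. Putting the two observations together gives: $\Omega(\tau)$ is closed in $(X,\tau)\times(X,\tau)$ if and only if $\mathord{\leq_\dbt}$ is closed in $(X,\tau[\dbt])\times(X,\tau[\dbt])$ and $\mathord{\leq_\dbf}$ is closed in $(X,\tau[\dbf])\times(X,\tau[\dbf])$. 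By Lemma \ref{R1=closed} applied to each component topology, this is equivalent to $(X,\tau[\dbt])$ and $(X,\tau[\dbf])$ both being $R_1$, which is the claim.

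The Hausdorff statement is then immediate: by definition, $(X,\tau)$ is Hausdorff iff it is both $T_0$ and $R_1$; Proposition \ref{T-0} identifies $T_0$ with join $T_0$, and the first half of the proposition identifies $R_1$ with componentwise $R_1$, so the two clauses combine to give the claim.

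The only step that requires any real care is the identification of the components of the product $\bbB$-topology. One small subtlety to flag is that one must use the subbasis description, not just arbitrary joins and finite meets, since $(-)[b]$ does not commute with joins and meets of $\bbB$-valued subsets in general; however, both $(-)[\dbt]$ and $(-)[\dbf]$ respect the operations needed to pass from a subbasis to a basis and then to a topology, which is exactly the content of Proposition \ref{specialization order via component}(iii) and the analogous facts recorded after the definition of closed set. Once this bookkeeping is in place, the proof is a short combination of Proposition \ref{specialization order via component} and Lemma \ref{R1=closed}.
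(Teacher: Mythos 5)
Your proof is correct and follows essentially the same route as the paper's: identify $\Omega(\tau)[\dbt]$ and $\Omega(\tau)[\dbf]$ with the component specialization orders, observe that $\iota_\dbt$ and $\iota_\dbf$ send the product $\bbB$-space to the products of the component spaces, and invoke Lemma \ref{R1=closed} componentwise. One small slip worth fixing: preservation of products follows from $\iota_\dbt$ being a \emph{right} adjoint (i.e., having a left adjoint), not from its being a left adjoint --- the paper notes it has both, and in any case your direct subbasis check is the correct and sufficient justification.
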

		
		\begin{proof}
			Suppose $(X,\tau)$ is $R_1$.  
			Since $\Omega(\tau)$ is a closed set of the product space $(X,\tau)\times(X,\tau)$, then $\Omega(\tau)[\dbt]= \{(x,y): \Omega(\tau)(x,y)\geq\dbt\}$ is a closed set of $\iota_\dbt((X,\tau)\times(X,\tau))= (X, \tau[\dbt])\times (X,\tau[\dbt])$. Since   $\Omega(\tau)[\dbt]$ is the specialization order  of the topological space $(X,\tau[\dbt])$, it follows that $(X,\tau[\dbt])$ is $R_1$. Likewise, $(X,\tau[\dbf])$ is $R_1$. This verifies the necessity. Sufficiency is verified in a similar way. \end{proof}
		
		Thus, a $\bbB$-topological space is Hausdorff if and only if it is join $T_0$ and componentwise $R_1$.  In particular, for each topological space $X$, the  space $\omega(X)$ is Hausdorff if and only if $X$ is Hausdorff. 
		\begin{cor}  A $\bbB$-topological space $(X,\tau)$ is   $R_1$   if and only if  $\Omega(\tau)\wedge\Omega(\tau)^{\rm op}$ is a closed set of the product space $(X,\tau)\times(X,\tau)$. \end{cor}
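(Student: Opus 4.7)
The plan is to reduce both sides of the asserted equivalence to statements about the two component topologies, and then invoke the ordinary-space result already established in Lemma \ref{R1=closed}, namely that $R_1$ of a topological space is equivalent to $\leq\,\cap\leq^{\rm op}$ being closed in the product.

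First I would unpack the two candidate closed sets via the level-set criterion (iv) for closed sets in a $\bbB$-topology, together with the fact that $\iota_{\dbt}$ and $\iota_{\dbf}$ preserve products (they admit right, and also left, adjoints, as noted after the Lowen functors are introduced). Thus for any $\bbB$-valued subset $\mu$ of $X\times X$, closedness of $\mu$ in $(X,\tau)\times(X,\tau)$ is equivalent to closedness of $\mu[\dbt]$ in $(X,\tau[\dbt])\times(X,\tau[\dbt])$ together with closedness of $\mu[\dbf]$ in $(X,\tau[\dbf])\times(X,\tau[\dbf])$.

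Apply this first with $\mu=\Omega(\tau)$. By Proposition \ref{specialization order via component}(i), $\Omega(\tau)[\dbt]$ and $\Omega(\tau)[\dbf]$ are precisely $\leq_{\dbt}$ and $\leq_{\dbf}$, so $R_1$ of $(X,\tau)$ (as in Definition \ref{def of Hausdorff}) is equivalent to $\leq_{\dbt}$ and $\leq_{\dbf}$ being closed in the respective product spaces, recovering the componentwise characterization of Proposition \ref{Prehaus is pairwise}. Next apply it with $\mu=\Omega(\tau)\wedge\Omega(\tau)^{\rm op}$. The small Boolean computation
\[
(\Omega(\tau)\wedge\Omega(\tau)^{\rm op})[b]=\Omega(\tau)[b]\cap\Omega(\tau)^{\rm op}[b]=\mathord\leq_b\cap\mathord\leq_b^{\rm op}
\qquad(b\in\{\dbt,\dbf\})
\]
is valid because in $\bbB$ the principal filter $\mathord{\uparrow}b$ is closed under binary meets (the only elements with $a\wedge a'\geq b$ are those with both $a,a'\geq b$). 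Hence closedness of $\Omega(\tau)\wedge\Omega(\tau)^{\rm op}$ is equivalent to $\leq_{\dbt}\cap\leq_{\dbt}^{\rm op}$ being closed in $(X,\tau[\dbt])\times(X,\tau[\dbt])$ and $\leq_{\dbf}\cap\leq_{\dbf}^{\rm op}$ being closed in $(X,\tau[\dbf])\times(X,\tau[\dbf])$.

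Finally I invoke the equivalence $(1)\Leftrightarrow(2)\Leftrightarrow(3)$ of Lemma \ref{R1=closed} in each of the two component topologies: both $\leq_b$ and $\leq_b\cap\leq_b^{\rm op}$ being closed are equivalent to $(X,\tau[b])$ being $R_1$. Combining this with the two componentwise reductions above yields that closedness of $\Omega(\tau)$ and closedness of $\Omega(\tau)\wedge\Omega(\tau)^{\rm op}$ are both equivalent to componentwise $R_1$, and hence to each other. There is no genuine obstacle; the only point requiring a sentence of justification is the distribution of level sets over meets in $\bbB$, which is where the Boolean nature of the value lattice is used.
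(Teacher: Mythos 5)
Your argument is correct and follows essentially the same route as the paper: reduce to the component topologies via Proposition \ref{specialization order via component}(i) and the componentwise characterization of $R_1$ (Proposition \ref{Prehaus is pairwise}), then apply the equivalence $(2)\Leftrightarrow(3)$ of Lemma \ref{R1=closed} in each component. You merely make explicit two details the paper leaves implicit --- that $\iota_\dbt,\iota_\dbf$ preserve products and that level sets distribute over binary meets (which in fact holds in any meet-semilattice, not just because of the Boolean structure of $\bbB$).
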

		
		\begin{proof}  By Proposition \ref{specialization order via component}\thinspace(i), $\Omega(\tau)[\dbt]$ and $\Omega(\tau)[\dbf]$ are  the specialization order of the topological spaces $(X,\tau[\dbt])$ and $(X,\tau[\dbf])$, respectively. Then, the conclusion follows from Proposition \ref{Prehaus is pairwise} and   Lemma \ref{R1=closed} immediately.  \end{proof}
		
		\begin{cor}  A $\bbB$-topological space is $R_1$ if and only if its $T_0$-reflection is Hausdorff. \end{cor}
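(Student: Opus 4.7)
The plan is to reduce the statement to the topological fact noted just after Lemma~\ref{R1=closed}: a topological space is $R_1$ iff its $T_0$-reflection is Hausdorff. Let $(Y, \sigma)$ denote the $T_0$-reflection of $(X, \tau)$, with quotient map $q\colon X\lra Y$. Since $(Y, \sigma)$ is $T_0$ by construction, Definition~\ref{def of Hausdorff} tells us $(Y,\sigma)$ is Hausdorff if and only if it is $R_1$. Thus the corollary reduces to the claim that $(X, \tau)$ is $R_1$ if and only if $(Y, \sigma)$ is $R_1$.

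Applying Proposition~\ref{Prehaus is pairwise} to both sides reduces this further to: for each $b \in \{\dbt, \dbf\}$, the topological space $(X, \tau[b])$ is $R_1$ if and only if $(Y, \sigma[b])$ is $R_1$. By the topological characterization above, it then suffices to show that $(X, \tau[b])$ and $(Y, \sigma[b])$ have homeomorphic $T_0$-reflections. Let $\sim_\dbt$ and $\sim_\dbf$ be the Kolmogorov equivalence relations of $(X,\tau[\dbt])$ and $(X,\tau[\dbf])$; as observed before Proposition~\ref{T_1}, the equivalence $\sim$ defining $Y$ equals $\sim_\dbt \cap \sim_\dbf$, so $\sim$ refines $\sim_b$. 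Hence the canonical surjection $X \lra X/\sim_b$ factors through $q$, and $\sim_b$ descends to an equivalence $\overline{\sim_b}$ on $Y$. A direct check, using that each $\lam[b]$ for $\lam\in\tau$ is $\sim$-invariant and that the open sets of $(Y,\sigma[b])$ are precisely $\{\lam[b]/\sim : \lam\in\tau\}$, identifies $\overline{\sim_b}$ with the Kolmogorov equivalence of $(Y,\sigma[b])$ and matches the two quotient topologies on $X/\sim_b$.

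The main obstacle is precisely this last identification of iterated quotients as one and the same topological space---a piece of bookkeeping that needs care but no new ideas. Once it is in hand, the chain of equivalences---$(X,\tau)$ is $R_1$ iff each $(X,\tau[b])$ is $R_1$ iff the $T_0$-reflection of each $(X,\tau[b])$ is Hausdorff iff each $(Y,\sigma[b])$ is $R_1$ iff $(Y,\sigma)$ is $R_1$ iff $(Y,\sigma)$ is Hausdorff---closes out the proof.
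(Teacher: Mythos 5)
Your proof is correct and follows essentially the same route as the paper: the paper's one-line proof rests on the fact that a quotient of an $R_1$ topological space by an equivalence contained in the Kolmogorov equivalence is again $R_1$, combined (implicitly) with Proposition~\ref{Prehaus is pairwise} and the observation that the components of the $T_0$-reflection are exactly the quotients of $(X,\tau[\dbt])$ and $(X,\tau[\dbf])$ by $\sim$. Your version merely packages the same quotient bookkeeping as ``the components of $X$ and of its $T_0$-reflection have homeomorphic $T_0$-reflections,'' which has the minor advantage of delivering both directions of the biconditional at once.
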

		
		\begin{proof}This follows from the fact that for each topological space $X$ that is $R_1$, if $\sim$ is an equivalence relation   contained in the Kolmogorov equivalence relation,  i.e., $x\sim y$ implies $\overline{\{x\}}=\overline{\{y\}}$, 
			then the quotient space $X/\!\sim$ is $R_1$. \end{proof} 
		
		\begin{cor}  Subspaces and products of $R_1$ $\bbB$-topological spaces are $R_1$.
		\end{cor}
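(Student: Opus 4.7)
The plan is to reduce the statement to the corresponding classical fact for topological spaces (already noted earlier in the excerpt) by invoking Proposition \ref{Prehaus is pairwise}, which characterizes $R_1$ in $\Btop$ as componentwise $R_1$. Once this reduction is in place, the proof becomes essentially bookkeeping about how subspaces and products in $\Btop$ interact with the component topologies $\tau[\dbt]$ and $\tau[\dbf]$.

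First I would handle subspaces. Given a $\bbB$-topological space $(X,\tau)$ that is $R_1$ and a subset $A\subseteq X$, equip $A$ with the subspace $\bbB$-topology $\tau|_A$. The key observation is that $\tau|_A[\dbt]=\tau[\dbt]|_A$ and $\tau|_A[\dbf]=\tau[\dbf]|_A$, since for $\lam\in\tau$ one has $(\lam|_A)[b]=\lam[b]\cap A$ for each $b\in\bbB$. Since $(X,\tau[\dbt])$ and $(X,\tau[\dbf])$ are $R_1$ by Proposition \ref{Prehaus is pairwise}, and subspaces of $R_1$ topological spaces are $R_1$, the two component topologies of $(A,\tau|_A)$ are $R_1$. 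Proposition \ref{Prehaus is pairwise} then yields that $(A,\tau|_A)$ is $R_1$.

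For products, I would use that the forgetful functors $\iota_\dbt,\iota_\dbf\colon\Btop\lra{\bf Top}$ are right adjoints (this is stated right after the definition of these functors) and hence preserve products. Thus if $(X_i,\tau_i)_{i\in I}$ is a family of $R_1$ spaces in $\Btop$ and $(X,\tau)=\prod_i(X_i,\tau_i)$ is their product, then $(X,\tau[\dbt])=\prod_i(X_i,\tau_i[\dbt])$ and $(X,\tau[\dbf])=\prod_i(X_i,\tau_i[\dbf])$ as topological spaces. Each factor is $R_1$ by Proposition \ref{Prehaus is pairwise}, and products of $R_1$ topological spaces are $R_1$, so both components of $(X,\tau)$ are $R_1$. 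Applying Proposition \ref{Prehaus is pairwise} one more time gives that the product is $R_1$ in $\Btop$.

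There is no real obstacle here; the substance of the result is entirely carried by Proposition \ref{Prehaus is pairwise} together with the classical subspace/product preservation for topological $R_1$. The only point worth checking carefully is that products and subspaces in $\Btop$ really do decompose componentwise, which is precisely what the adjointness of $\iota_\dbt,\iota_\dbf$ (and the pointwise description of the subspace $\bbB$-topology) delivers.
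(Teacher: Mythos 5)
Your proof is correct and follows exactly the route the paper intends: the corollary is stated without proof immediately after Proposition \ref{Prehaus is pairwise} precisely because it reduces to the componentwise characterization of $R_1$ together with the classical fact (recorded after Lemma \ref{R1=closed}) that subspaces and products of $R_1$ topological spaces are $R_1$. Your verification that subspaces and products in $\Btop$ decompose componentwise is the right (and only) detail to check.
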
 
		
		\begin{prop}Every $R_1$ $\bbB$-topological space is   $R_0$; every Hausdorff $\bbB$-topological space  is $T_1$. \end{prop}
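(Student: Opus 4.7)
The plan is to reduce the statement to the well-known implication "$R_1 \Rightarrow R_0$" for ordinary topological spaces by invoking the componentwise characterizations already established. By Proposition \ref{Prehaus is pairwise}, a $\bbB$-topological space $(X,\tau)$ is $R_1$ if and only if both $(X,\tau[\dbt])$ and $(X,\tau[\dbf])$ are $R_1$; and by Proposition \ref{R0 is pairwise}, $(X,\tau)$ is $R_0$ if and only if both $(X,\tau[\dbt])$ and $(X,\tau[\dbf])$ are $R_0$. Hence it suffices to verify that any topological space satisfying $R_1$ satisfies $R_0$.

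For the classical step, I would argue directly on the specialization order, combining Lemma \ref{R0=symmetric} and Lemma \ref{R1=closed}: if $X$ is $R_1$ and $x \not\leq y$, then in particular $y \notin \overline{\{x\}}$ while $y \in \overline{\{y\}}$, so $\overline{\{x\}} \neq \overline{\{y\}}$; hence $x$ and $y$ admit disjoint open neighborhoods $U$ and $V$. The set $U$ is then an open neighborhood of $x$ missing $y$, which gives $y \not\leq x$. Thus the specialization order of $X$ is symmetric, and Lemma \ref{R0=symmetric} yields $R_0$. Applying this componentwise to $(X,\tau[\dbt])$ and $(X,\tau[\dbf])$ finishes the first assertion.

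For the second assertion, Hausdorff is by Definition \ref{def of Hausdorff} the conjunction of $T_0$ and $R_1$. The first assertion upgrades $R_1$ to $R_0$, and by definition $T_0$ together with $R_0$ is exactly $T_1$.

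There is no real obstacle here: all the work has already been done in setting up the componentwise characterizations, and the classical implication $R_1 \Rightarrow R_0$ is a two-line argument via symmetry of the specialization order. The only point one has to be slightly careful about is not to attempt a direct $\bbB$-valued argument showing symmetry of $\Omega(\tau)$ from closedness in $(X,\tau)\times(X,\tau)$, which would require unpacking the product $\bbB$-topology; routing through the components is markedly cleaner.
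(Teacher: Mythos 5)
Your proposal is correct and follows essentially the same route as the paper: the paper's proof is exactly the reduction via Proposition \ref{Prehaus is pairwise} to componentwise $R_1$, the classical implication $R_1\Rightarrow R_0$ for topological spaces, and then Proposition \ref{R0 is pairwise}. The only difference is that you spell out the classical step (which the paper takes as known) via Lemmas \ref{R0=symmetric} and \ref{R1=closed}; that is a harmless elaboration, modulo a minor slip in the direction of the specialization order (with the paper's convention, $x\not\leq y$ means $x\notin\overline{\{y\}}$, and it is the neighborhood $V$ of $y$ missing $x$ that yields $y\not\leq x$).
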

		
		\begin{proof}
			If a $\bbB$-topological space  is $R_1$, then by Proposition \ref{Prehaus is pairwise} it is componentwise $R_1$, hence componentwise $R_0$, therefore  $R_0$ by Proposition \ref{R0 is pairwise}. \end{proof}
		
		\begin{prop} The full subcategory of $\Btop$ consisting of Hausdorff spaces is reflective. \end{prop}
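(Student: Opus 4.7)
My plan is to apply the General Adjoint Functor Theorem to the inclusion of Hausdorff $\bbB$-topological spaces into $\Btop$: I would verify that this subcategory is closed under limits in $\Btop$ and satisfies the solution-set condition.

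First I would observe that limits in $\Btop$ are built as subspaces of products (with the initial $\bbB$-topology), so it suffices to check that Hausdorff $\bbB$-topological spaces are closed under products and under subspaces. Since Hausdorff is defined as $T_0$ plus $R_1$ (Definition \ref{def of Hausdorff}), this is already packaged into two facts established earlier in the excerpt: the corollary after Proposition \ref{T-0} says that subspaces and products of $T_0$ spaces are $T_0$, and the corollary preceding the present proposition says that subspaces and products of $R_1$ spaces are $R_1$. Combining these, the full subcategory of Hausdorff $\bbB$-topological spaces is closed under all limits in $\Btop$.

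Next I would address the solution-set condition. Given $(X,\tau)\in\Btop$, any continuous map $f\colon(X,\tau)\lra(H,\sigma)$ with $(H,\sigma)$ Hausdorff factors through its image $f(X)$ equipped with the subspace $\bbB$-topology; by the subspace stability just noted, this image is again Hausdorff, and of cardinality at most $|X|$. Since on a given set there are only set-many $\bbB$-topologies, I obtain a small representative family $\{f_i\colon(X,\tau)\lra(H_i,\sigma_i)\}_{i\in I}$ of morphisms into Hausdorff spaces. The Hausdorff reflection of $(X,\tau)$ can then be realized concretely as the image of the induced map $\langle f_i\rangle\colon(X,\tau)\lra\prod_{i\in I}(H_i,\sigma_i)$, endowed with the subspace $\bbB$-topology; this image is Hausdorff (as a subspace of a product of Hausdorff spaces) and, by construction, carries the required universal property.

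The main point requiring care — though not a genuine obstacle — is the bookkeeping for the solution-set condition: one must confirm that restricting to Hausdorff spaces of underlying cardinality at most $|X|$ yields a set rather than a proper class. Since all the substantive topological content has already been packaged into the $T_0$ and $R_1$ stability corollaries, reflectivity follows as a formal consequence.
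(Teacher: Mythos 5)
Your argument is correct, but it takes a genuinely different route from the paper. The paper's proof is a short explicit two-step construction that exploits the decomposition Hausdorff $=$ $T_0$ $+$ $R_1$ and the already-established reflectivity of $T_0$ spaces: for $(X,\tau)$ one forms the initial $\bbB$-topology $\hat{\tau}$ on $X$ determined by \emph{all} continuous maps into Hausdorff spaces (this is $R_1$, since $R_1$ is stable under initial sources), and then takes the $T_0$-reflection of $(X,\hat{\tau})$, which is Hausdorff by the corollary stating that a space is $R_1$ iff its $T_0$-reflection is Hausdorff. Your proof instead runs the generic epireflective-hull argument: closure under products and subspaces (which you correctly extract from the two stability corollaries for $T_0$ and $R_1$), a solution set obtained by factoring any map through its image and bounding cardinalities, and the reflection realized as the image of the canonical map into the product over the solution set. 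Both are complete; the only point you should make explicit is the uniqueness half of the universal property, which follows because the reflection map onto that image is surjective, hence an epimorphism in $\Btop$. What each buys: your argument is purely formal and would apply verbatim to any full subcategory of $\Btop$ closed under products and subspaces, with the size bookkeeping handled honestly; the paper's construction keeps the same underlying set (up to the $T_0$-quotient) and gives a concrete description of the reflection and of its topology, at the cost of quietly forming an initial structure with respect to a proper class of maps (harmless, since the resulting topology is still a subset of $\bbB^X$).
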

		
		\begin{proof}
			For each $\bbB$-topological space $(X,\tau)$, let $\hat{\tau}$ be the initial $\bbB$-topology on $X$ determined by the continuous maps $f_i\colon(X,\tau)\lra (Y_i,\tau_i)$  with $(Y_i,\tau_i)$  Hausdorff. Then,   $(X,\hat{\tau})$ is an $R_1$ space and its $T_0$-reflection is the Hausdorff reflection of $(X,\tau)$. \end{proof}
		
		The following example shows, out of expectation to some extent, that Hausdorff is not preserved under refinement of $\bbB$-topologies. The reason for this is that $R_1$ is not preserved under refinement of topology. For topological spaces, $R_1$ is preserved under refinement in the realm of $T_0$ spaces. But, for $\bbB$-topological spaces, it need not be preserved even in the realm of $T_0$ spaces. Thus, Hausdorff spaces  might behave differently in the $\bbB$-valued setting.  
		
		\begin{exmp}\label{eg3} This example shows that, unlike the situation for topological spaces,  Hausdorff is not preserved under refinement of $\bbB$-topologies.
			Let $X$ be an infinite set. Let $\CT_1$ be the discrete topology,  $\CT_2$ be the indiscrete topology,  and $\CT_3$ be the co-finite  topology on $X$.   \begin{itemize}\setlength{\itemsep}{0pt} \item The specialization order of $(X,\CT_1)$ is the identity relation, so   $(X,\CT_1)$ is   $R_1$. \item The specialization order of $(X,\CT_2)$ is the trivial order (i.e., $x\leq y$ for all $x,y\in X$),  so   $(X,\CT_2)$ is $R_1$. \item Since $X$ is infinite, for any nonempty open sets $U,V$ of $(X,\CT_3)$, the set $U\times V$  meets the diagonal $\{(x,x): x\in X\}$, thus the diagonal $\{(x,x): x\in X\}$, which is specialization order of $(X,\CT_3)$, is not closed in the product of $(X,\CT_3)$ with itself. So $(X,\CT_3)$ is not $R_1$. \end{itemize} 
			The bitopological space $(X,\CT_1,\CT_2)$ is  join $T_0$ and componentwise $R_1$, so  it is   Hausdorff. The bitopological space $(X,\CT_1,\CT_3)$ is not componentwise $R_1$, it is not Hausdorff, though it is finer than $(X,\CT_1,\CT_2)$.
		\end{exmp}

		A bitopological space  $(X,\tau[\dbt], \tau[\dbf])$ is \emph{pairwise Hausdorff} \cite[Definition 2.5]{Kelly} if, for each pair of distinct points $x$ and $y$, there exist a $\tau[\dbt]$-neighborhood $U$ of $x$ and a $\tau[\dbf]$-neighborhood $V$ of $y$ such that  $U\cap V=\emptyset$.  
		In \cite{Salbany,BBGK} the axiom of pairwise Hausdorff is weakened to that  for each pair of distinct points $x$ and $y$, there exist disjoint  $U\in \tau[\dbt]$ and  $V\in\tau[\dbf]$  such that  $x\in U, y\in V$ or $x\in V,y\in U$. 
		
		It is clear that a bitopological space  $(X,\tau[\dbt], \tau[\dbf])$ is  pairwise Hausdorff in the sense of Kelly \cite{Kelly} if and only if  the diagonal $\{(x,x):x\in X\}$ is a closed set of the product space $(X,\tau[\dbt])\times(X, \tau[\dbf])$.
		
		Another generalization of the Hausdorff separation axiom to bitopological spaces is that of order-separatedness in \cite{JM2006,JM2008}. A bitopological space $(X,\tau[\dbt], \tau[\dbf])$ is \emph{order-separated}  \cite[Definition 3.7]{JM2008} if  \begin{enumerate}[label=\rm(\roman*)] \setlength{\itemsep}{0pt} \item the binary relation $\leq\thinspace \coloneqq
			\thinspace\leq_\dbt\cap \geq_\dbf$   is a partial order; and \item  $x\not\leq  y$ implies that there exist a $\tau[\dbt]$-neighborhood $U$ of $x$ and a $\tau[\dbf]$-neighborhood $V$ of $y$ such that  $U\cap V=\emptyset$.  \end{enumerate}    
		
		\begin{exmp}\label{Hasdorff of XY} Let $X,Y$ be  topological spaces. Consider the $\bbB$-topological space $X.Y= (X\times Y,\tau)$ in Example \ref{X.Y}. Then \begin{enumerate}[label=\rm(\roman*)] \setlength{\itemsep}{0pt} \item $X.Y$ is $R_1$ (Hausdorff, resp.)  if and only if so are the topological spaces $X$ and $Y$, respectively. \item $X.Y$ is pairwise Hausdorff   if and only if    $X$ and $Y$ are singleton spaces, because each nonempty open set of $(X\times Y,\tau[\dbt])$ intersects each nonempty open set of $(X\times Y,\tau[\dbf])$. \item $X.Y$ is order-separated  if and only if   $X$ and $Y$ are singleton spaces. \end{enumerate}  \end{exmp}    
		
		Example \ref{Hasdorff of XY} shows that  our axiom of Hausdorff does not imply   pairwise Hausdorff, nor order-separated.
		The space $(\mathbb{R},\tau)$ in Example \ref{eg2} is order-separated but not Hausdorff; the space $(X,\CT_1,\CT_3)$ in Example \ref{eg3}  is  pairwise Hausdorff, but not Hausdorff.  
		So, Hausdorff is neither comparable with pairwise Hausdorff nor with order-separated.

		\subsection{Regularity  and normality} \label{regular and normal}
		
		\begin{defn} Suppose $(X,\tau)$ is a $\bbB$-topological space. We say that  \begin{enumerate}[label=\rm(\roman*)] \setlength{\itemsep}{0pt} \item  $(X,\tau)$ is  regular  if for each $\lambda\in\tau$, $\lambda=\bigvee \{\mu\in\tau:\overline{\mu}\leq\lambda \}.$ \item $(X,\tau)$ is $T_3$  if it is  $T_0$ and regular. \end{enumerate}  
		\end{defn}
		The above definition is a special case of regularity and $T_3$ in \cite[Section 6]{HR80} for fuzzy topological spaces, it is motivated by the fact that in a regular topological space, every neighborhood of a point contains a closed neighborhood of the given point, see e.g. \cite[Theorem 14.3]{Willard}.
		
		\begin{prop}\label{regular is pairwise}
			A $\bbB$-topological space  $(X,\tau)$ is regular if and only if it is componentwise regular. In particular, for each topological space $X$, the $\bbB$-topological  space $\omega(X)$ is regular if and only if  $X$, as a topological space, is regular.  \end{prop}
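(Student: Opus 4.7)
The plan is to translate both directions through the bijection $\lambda\leftrightarrow(\lambda[\dbt],\lambda[\dbf])$ between members of $\tau$ and compatible pairs of classical opens, leveraging the componentwise closure identity $\overline{\lambda}[\dbt]=\cl_\dbt\lambda[\dbt]$ and $\overline{\lambda}[\dbf]=\cl_\dbf\lambda[\dbf]$ recorded in Section \ref{first section}.

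For the forward direction, suppose $(X,\tau)$ is regular and fix $U\in\tau[\dbt]$ and $x\in U$. Pick $\lambda\in\tau$ with $\lambda[\dbt]=U$, so $\lambda(x)\geq\dbt$. Because joins in $\bbB^X$ are computed pointwise and $\bbB$ is a finite Boolean algebra (a join of elements in $\{0,\dbf\}$ stays in $\{0,\dbf\}$), the equation $\lambda=\bigvee\{\mu\in\tau:\overline\mu\leq\lambda\}$ forces some $\mu$ in this family with $\mu(x)\geq\dbt$. Then $V:=\mu[\dbt]\in\tau[\dbt]$, $x\in V$, and $\cl_\dbt V=\overline\mu[\dbt]\subseteq\lambda[\dbt]=U$, witnessing regularity of $(X,\tau[\dbt])$. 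The argument for $(X,\tau[\dbf])$ is symmetric.

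For the backward direction, assume both components are regular and fix $\lambda\in\tau$. A short case analysis on the value $\lambda(y)\in\{0,\dbt,\dbf,1\}$ gives the decomposition $\lambda=\dbt_{\lambda[\dbt]}\vee\dbf_{\lambda[\dbf]}$. Using properties (iii) and (iv) of the closure operator, one computes $\overline{\dbt_V}=\overline{\dbt\wedge 1_V}=\dbt\wedge\overline{1_V}=\dbt_{\cl_\dbt V}$ for any $V\in\tau[\dbt]$, and similarly $\overline{\dbf_W}=\dbf_{\cl_\dbf W}$. By regularity of $(X,\tau[\dbt])$, $\lambda[\dbt]=\bigcup\{V\in\tau[\dbt]:\cl_\dbt V\subseteq\lambda[\dbt]\}$, hence
\[\dbt_{\lambda[\dbt]}=\bigvee\{\dbt_V:V\in\tau[\dbt],\ \cl_\dbt V\subseteq\lambda[\dbt]\},\]
where each $\dbt_V$ lies in $\tau$ (its cuts are $V$ and $\emptyset$) and satisfies $\overline{\dbt_V}=\dbt_{\cl_\dbt V}\leq\dbt_{\lambda[\dbt]}\leq\lambda$. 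Handling $\dbf_{\lambda[\dbf]}$ analogously and joining the two presentations exhibits $\lambda$ as $\bigvee\{\mu\in\tau:\overline\mu\leq\lambda\}$. The ``in particular'' clause is immediate from the main equivalence, since $\omega(X)$ corresponds to the bitopological space $(X,\CT,\CT)$.

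The only delicate points are the decomposition $\lambda=\dbt_{\lambda[\dbt]}\vee\dbf_{\lambda[\dbf]}$ and the identity $\overline{\dbt_V}=\dbt_{\cl_\dbt V}$, both of which require pointwise arithmetic in the four-element Boolean algebra $\bbB$; everything else is a straightforward dictionary translation between the $\bbB$-valued and classical formulations of regularity, so I do not expect a genuine obstacle.
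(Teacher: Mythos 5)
Your proof is correct and follows essentially the same route as the paper: for necessity, extract from the pointwise join $\lambda=\bigvee\{\mu:\overline\mu\leq\lambda\}$ a single $\mu$ with $\mu(x)\geq\dbt$ and pass to its $\dbt$-cut; for sufficiency, write $\lambda$ as a join of open sets of the form $\dbt_U$ and $\dbf_V$ whose closures $\dbt_{\cl_\dbt U}$, $\dbf_{\cl_\dbf V}$ lie below $\lambda$, which is exactly the family $\Lambda$ the paper uses. The extra details you supply (the decomposition $\lambda=\dbt_{\lambda[\dbt]}\vee\dbf_{\lambda[\dbf]}$ and the identity $\overline{\dbt_V}=\dbt_{\cl_\dbt V}$ via properties (iii) and (iv)) are correct and merely make explicit what the paper leaves to the reader.
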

		
		\begin{proof}
			We prove the necessity first.  Let $(X,\tau)$ be a regular $\bbB$-topological space.  Assume that $U$ is an open set of $(X,\tau[\dbt])$ and that $x\in U$. By definition of $\tau[\dbt]$, there is some $\lam\in\tau$ such that $U=\lam[\dbt]$. Since $(X,\tau)$ is regular, there is some $\mu\in\tau$ such that $\mu(x)\geq\dbt$ and $\overline{\mu}\leq\lam$. Since $\mu[\dbt]$ is an open set and $\overline{\mu}[\dbt]$ is a closed set of $(X,\tau[\dbt])$, it follows that $\mu[\dbt]$ is an open neighborhood of $x$ whose closure is contained in $U$, hence $(X,\tau[\dbt])$  is regular.  Likewise,  $(X,\tau[\dbf])$ is regular.
			
			For sufficiency, suppose that $(X,\tau[\dbt])$ and $(X,\tau[\dbf])$ are both regular. Let $\lambda\in\tau$.  Since   $\lambda[\dbt]$ is  open in $(X, \tau[\dbt])$, there is a family $\mathcal{U}$ of open sets of $(X, \tau[\dbt])$ such that $\mathcal{U}$ covers $\lam[\dbt]$ and the closure of each of its element in $(X, \tau[\dbt])$ is contained in $\lam[\dbt]$. Similarly, there is a family $\mathcal{V}$ of open sets of $(X, \tau[\dbf])$ such that $\mathcal{V}$ covers $\lam[\dbf]$ and the closure of each of its element in $(X, \tau[\dbf])$ is contained in $\lam[\dbf]$.   Let $$\Lambda=\{\dbt_U: U\in\mathcal{U}\}\cup\{\dbf_V:V\in\mathcal{V}\}.$$ Then $\Lambda$ is a family of open sets of $(X,\tau)$ such that $\lam$ is the join of $\Lambda$ and that the closure of each element of $\Lambda$   is contained in $\lambda$. This shows that $(X,\tau)$ is regular.
		\end{proof}

		\begin{cor}\label{regular} \begin{enumerate}[label=\rm(\roman*)] \setlength{\itemsep}{0pt} \item A $\bbB$-topological space   is $T_3$ if and only if it is  join $T_0$ and  componentwise regular.  \item  Every   $\bbB$-topological space that is regular is $R_1$; every $\bbB$-topological space that is $T_3$ is Hausdorff.  \end{enumerate}  
		\end{cor}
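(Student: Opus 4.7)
The plan is to reduce both assertions of the corollary to results already established in this section, so that no new fuzzy-topological argument is needed.

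For part (i), I unpack the definition: a $\bbB$-topological space is $T_3$ precisely when it is both $T_0$ and regular. Proposition \ref{T-0} identifies $T_0$ with join $T_0$, while Proposition \ref{regular is pairwise} identifies regularity with componentwise regularity. Combining these two equivalences yields exactly the claimed characterization, so the argument reduces to a single sentence citing these two propositions.

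For part (ii), the key input is the classical implication that every regular topological space is $R_1$, which was recorded in the paragraph following Lemma \ref{R1=closed}. Given a regular $\bbB$-topological space $(X,\tau)$, Proposition \ref{regular is pairwise} tells me that both $(X,\tau[\dbt])$ and $(X,\tau[\dbf])$ are regular topological spaces; the classical fact then says each of them is $R_1$; and Proposition \ref{Prehaus is pairwise} lifts componentwise $R_1$ back to $R_1$ of $(X,\tau)$. For the second statement, I chain definitions: if $(X,\tau)$ is $T_3$, then it is $T_0$ and regular; by what was just proved, it is in fact $T_0$ and $R_1$, which by Definition \ref{def of Hausdorff} is exactly Hausdorff.

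There is no genuine obstacle — the content of the corollary is entirely packaging. The only step requiring a moment of attention is remembering that "regular implies $R_1$" for ordinary topological spaces is the concrete fact being passed through the componentwise characterizations; without it, one would have to argue directly that $\Omega(\tau)$ is closed in $(X,\tau)\times(X,\tau)$, which would amount to reproving the classical result.
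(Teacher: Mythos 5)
Your proposal is correct and is essentially the argument the paper intends: the corollary is left unproved precisely because it follows by combining Proposition \ref{T-0} ($T_0$ = join $T_0$), Proposition \ref{regular is pairwise} (regular = componentwise regular), Proposition \ref{Prehaus is pairwise} ($R_1$ = componentwise $R_1$), and the classical fact ``every regular topological space is $R_1$'' recorded after Lemma \ref{R1=closed}, exactly as you do. No gaps.
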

		
		\begin{defn}Suppose  $(X,\tau)$ is a $\bbB$-topological space. We say that \begin{enumerate}[label=\rm(\roman*)] \setlength{\itemsep}{0pt} \item (\cite{Hutton75}) $(X,\tau)$ is  normal if for each open set $\lambda $ and each closed set $\mu$ of $(X,\tau)$ with $\mu\leq\lam$, there is an open set $\nu$ such that $\mu\leq\nu\leq\overline{\nu}\leq\lam$. \item $(X,\tau)$ is $T_4$  if it is  $T_1$ and normal. \end{enumerate}  
		\end{defn}
		
		\begin{prop}\label{normal=binormal} \begin{enumerate}[label=\rm(\roman*)] \setlength{\itemsep}{0pt} \item A $\bbB$-topological space $(X,\tau)$ is normal if and only if it is componentwise normal.  \item A $\bbB$-topological space $(X,\tau)$ is $T_4$ if and only if it is  join $T_0$, componentwise $R_0$ and componentwise normal.   \end{enumerate} 
		\end{prop}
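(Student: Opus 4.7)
The plan is to reduce both parts to ordinary topological normality by exploiting the slice correspondence from Section \ref{first section}: a $\bbB$-valued function $\sigma$ is open (resp.\ closed) in $(X,\tau)$ if and only if $\sigma[\dbt]\in\tau[\dbt]$ and $\sigma[\dbf]\in\tau[\dbf]$ are open (resp.\ closed), together with the identity $\overline{\sigma}[b]=\cl_b\sigma[b]$ for $b\in\{\dbt,\dbf\}$. These facts allow us to translate $\bbB$-valued containments $\mu\leq\nu\leq\overline{\nu}\leq\lam$ into two independent pairs of set-theoretic containments at the $\dbt$- and $\dbf$-levels, and back.

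For the sufficiency direction of (i), suppose $(X,\tau[\dbt])$ and $(X,\tau[\dbf])$ are both normal, and let $\mu\leq\lam$ with $\mu$ closed and $\lam$ open in $(X,\tau)$. Applying normality of $(X,\tau[\dbt])$ to the closed set $\mu[\dbt]$ and the open set $\lam[\dbt]$ produces $V_\dbt\in\tau[\dbt]$ with $\mu[\dbt]\subseteq V_\dbt\subseteq\cl_\dbt V_\dbt\subseteq\lam[\dbt]$; symmetrically we obtain $V_\dbf\in\tau[\dbf]$. Since $\bbB\cong 2\times 2$, we may assemble a single $\nu\in\bbB^X$ with $\nu[\dbt]=V_\dbt$ and $\nu[\dbf]=V_\dbf$ by setting $\nu(x)=1,\dbt,\dbf,0$ according to which of $V_\dbt,V_\dbf$ contain $x$. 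Then $\nu$ is open, and the four slice-level inclusions combine, via the closure-slice identity, to yield $\mu\leq\nu\leq\overline{\nu}\leq\lam$, so $(X,\tau)$ is normal.

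For the necessity direction of (i), suppose $(X,\tau)$ is normal, and let $F\subseteq U$ with $F$ closed and $U$ open in $(X,\tau[\dbt])$. Take $\mu=\dbt_F$ and $\lam=\dbt_U$; a direct check of $(\neg\dbt_F)[\dbt]=X\setminus F$ and $(\neg\dbt_F)[\dbf]=X$ shows $\mu$ is closed in $(X,\tau)$, and analogously $\lam$ is open in $(X,\tau)$, with $\mu\leq\lam$. Normality of $(X,\tau)$ supplies an open $\nu$ with $\mu\leq\nu\leq\overline{\nu}\leq\lam$; taking $\dbt$-slices and using $\overline{\nu}[\dbt]=\cl_\dbt\nu[\dbt]$ gives $F\subseteq\nu[\dbt]\subseteq\cl_\dbt\nu[\dbt]\subseteq U$. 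The case of $(X,\tau[\dbf])$ is obtained symmetrically via $\dbf_F$ and $\dbf_U$.

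Part (ii) is then immediate: by definition $T_4$ unpacks as $T_1$ plus normal; Proposition \ref{R0 is pairwise}(ii) identifies $T_1$ with ``join $T_0$ and componentwise $R_0$'', and (i) identifies normal with componentwise normal. No step is deep; the only point requiring care is the gluing construction of $\nu$ in the sufficiency argument, where one must verify that the single $\bbB$-valued interpolant defined from $(V_\dbt,V_\dbf)$ really witnesses $\overline{\nu}\leq\lam$—which is exactly what the identity $\overline{\nu}[b]=\cl_b\nu[b]$ is tailored to provide.
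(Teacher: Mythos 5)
Your proof is correct and follows essentially the same route as the paper: the paper's proof of this proposition is just ``Similar to Proposition \ref{regular is pairwise}'', i.e.\ the componentwise translation via the slice identities $\overline{\nu}[b]=\cl_b\nu[b]$, with the interpolant assembled as $\nu=\dbt_{V_\dbt}\vee\dbf_{V_\dbf}$ exactly as you do, and the necessity direction tested on $\dbt_F\leq\dbt_U$ (resp.\ $\dbf_F\leq\dbf_U$).
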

		
		\begin{proof} Similar to  Proposition \ref{regular is pairwise}. \end{proof} 
		
		As an immediate corollary we obtain that for each topological space $X$, the space $\omega(X)$ is normal space if and only if $X$, as a topological space, is   normal.

		There is a Urysohn lemma for normal $\bbB$-topological spaces, which is essentially a special case of that for normal fuzzy topological spaces in Hutton \cite{Hutton75}. We formulate it in terms of bitopological spaces. We need an analog of the unit interval in the bitopological setting:  $$[0,1](\bbB)=([0,1]\times[0,1],\tau[\dbt], \tau[\dbf]),$$ where, $\tau[\dbt]=\{U\times[0,1]: U\in\CO\}$, $\tau[\dbf]=\{[0,1]\times V: V\in\CO\}$,  $\CO$ is the usual topology of $[0,1]$.  In the notation of Example \ref{X.Y},   $[0,1](\bbB)$ is  the space $[0,1].[0,1]$. 
		Now we state the Urysohn lemma, leaving the proof to the reader.
		\begin{lem}[Urysohn lemma] Suppose $(X,\tau[\dbt],\tau[\dbf])$ is a normal bitopological space. If \begin{enumerate}[label=\rm(\roman*)] \setlength{\itemsep}{0pt} \item $F_\dbt$ is a closed set of $(X,\tau[\dbt])$, $F_\dbf$ is a closed set of $(X,\tau[\dbf])$, \item $U_\dbt$ is an open set of $(X,\tau[\dbt])$, $U_\dbf$ is an open set of $(X,\tau[\dbf])$, and \item $F_\dbt\subseteq U_\dbt$, $F_\dbf\subseteq U_\dbf$,\end{enumerate} then there exists a continuous map $f\colon X\lra [0,1](\bbB)$ such that  $$\forall x\in F_\dbt,~ f(x)\in\{0\}\times[0,1];\quad \forall x\in F_\dbf,~ f(x)\in  [0,1]\times\{0\}$$ and $$\forall x\notin U_\dbt,~ f(x)\in\{1\}\times[0,1];\quad \forall x\notin U_\dbf,~ f(x)\in  [0,1]\times\{1\}.$$\end{lem}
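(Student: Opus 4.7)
The plan is to reduce the statement to the classical Urysohn lemma applied componentwise. By Proposition \ref{normal=binormal}(i), the hypothesis that $(X,\tau[\dbt],\tau[\dbf])$ is normal as a bitopological space is equivalent to both $(X,\tau[\dbt])$ and $(X,\tau[\dbf])$ being normal topological spaces. Since $F_\dbt\subseteq U_\dbt$ with $F_\dbt$ closed and $U_\dbt$ open in $(X,\tau[\dbt])$, the classical Urysohn lemma produces a continuous map $g\colon(X,\tau[\dbt])\lra[0,1]$ with $g(F_\dbt)=\{0\}$ and $g(X\setminus U_\dbt)=\{1\}$. Similarly, applied to $(X,\tau[\dbf])$ with the pair $F_\dbf\subseteq U_\dbf$, it produces $h\colon(X,\tau[\dbf])\lra[0,1]$ with $h(F_\dbf)=\{0\}$ and $h(X\setminus U_\dbf)=\{1\}$.

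Next, I define $f\colon X\lra[0,1]\times[0,1]$ by $f(x)=(g(x),h(x))$ and check that $f$ is continuous when the codomain carries the bitopology of $[0,1](\bbB)$. Recall that the $\dbt$-component of the target topology is $\{U\times[0,1]:U\in\CO\}$ and the $\dbf$-component is $\{[0,1]\times V:V\in\CO\}$. So continuity reduces to showing that $f^{-1}(U\times[0,1])=g^{-1}(U)\in\tau[\dbt]$ for every $U\in\CO$, and $f^{-1}([0,1]\times V)=h^{-1}(V)\in\tau[\dbf]$ for every $V\in\CO$, both of which follow from the continuity of $g$ and $h$ already established.

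Finally, I verify the four required containments directly from the defining properties of $g$ and $h$: if $x\in F_\dbt$ then $g(x)=0$, whence $f(x)\in\{0\}\times[0,1]$; if $x\in F_\dbf$ then $h(x)=0$, whence $f(x)\in[0,1]\times\{0\}$; if $x\notin U_\dbt$ then $g(x)=1$ and $f(x)\in\{1\}\times[0,1]$; and if $x\notin U_\dbf$ then $h(x)=1$ and $f(x)\in[0,1]\times\{1\}$.

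There is no real obstacle here — the whole argument rests on Proposition \ref{normal=binormal}, which reduces bi-normality to normality of each component, together with the observation that the bitopological structure on $[0,1](\bbB)=[0,1].[0,1]$ is by construction the one that makes a pair $(g,h)$ continuous precisely when $g$ and $h$ are continuous in the respective component topologies. The only thing to be mildly careful about is the description of open sets in the codomain; once that is in hand, the argument is a one-line reduction to the classical Urysohn lemma.
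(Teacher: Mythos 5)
Your proof is correct, but it takes a different route from the one the paper gestures at. The paper leaves the proof to the reader and indicates that the lemma should be obtained as a special case of Hutton's Urysohn lemma for normal fuzzy topological spaces \cite{Hutton75}, i.e.\ by running the interpolation construction directly with $\bbB$-valued open and closed sets $\mu\leq\nu\leq\overline{\nu}\leq\lam$. You instead decompose: Proposition \ref{normal=binormal} reduces bitopological normality to normality of $(X,\tau[\dbt])$ and $(X,\tau[\dbf])$ separately, the classical Urysohn lemma is applied once to the pair $(F_\dbt,\,X\setminus U_\dbt)$ and once to $(F_\dbf,\,X\setminus U_\dbf)$, and the two scalar functions are paired into $f=(g,h)$. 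The key observation making this work is exactly the one you isolate: the bitopology of $[0,1](\bbB)=[0,1].[0,1]$ from Example \ref{X.Y} has \emph{only} the open sets $U\times[0,1]$ and $[0,1]\times V$, so continuity of $f$ is literally equivalent to continuity of $g$ into $(X,\tau[\dbt])$ and of $h$ into $(X,\tau[\dbf])$, with no joint condition to check. Your approach is more elementary and self-contained (it needs no fuzzy-topological machinery beyond Proposition \ref{normal=binormal}); the Hutton route is what generalizes to quantale-valued spaces where no such componentwise splitting is available. One cosmetic point: phrase the conclusions of the classical lemma as $g(x)=0$ for $x\in F_\dbt$ rather than $g(F_\dbt)=\{0\}$, to avoid a vacuous-set quibble when $F_\dbt=\emptyset$.
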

		
		\begin{prop} If a $\bbB$-topological space  is $T_4$, then it is $T_3$.
		\end{prop}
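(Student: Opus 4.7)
The plan is to reduce the statement to its component-wise counterpart and then invoke the classical fact that a normal $R_0$ topological space is regular. By Proposition \ref{normal=binormal}\thinspace(ii), $T_4$ is equivalent to the conjunction of join $T_0$, componentwise $R_0$, and componentwise normal. By Corollary \ref{regular}\thinspace(i), $T_3$ is equivalent to join $T_0$ together with componentwise regular. Since join $T_0$ is shared, the whole task collapses to showing that for a single topological space $(Y,\CT)$, the combination of $R_0$ and normal implies regular.

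For this componentwise reduction I would argue as follows. Suppose $(Y,\CT)$ is normal and $R_0$, let $U \in \CT$ and $x \in U$. By the $R_0$ condition (Lemma \ref{R0=symmetric}, or equivalently Davis's original formulation), $\overline{\{x\}} \subseteq U$. Then $\overline{\{x\}}$ and $Y\setminus U$ are disjoint closed sets, so normality produces disjoint open sets $V, W$ with $\overline{\{x\}} \subseteq V$ and $Y\setminus U \subseteq W$. From $V \cap W = \emptyset$ we get $V \subseteq Y\setminus W$, and since $Y\setminus W$ is closed and contained in $U$, we conclude $x \in V \subseteq \overline{V} \subseteq U$. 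Hence $(Y,\CT)$ is regular. Applying this to both $(X,\tau[\dbt])$ and $(X,\tau[\dbf])$ gives componentwise regular, which together with join $T_0$ yields $T_3$ by Corollary \ref{regular}\thinspace(i).

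I do not expect a serious obstacle here; the heavy lifting has already been done by Propositions \ref{regular is pairwise} and \ref{normal=binormal}, which translate the definitions of regular and normal given on $\bbB$-valued opens and closed sets into purely componentwise topological conditions. The only non-mechanical step is the classical $R_0 + \mathrm{normal} \Rightarrow \mathrm{regular}$ argument sketched above, which is completely standard. It is worth noting that one cannot shortcut the proof by mimicking the classical argument directly at the $\bbB$-valued level using $\lam = 1_x$, because points must be replaced by closures of $1_x$ and characteristic-valued opens, and the combinatorics become awkward; channeling the proof through the componentwise reductions that have already been established is much cleaner.
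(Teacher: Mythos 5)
Your proposal is correct and follows essentially the same route as the paper: reduce to the componentwise statement via Propositions \ref{R0 is pairwise}, \ref{regular is pairwise} and \ref{normal=binormal}, then run the classical argument that $R_0$ plus normal implies regular by separating $\overline{\{x\}}$ from the complementary closed set. Your version merely lands on the closed-neighborhood formulation of regularity ($x\in V\subseteq\overline{V}\subseteq U$) rather than the disjoint-open-sets formulation, which is an equivalent and equally standard finish.
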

		
		\begin{proof}
			Suppose $(X,\tau)$ is  $T_4$. To see that it is $T_3$, by Proposition \ref{regular is pairwise} it suffices to show that both of the topological spaces $(X,\tau[\dbt])$ and $(X,\tau[\dbf])$ are regular. Assume that $F$ is a closed set of $(X,\tau[\dbt])$ and $x\notin F$. Since $(X,\tau[\dbt])$ is $R_0$ by Proposition \ref{R0 is pairwise}, it follows that in $(X,\tau[\dbt])$, the closure of $\{x\}$ is contained in $X\setminus F$, hence disjoint with $F$.  Since $(X,\tau[\dbt])$ is a normal space by Proposition \ref{normal=binormal}, there are disjoint open sets of $(X,\tau[\dbt])$ that contain $x$ and $F$, respectively. This shows that $(X,\tau[\dbt])$ is regular. Likewise, $(X,\tau[\dbf])$ is regular. 
		\end{proof}    
		
		\begin{defn} (\cite{Kelly}) Suppose $(X,\tau[\dbt], \tau[\dbf])$ is a bitopological space.  
			\begin{enumerate}[label=\rm(\roman*)] \setlength{\itemsep}{0pt}  
				\item If for each $\tau[\dbt]$-closed set $F$ and each $x\notin F$, there exist a $\tau[\dbt]$-open set $U$ and $\tau[\dbf]$-open set $V$ such that $x\in U$, $F\subseteq V$, and $U\cap V=\emptyset$, then we say that $\tau[\dbt]$ is regular with respect to $\tau[\dbf]$. If $\tau[\dbt]$ is regular with respect to $\tau[\dbf]$, and $\tau[\dbf]$ is regular with respect to $\tau[\dbt]$, then we say that $(X,\tau[\dbt],\tau[\dbf])$ is pairwise regular.
				
				\item If for each $\tau[\dbt]$-closed set $F$ and $\tau[\dbf]$-closed set $G$ with $F\cap G=\emptyset$,   there exist a $\tau[\dbt]$-open set $U$ and a $\tau[\dbf]$-open set $V$ such that $F\subset V$, $G\subset U$, and $U\cap V=\emptyset$, then we say that $(X,\tau[\dbt], \tau[\dbf])$ is  pairwise normal. \end{enumerate} \end{defn}
		
		\begin{exmp}Suppose $X,Y$ are  topological spaces. Consider the $\bbB$-topological space $X.Y= (X\times Y,\tau)$ in Example \ref{X.Y}. Then \begin{enumerate}[label=\rm(\roman*)] \setlength{\itemsep}{0pt} \item $X.Y$ is regular (resp. normal) if and only if so are the topological spaces $X$ and $Y$, respectively. \item $X.Y$ is pairwise regular   if and only if  both of the topological spaces $X$ and $Y$ are indiscrete, because each nonempty open set of $(X\times Y,\tau[\dbt])$ intersects each nonempty open set of $(X\times Y,\tau[\dbf])$. \item $X.Y$ is pairwise normal trivially, because each nonempty closed set of $(X\times Y,\tau[\dbt])$ intersects each nonempty closed set of $(X\times Y,\tau[\dbf])$. \end{enumerate} 
		\end{exmp}

		Regularity and  normality of bitopological spaces postulated here are different  from the pairwise regularity and the pairwise normality of Kelly \cite{Kelly}. The space $(\mathbb{R},\tau)$ in Example \ref{eg2} is pairwise regular space but not regular. For a $\bbB$-topological spaces that is pairwise normal but not normal, consider the   space $(X,\tau)$, where    $$X=\{x,y,z\}, \quad\tau[\dbt] =\{\emptyset,\{x\},X\},\quad \tau[\dbf]=\{\emptyset,\{x\},\{x,y\},\{x,z\},X\}.$$  Then   $(X,\tau)$ is  pairwise normal but not normal.  Finally, consider the $\bbB$-topological space $(X,\tau)$, where    $$X=\{x,y,z\}, \quad \tau[\dbt]= \{\emptyset,\{x,y\},\{z\},X\} ,\quad \tau[\dbf]=\{\emptyset,\{x\} ,\{y,z\} ,X\}.$$  A calculation gives  \begin{center} \begin{tabular}{   c| c| c |c}
				$\Omega(\tau)$   & \thinspace$x$\thinspace & \thinspace$y$\thinspace & \thinspace$z$\thinspace \\   \hline  
				$x$   & $1$ &$\dbt$ &$0$ \\ \hline
				$y$   & $\dbt$ & $1$ & $\dbf$ \\ \hline
				$z$   & $0$ & $\dbf$& $1$\\
		\end{tabular}\end{center} which shows that $(X,\tau)$ is $T_1$. Since $(X,\tau)$ is   componentwise normal,   it is   $T_4$, hence $T_3$ and Hausdorff.  But,  $(X,\tau)$  is neither  pairwise regular, nor pairwise normal, nor pairwise Hausdorff.

		Interrelationship between the separation axioms in this section are summarized in the diagram: 
		
		\begin{center}\begin{tikzpicture}  
				\draw[-{Implies},double distance = 1.5pt] (0,0) -- (0,0.7); 
				\draw[-{Implies},double distance = 1.5pt] (0,1.2) -- (0,1.9);
				\draw[-{Implies},double distance = 1.5pt] (0,2.4) -- (0,3.1);
				\draw[-{Implies},double distance = 1.5pt] (0,3.6) -- (0,4.3);
				
				\node at (0,-0.3) {$T_4$}; \node at (0,0.95) {$T_3$};\node at (0,2.15) {Hausdorff}; \node at (0,3.35) {$T_1$}; \node at (0,4.55) {$T_0$}; \node at (1.3,4.55) {(= join $T_0$)}; \node at (3.5,4.55) {$R_0$}; \node at (3.5,3.35) {$R_1$};\node at (3.85,2.15) {regular}; \node at (3.85,0.95) {normal}; 
				
				\draw[-{Implies},double distance = 1.5pt] (0.25,-0.25) -- (3.2,0.8); 
				\draw[-{Implies},double distance = 1.5pt] (0.25,1.0) -- (3.2,2); 
				\draw[-{Implies},double distance = 1.5pt] (0.9,2.2) -- (3.2,3.2); 
				\draw[-{Implies},double distance = 1.5pt] (0.25,3.4) -- (3.2,4.4);
				\node at (6.4,4.55) {(= componentwise $R_0$)}; 
				\node at (6.4,3.35) {(= componentwise $R_1$)};  \node at (6.85,0.95) {(= componentwise normal)};  \node at (6.8,2.15) {(= componentwise regular)}; 
				\draw[-{Implies},double distance = 1.5pt] (6.3,2.4) -- (6.3,3.1); 
				\draw[-{Implies},double distance = 1.5pt] (6.3,3.6) -- (6.3,4.3);
		\end{tikzpicture} \end{center}
		
		\section{Sobriety} \label{third section}
		
		Sobriety is an important notion in topology, as  manifested by its role in the connection between topological spaces and frames, the algebraic dual of topological spaces  \cite{Johnstone,PP2012}. In 1983, Banaschewski, Br\"{u}mmer, and Hardie \cite{BBH} introduced biframes as algebraic duals of bitopological spaces. In 2006, Jung and Moshier  \cite{JM2006,JM2008} developed an algebraic realization of bitopological spaces, resulting in the theory of of d-frames and d-sober bitopological spaces. Since bitopological spaces are a special kind of fuzzy topological spaces, $\bbB$-valued topological spaces to be precise, the theory of sobriety developed for fuzzy topological spaces, see e.g. \cite{Zhang2018,ZL95,Zhang2022}, can be transformed into a theory of sobriety for bitopological spaces. This section investigates the relationship between d-sobriety in \cite{JM2006,JM2008} and $\bbB$-sobriety in \cite{Zhang2018,ZL95}, and establishes a Hofmann-Mislove theorem for bitopological spaces.
		
		\subsection{d-sobriety}
		
		A \emph{frame} is a complete lattice $L$ such that for each element $a$ and each subset $\{b_i\}_{i\in I}$ of $L$, it holds that $$a\wedge\bv_{i\in I}b_i=\bv_{i\in I}a\wedge b_i.$$ A \emph{frame homomorphism} between frames is a map that preserves finite meets and arbitrary joins. With frames as objects and frame homomorphisms as morphisms we have a category {\bf Frm}, called the category of frames.  For the theory of  frames we refer to the monographs \cite{Johnstone,PP2012}.

		Suppose $(X,\tau)$ is a $\bbB$-topological space. The open-set lattice $\tau$ is not only a frame, it is an object in the slice category $\BFrm$. Actually, assigning to each element $b$ of $\bbB$ the constant open set $b_X$ gives a frame map $i_X\colon\bbB\lra\tau$, so $(\tau,i_X)$ is an object of $\BFrm$. The images of $\dbt,\dbf$ under $i_X$, which are also denoted by $\dbt,\dbf$ respectively, are a complemented pair of $\tau$. Then, ``a second distributive lattice that is `at 90 degrees' to the frame order
		also exists'' on $\tau$ \cite[page 31]{JM2006}. Precisely, by defining $$x\sqcap y=(x\wedge\dbf)\vee(y\wedge\dbf)\vee(x\wedge y)$$ $$x\sqcup y=(x\wedge\dbt)\vee(y\wedge\dbt)\vee(x\wedge y)$$ we obtain a distributive lattice $(\tau,\sqcap,\sqcup)$ with $\dbt$ being the top element and $\dbf$ being the bottom element, see \cite[page 751]{BK47} or \cite[Section 3.2]{JM2006}. The order $\sqsubseteq$ of the lattice $(\tau,\sqcap,\sqcup)$ is called the \emph{logic order} of $(\tau,i_X)$ in \cite{Jakl,JM2006, JM2008}.\footnote{We have flipped notations from   main references for d-frames \cite{Jakl,JM2006,JM2008}, the symbols $\sqsubseteq,\sqcap$ and $\sqcup$, instead of $\leq,\wedge$ and $\vee$, are used for the logic order here.}
		
		\begin{defn}(\cite[Definition 2.3.2]{Jakl})
			A d-frame is a structure $\bbL=(L;\dbt,\dbf;\mathbf{con},\mathbf{tot})$, where $L$ is a frame, $\dbt$ and $\dbf$ are a pair of elements of $L$ that are complement of each other, $\mathbf{con}$ and $\mathbf{tot}$ are subset of $L$, called the consistency relation and the totality relation respectively,  subject to the following conditions: 
			\begin{itemize}
				\item $\mathbf{con}$ is a Scott closed set and $\mathbf{tot}$ is an upper set of $(L,\leq)$; that is, 
				
				($\mathbf{con}$--$\downarrow$) $\alpha\leq\beta$ \& $\beta\in\mathbf{con}\implies \alpha\in\mathbf{con}$
				
				($\mathbf{con}$--$\bigvee^\uparrow$) $A\subset\mathbf{con}$ is directed w.r.t. $\leq\implies \bigvee  A\in\mathbf{con}$
				
				($\mathbf{tot}$--$\uparrow$) $\alpha\leq\beta$ \& $\alpha\in\mathbf{tot}\implies \beta\in\mathbf{tot}$;
				
				\item   $\mathbf{con}$ and $\mathbf{tot}$ are sublattices of $(L;\sqcap,\sqcup)$; that is, 
				
				($\mathbf{con}$--$\sqcap,\sqcup$) $\alpha,\beta\in\mathbf{con}\implies \alpha\sqcap\beta,\alpha\sqcup\beta\in\mathbf{con}$ 
				
				($\mathbf{tot}$--$\sqcap,\sqcup$) $\alpha,\beta\in\mathbf{tot}\implies \alpha\sqcap\beta,\alpha\sqcup\beta\in\mathbf{tot}$;
				
				\item ($\mathbf{con}$--$\dbt,\dbf$) $\dbt,\dbf\in\mathbf{con}$ 
				
				($\mathbf{tot}$--$\dbt,\dbf$) $\dbt,\dbf\in\mathbf{tot}$;
				
				\item ($\mathbf{con}$--$\mathbf{tot}$) $\alpha\in\mathbf{con}$, $\beta\in\mathbf{tot}$,  $(\alpha\wedge\dbt=\beta\wedge\dbt\text{ or }\alpha\wedge\dbf= \beta\wedge\dbf)\implies \alpha\leq\beta$.
			\end{itemize} 
		\end{defn}
		
		D-frames in the above definition are called   \emph{reasonable d-frames} by their inventors Jung and Moshier \cite{JM2006,JM2008}. We follow Jakl \cite{Jakl}  and call them simply \emph{d-frames}.
		
		\begin{exmp} (\cite{JM2006,JM2008}) The Boolean algebra $\bbB=\{1,0,\dbt,\dbf\}$ can be made into a d-frame in a unique way: $\mathbf{tot}=\{\dbt,\dbf,1\}$, $\mathbf{con}=\{\dbt,\dbf,0\}$. We always view $\bbB$ as a d-frame in this subsection. \end{exmp}
		
		Let $\bbL=(L;\dbt_L,\dbf_L; \mathbf{con}_L,\mathbf{tot}_L)$ and $\mathbb{M}= (M;\dbt_M,\dbf_M;\mathbf{con}_M,\mathbf{tot}_M)$ be d-frames.  A d-frame homomorphism  $f\colon \bbL\lra \mathbb{M}$   is a frame homomorphism $f\colon L\lra M$ that preserves $\dbt,\dbf,\mathbf{con}$ and $\mathbf{tot}$, i.e.,  $$f(\dbt_L)=\dbt_M,~f(\dbf_L)=\dbf_M,~f(\mathbf{con}_L)\subseteq\mathbf{con}_M,~ f(\mathbf{tot}_L)\subseteq \mathbf{tot}_M. $$ The category of d-frames and d-frame homomorphisms is denoted by  $\dFrm.$ 
		
		Suppose $(L;\dbt,\dbf; \mathbf{con},\mathbf{tot})$ is a d-frame. It is clear that   $[0,\dbt]$ and $[0,\dbf]$ are frames. Furthermore, $L$  is   isomorphic to the product $[0,\dbt]\times[0,\dbf]$ via the correspondence $$\alpha\mapsto (\alpha\wedge\dbt,\alpha\wedge\dbf).$$  Hence, a d-frame homomorphism $f\colon \bbL\lra \mathbb{M}$ is determined by its restriction on $[0,\dbt]$ and $[0,\dbf]$, which gives us two  frame homomorphisms.   
		
		For each bitopological space $(X,\tau[\dbt],\tau[\dbf])$, it is readily verified that $$(\tau[\dbt]\times \tau[\dbf];(X,\emptyset),(\emptyset,X);\mathbf{con},\mathbf{tot})$$ is a d-frame, where \begin{align*}\mathbf{con}&=\{(U,V)\in\tau[\dbt]\times\tau[\dbf]:U\cap V=\emptyset\},\\ \mathbf{tot}&=\{(U,V)\in\tau[\dbt]\times\tau[\dbf]:U\cup V=X\}.\end{align*} For each continuous map $f\colon(X,\tau_X[\dbt],\tau_Y[\dbf])\lra (Y,\tau_Y[\dbt],\tau_Y[\dbf])$  between bitopological spaces,  $$ {f}^{-1}\colon \tau_Y[\dbt]\times \tau_Y[\dbf]\lra \tau_X[\dbt]\times \tau_X[\dbf], \quad (U,V)\mapsto (f^{-1}(U),f^{-1}(V))$$ is a d-frame homomorphism. These are the constituents of the  functor 
		$$\dO\colon\bf{BiTop}\lra \dFrm^{\rm op}.$$
		
		As in the situation for topological spaces and frames, the functor $\dO$ has a right adjoint.
		
		\begin{defn}(\cite[Definition 3.4]{JM2008})
			A  d-point of a d-frame $\mathbb{L}=(L;\dbt,\dbf; \mathbf{con},\mathbf{tot})$  is a d-frame homomorphism $p\colon \mathbb{L}\lra\bbB$.  \end{defn} 
		
		Since $L$ is isomorphic to the product $[0,\dbt]\times[0,\dbf]$, a d-point of a d-frame $\bbL$ is essentially a pair of frame homomorphisms $$p_\dbt\colon[0,\dbt]\lra\{0,\dbt\}\quad\text{and}\quad p_\dbf\colon[0,\dbf]\lra\{0,\dbf\}$$ subject to the following conditions:  \begin{itemize} \item[]  $(\text{dp}_{\mathbf{con}})$ $\alpha \in \mathbf{con} \implies p_\dbt(\alpha\wedge\dbt)=0$ or $p_\dbf(\alpha\wedge\dbf)=0$; \item[] $(\text{dp}_{\mathbf{tot}})$ $\alpha \in \mathbf{tot} \implies p_\dbt(\alpha\wedge\dbt)=\dbt$ or $p_\dbf(\alpha\wedge\dbf)=\dbf$. \end{itemize}
		
		The set ${\rm dpt}(\bbL)$ of d-points of a d-frame $\bbL$ becomes a bitopological space by considering  as the topology $\tau[\dbt]$ the collection of $$  \Phi_\dbt(a) =\{p :p(a)= \dbt\},\quad a\in [0,\dbt],$$   and as the   topology $\tau[\dbf]$ the collection of $$\Phi_\dbf(b) =\{p:p(b)= \dbf\},\quad b\in [0,\dbf].$$    
		The construction for objects is extended to a  functor 
		$${\rm dpt}\colon \dFrm^{\rm op}\lra \bf{BiTop}$$ in the usual way \cite{JM2006,JM2008}.
		
		\begin{thm}{\rm(\cite[Theorem 3.5]{JM2008})}
			${\rm dpt}\colon \dFrm^{\rm op}\lra \bf{BiTop}$ is right adjoint to  $\dO\colon\bf{BiTop}\lra \dFrm^{\rm op}$.  
		\end{thm}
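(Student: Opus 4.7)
The plan is to establish the adjunction by exhibiting a natural bijection
\[\bf{BiTop}(X,{\rm dpt}(\bbL))\;\cong\;\dFrm(\bbL,\dO(X)),\]
and then verifying naturality in both arguments. Since every d-frame $\bbL$ decomposes as $L\cong[0,\dbt]\times[0,\dbf]$, a d-frame homomorphism $h\colon\bbL\lra\dO(X)=\tau_X[\dbt]\times\tau_X[\dbf]$ is determined by a pair of frame homomorphisms $h_\dbt\colon[0,\dbt]_\bbL\lra\tau_X[\dbt]$ and $h_\dbf\colon[0,\dbf]_\bbL\lra\tau_X[\dbf]$ subject to the conditions that $(a\wedge\dbt,b\wedge\dbf)\in\mathbf{con}_\bbL$ implies $h_\dbt(a\wedge\dbt)\cap h_\dbf(b\wedge\dbf)=\emptyset$, and $(a\wedge\dbt,b\wedge\dbf)\in\mathbf{tot}_\bbL$ implies $h_\dbt(a\wedge\dbt)\cup h_\dbf(b\wedge\dbf)=X$. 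This decomposition is what drives everything.

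First, I would construct the unit $\eta_X\colon X\lra{\rm dpt}(\dO(X))$ by sending $x$ to the evaluation-at-$x$ d-point: on the $\dbt$-side, $U\mapsto\dbt$ if $x\in U$ and $0$ otherwise; on the $\dbf$-side, $V\mapsto\dbf$ if $x\in V$ and $0$ otherwise. The d-point axioms $(\text{dp}_{\mathbf{con}})$ and $(\text{dp}_{\mathbf{tot}})$ become precisely the statements that a disjoint pair $(U,V)$ cannot both contain $x$, and a covering pair $(U,V)$ must contain $x$ somewhere — both immediate. Continuity of $\eta_X$ is equally transparent: $\eta_X^{-1}(\Phi_\dbt(U))=U$ and $\eta_X^{-1}(\Phi_\dbf(V))=V$.

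Next, given a continuous $f\colon X\lra{\rm dpt}(\bbL)$, I define $f^\sharp\colon\bbL\lra\dO(X)$ on components by $f^\sharp_\dbt(a)=f^{-1}(\Phi_\dbt(a))$ for $a\in[0,\dbt]$ and $f^\sharp_\dbf(b)=f^{-1}(\Phi_\dbf(b))$ for $b\in[0,\dbf]$. Since $\Phi_\dbt$ is a frame homomorphism from $[0,\dbt]_\bbL$ to $\tau_{{\rm dpt}(\bbL)}[\dbt]$ (and similarly for $\Phi_\dbf$), and $f^{-1}$ preserves finite meets and arbitrary joins, each $f^\sharp_\dbt,f^\sharp_\dbf$ is a frame homomorphism. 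To verify preservation of $\mathbf{con}$ and $\mathbf{tot}$: if $\alpha\in\mathbf{con}_\bbL$, pick any d-point $p$ in $\Phi_\dbt(\alpha\wedge\dbt)\cap\Phi_\dbf(\alpha\wedge\dbf)$; that would force both $p_\dbt(\alpha\wedge\dbt)=\dbt$ and $p_\dbf(\alpha\wedge\dbf)=\dbf$, contradicting $(\text{dp}_{\mathbf{con}})$, so the intersection is empty and $f^\sharp(\alpha)\in\mathbf{con}_{\dO(X)}$; the $\mathbf{tot}$ case is analogous using $(\text{dp}_{\mathbf{tot}})$. Conversely, any d-frame homomorphism $h\colon\bbL\lra\dO(X)$ yields $h^\flat\colon X\lra{\rm dpt}(\bbL)$ with $h^\flat(x)_\dbt(a)=\dbt$ iff $x\in h_\dbt(a)$, and similarly on $\dbf$; this is a d-point by the same translation in reverse, and it is continuous since $(h^\flat)^{-1}(\Phi_\dbt(a))=h_\dbt(a)\in\tau_X[\dbt]$.

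It remains to check that $(-)^\sharp$ and $(-)^\flat$ are inverse bijections and natural. Both inverse identities reduce to unwinding the definitions: $(f^\sharp)^\flat(x)$ sends $a$ to $\dbt$ iff $x\in f^{-1}(\Phi_\dbt(a))$ iff $f(x)\in\Phi_\dbt(a)$ iff $f(x)_\dbt(a)=\dbt$, giving $f(x)$; and $(h^\flat)^\sharp_\dbt(a)=\{x:x\in h_\dbt(a)\}=h_\dbt(a)$. Naturality in $X$ (along a continuous $g\colon X'\lra X$) and in $\bbL$ (along a d-frame map $k\colon\bbL'\lra\bbL$) then follow by direct substitution. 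The one genuinely delicate point — and the only part I would expect to slow a careful referee down — is checking the interaction between the con/tot conditions and the product decomposition $L\cong[0,\dbt]\times[0,\dbf]$, specifically that the evaluation-based d-point really does respect both relations simultaneously; everything else is bookkeeping.
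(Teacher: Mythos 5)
Your argument is correct and is essentially the standard hom-set-bijection proof of this adjunction (unit given by evaluation d-points $[x]$, mate of $f\colon X\lra{\rm dpt}(\bbL)$ given by preimages of the subbasic opens $\Phi_\dbt(a),\Phi_\dbf(b)$), which is the approach of the cited source. Note that the paper itself offers no proof here --- it simply quotes \cite[Theorem 3.5]{JM2008} --- so there is nothing internal to compare against; your reconstruction, including the verification that $[x]$ and $f^\sharp$ respect $\mathbf{con}$ and $\mathbf{tot}$ via $(\text{dp}_{\mathbf{con}})$ and $(\text{dp}_{\mathbf{tot}})$, fills that gap correctly.
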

		
		Suppose  $(X,\tau[\dbt],\tau[\dbf])$ is a bitopological space and   $x\in X$. Then $$[x]\colon\tau[\dbt]\times\tau[\dbf]\lra\bbB$$  
		is a d-point of the d-frame $\dO(X,\tau[\dbt],\tau[\dbf])$, where $$ [x](U,V) \geq\dbt \iff x\in U, \quad   [x](U,V) \geq\dbf \iff x\in V.$$  
		
		\begin{defn}(\cite{JM2006,JM2008}) \label{sober bitop}
			A bitopological space $(X,\tau[\dbt],\tau[\dbf])$ is d-sober if for each d-point $p$ of  the d-frame $$\dO(X,\tau[\dbt],\tau[\dbf]),$$  there is a unique point $x$ of $X$ such that $p=[x]$.  
		\end{defn}
		
		A sober bitopological space is  necessarily $T_0$. As in the situation for topological spaces, a bitopological $X$ is sober if and only if the component $X\lra{\rm dpt}\circ\dO(X)$  of the unit of the adjunction $\dO\dashv{\rm dpt}$ at $X$ is a bijection, hence a homeomorphism. The reader is referred to \cite{Jakl,JM2006,JM2008} for more information on d-sober spaces. In particular,   every d-sober bitopological space is join sober \cite[Lemma 4.6]{JM2006}, hence sober in the sense of \cite{BBH}; a componentwise sober bitopological space need not be d-sober \cite[Counterexample 4.5]{JM2006}. 
		
		It is well-known that a Hausdorff topological space is a sober space. 
		Theorem 3.9 in \cite{JM2008} shows that every order-separated bitopological space is d-sober. The following theorem shows that every Hausdorff  bitopological space in the sense of Definition \ref{def of Hausdorff} is d-sober. We remind the reader that order-separated  and Hausdorff do not imply each other, see   Subsection \ref{R1 and Hausdorff}.
		
		\begin{prop}\label{Hd-s}
			Every Hausdorff  bitopological space is d-sober.
		\end{prop}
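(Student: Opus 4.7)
The plan is to extract from a d-point $p = (p_\dbt, p_\dbf)$ of $\dO(X)$ an actual point of $X$, using the $\mathbf{tot}$ condition for existence, componentwise $R_1$ to verify that the candidate works, and join $T_0$ for uniqueness.

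First I would associate to $p$ the completely prime filters $\mathcal{F}_\dbt = p_\dbt^{-1}(\dbt) \subseteq \tau[\dbt]$ and $\mathcal{F}_\dbf = p_\dbf^{-1}(\dbf) \subseteq \tau[\dbf]$, together with their hulls $F_\dbt = X \setminus \bigcup\{U \in \tau[\dbt] : U \notin \mathcal{F}_\dbt\}$ and $F_\dbf = X \setminus \bigcup\{V \in \tau[\dbf] : V \notin \mathcal{F}_\dbf\}$, which are nonempty irreducible closed sets in $(X, \tau[\dbt])$ and $(X, \tau[\dbf])$ respectively, by the standard frame-theoretic correspondence between completely prime filters and prime open sets. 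Next, I would show $F_\dbt \cap F_\dbf \neq \emptyset$: if the intersection were empty then $(X \setminus F_\dbt)\cup(X \setminus F_\dbf) = X$, so $(X \setminus F_\dbt,\, X \setminus F_\dbf) \in \mathbf{tot}$; but $p$ sends both coordinates to $0$ by construction of the hulls, contradicting $(\text{dp}_{\mathbf{tot}})$.

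Now pick any $x \in F_\dbt \cap F_\dbf$; I claim $p = [x]$. This rests on the key lemma that \emph{in any $R_1$ topological space every nonempty irreducible closed set $F$ equals $\overline{\{y\}}$ for every $y \in F$}. Granting the lemma, $\mathcal{F}_\dbt = \{U \in \tau[\dbt] : U \cap F_\dbt \neq \emptyset\} = \{U \in \tau[\dbt] : x \in U\}$, and similarly for $\dbf$, which is exactly the condition $p = [x]$. Uniqueness is then immediate: if $[x] = [y] = p$, then $\lambda(x) = \lambda(y)$ for every $\lambda \in \tau$, so $x = y$ by join $T_0$ (Proposition~\ref{T-0}).

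The main obstacle is the $R_1$ lemma above. I would prove it directly: take $y \in F$ with $y \notin \overline{\{x\}}$; since an $R_1$ space is in particular $R_0$, Lemma~\ref{R0=symmetric} gives symmetry of the specialization preorder, whence $x \notin \overline{\{y\}}$ as well, so $\overline{\{x\}} \neq \overline{\{y\}}$, and $R_1$ supplies disjoint open neighborhoods $U \ni x$, $V \ni y$; but then $F = (F \setminus U) \cup (F \setminus V)$ exhibits $F$ as a proper union of closed subsets, contradicting irreducibility. Componentwise $R_1$ (Proposition~\ref{Prehaus is pairwise}) then applies the lemma to both $(X, \tau[\dbt])$ and $(X, \tau[\dbf])$, and the proof assembles.
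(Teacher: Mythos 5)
Your proposal is correct and follows essentially the same route as the paper's proof: associate to the d-point the pair of irreducible closed hulls $K_\dbt$, $K_\dbf$, use the totality condition to see that $K_\dbt\cap K_\dbf\neq\emptyset$, invoke the fact that in an $R_1$ space every irreducible closed set is the closure of each of its points (which the paper dismisses as ``readily verified'' and you prove explicitly), and get uniqueness from join $T_0$. No gaps.
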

		
		\begin{proof}
			Suppose   $(X,\tau[\dbt],\tau[\dbf])$ is a Hausdorff bitopological space. 	Let $p$ be a d-point of the d-frame $\dO(X,\tau[\dbt],\tau[\dbf])$. Then,  
			the complement $K_\dbt$ of $\bigcup\{U\in\tau[\dbt]:p(U) =0\}$ is an irreducible closed set of $(X,\tau[\dbt])$, the complement $K_\dbf$ of $\bigcup\{V\in\tau[\dbf]:p(V) =0\}$ is an irreducible closed set of $(X,\tau[\dbf])$. Since $(X,\tau[\dbt])$ is $R_1$, it is readily verified that $K_\dbt$ is the closure   of each of its element in $(X,\tau[\dbt])$; that is, $K_\dbt=\cl_{\tau[\dbt]}\{x\}$ for all $x\in K_\dbt$.  Likewise, $K_\dbf$ is the closure   of each of its element in $(X,\tau[\dbf])$. 
			Since $p(X\setminus K_\dbt)=0$ and $p(X\setminus K_\dbf)=0$, it follows  
			that $K_\dbt\cap K_\dbf\not=\emptyset$. Any point $x$ of $K_\dbt\cap K_\dbf$ satisfies  $[x]=p$. Uniqueness of $x$ follows from that $(X,\tau[\dbt], \tau[\dbf])$ is $T_0$.
		\end{proof} 
		 
	\subsection{\texorpdfstring{$\mathbb{B}$}{}-sobriety}
	
	Let $G\colon \dFrm\lra\mathbb{B}\downarrow\mathbf{Frm}$ denote the forgetful functor, which forgets the consistency and the totality relation of d-frames. Explicitly, $G$ maps a d-frame  $ (L;\dbt, \dbf; \mathbf{con},\mathbf{tot})$ to $(L,i_L)$, where  $i_L\colon \bbB\lra L$ is the frame homomorphism given by $i_L(\dbt)=\dbt$ and $i_L(\dbf)=\dbf$.  
	
	For an object $(L,i_L)$ of the slice category $\BFrm$, we also write $\dbt$ and $\dbf$ for the elements $i_L(\dbt)$ and $i_L(\dbf)$ of $L$, respectively. This will cause no confusion. Then $$F(L,i_L)\coloneqq(L;  \dbt,\dbf;  \mathbf{con},\mathbf{tot})$$ is a d-frame, where $\mathbf{con}= \thinspace\downarrow\!\dbt\thinspace\cup\downarrow\!\dbf$ and $\mathbf{tot}=\thinspace\uparrow\!\dbt\thinspace \cup\uparrow\!\dbf$. In this way we obtain a functor $$F\colon\BFrm\lra\dFrm.$$ 
	
	\begin{prop} $F\colon\BFrm\lra\dFrm$ is left adjoint to  $G\colon\dFrm\lra\BFrm$.
	\end{prop}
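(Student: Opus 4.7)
The key observation driving my plan is that $G \circ F = \mathrm{id}_{\mathbb{B}\downarrow\mathbf{Frm}}$ on the nose: applying $F$ adds the minimal possible consistency and totality relations, and then $G$ strips them away. So I would take the unit $\eta_{(L,i_L)}\colon (L,i_L)\lra GF(L,i_L)$ to be the identity morphism, reducing the adjunction to a universal-property check: for every d-frame $\mathbb{M}$ and every $\mathbb{B}\downarrow\mathbf{Frm}$-morphism $f\colon (L,i_L)\lra G\mathbb{M}$, I must produce a unique d-frame morphism $\tilde{f}\colon F(L,i_L)\lra\mathbb{M}$ with $G\tilde{f}=f$.

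Before that, I would verify $F$ is well-defined. For $F(L,i_L)=(L;\dbt,\dbf;\downarrow\!\dbt\cup\downarrow\!\dbf,\uparrow\!\dbt\cup\uparrow\!\dbf)$, the axioms $(\mathbf{con}\text{-}\!\downarrow)$, $(\mathbf{tot}\text{-}\!\uparrow)$, $(\mathbf{con}\text{-}\dbt,\dbf)$ and $(\mathbf{tot}\text{-}\dbt,\dbf)$ are immediate from the definition. Closure of $\mathbf{con}$ under directed joins follows from a short argument: if $A\subseteq\downarrow\!\dbt\cup\downarrow\!\dbf$ is directed and contained both $a\leq\dbt$ with $a\not\leq\dbf$ and $b\leq\dbf$ with $b\not\leq\dbt$, a common upper bound $c\in A$ would yield a contradiction whichever of $c\leq\dbt$ or $c\leq\dbf$ holds; hence $A\subseteq\downarrow\!\dbt$ or $A\subseteq\downarrow\!\dbf$, and the same holds for $\bigvee A$. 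For the sublattice axioms with respect to $(\sqcap,\sqcup)$, using the isomorphism $L\cong[0,\dbt]\times[0,\dbf]$ given by $\alpha\mapsto(\alpha\wedge\dbt,\alpha\wedge\dbf)$, membership in $\downarrow\!\dbt\cup\downarrow\!\dbf$ corresponds to one of the two components vanishing, and a direct four-case analysis on $(\alpha,\beta)$ via the formulas defining $\sqcap,\sqcup$ shows closure; totality is dual. The axiom $(\mathbf{con}\text{-}\mathbf{tot})$ is immediate, since, for instance, $\alpha\leq\dbt$ together with $\alpha\wedge\dbt=\beta\wedge\dbt$ gives $\alpha=\beta\wedge\dbt\leq\beta$, the other three cases being symmetric. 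Functoriality of $F$ on arrows is automatic since any $\mathbb{B}\downarrow\mathbf{Frm}$-morphism sends $\dbt_L,\dbf_L$ to $\dbt_M,\dbf_M$ and therefore $\downarrow\!\dbt_L\cup\downarrow\!\dbf_L$ into $\downarrow\!\dbt_M\cup\downarrow\!\dbf_M$, and analogously for the upward unions.

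With $F$ in hand, the universal-property check is where the real content sits, but it is quite short. Uniqueness of $\tilde{f}$ is forced by $G\tilde{f}=f$, since $G$ only forgets the relations and not the underlying map. For existence, set $\tilde{f}\coloneqq f$ as a frame homomorphism preserving $\dbt$ and $\dbf$. If $\alpha\in\mathbf{con}_{F(L,i_L)}$, then $\alpha\leq\dbt_L$ or $\alpha\leq\dbf_L$, hence $\tilde{f}(\alpha)\leq\dbt_M$ or $\tilde{f}(\alpha)\leq\dbf_M$; combining $\dbt_M,\dbf_M\in\mathbf{con}_{\mathbb{M}}$ with the downward closure of $\mathbf{con}_{\mathbb{M}}$ yields $\tilde{f}(\alpha)\in\mathbf{con}_{\mathbb{M}}$. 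The argument for $\mathbf{tot}$ is the exact dual, using the upper-set property. Thus $\tilde{f}$ is a d-frame morphism, completing the adjunction.

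The main obstacle is purely bookkeeping in the verification that $F(L,i_L)$ really is a d-frame; no conceptual difficulty arises once one notices that $\downarrow\!\dbt\cup\downarrow\!\dbf$ and $\uparrow\!\dbt\cup\uparrow\!\dbf$ are respectively the smallest Scott-closed and smallest upper sets containing $\dbt$ and $\dbf$, which is precisely the universality we need on the other side.
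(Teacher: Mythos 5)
Your overall strategy is the right one and is evidently what the paper dismisses as ``routine verification'': since $G\circ F$ is the identity on $\BFrm$, one takes the unit to be the identity and the whole adjunction reduces to the observation that $\downarrow\!\dbt\cup\downarrow\!\dbf$ and $\uparrow\!\dbt\cup\uparrow\!\dbf$ are the smallest admissible choices of $\mathbf{con}$ and $\mathbf{tot}$, so that every $\BFrm$-morphism $(L,i_L)\lra G\mathbb{M}$ is automatically a d-frame homomorphism $F(L,i_L)\lra\mathbb{M}$. Your existence and uniqueness argument for $\tilde f$, and your checks of the Scott-closedness, upper-set and sublattice axioms, are correct.

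The gap is in your treatment of ($\mathbf{con}$--$\mathbf{tot}$), which you call ``immediate'' after doing one case and declaring ``the other three cases symmetric''. They are not. The two matched cases ($\alpha\leq\dbt$ with $\alpha\wedge\dbt=\beta\wedge\dbt$, and $\alpha\leq\dbf$ with $\alpha\wedge\dbf=\beta\wedge\dbf$) go through exactly as you write, without even using $\beta\in\mathbf{tot}$. But in a mixed case, say $\alpha\leq\dbt$ and $\alpha\wedge\dbf=\beta\wedge\dbf$, you only obtain $\beta\wedge\dbf=0$ and must then invoke $\beta\in\uparrow\!\dbt\cup\uparrow\!\dbf$: the subcase $\beta\geq\dbt$ yields $\alpha\leq\dbt\leq\beta$, but the subcase $\beta\geq\dbf$ forces $\dbf=\beta\wedge\dbf=0$ and the desired conclusion $\alpha\leq\beta$ is then simply unavailable. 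This is not a repairable slip in the argument; the axiom genuinely fails when $i_L$ is not injective. For instance, take $L=\{0,1\}$ with $i_L(\dbt)=1$ and $i_L(\dbf)=0$ (a legitimate object of $\BFrm$): then $\mathbf{con}=\mathbf{tot}=L$, and ($\mathbf{con}$--$\mathbf{tot}$) applied to $\alpha=1\in\mathbf{con}$ and $\beta=0\in\mathbf{tot}$, whose $\dbf$-components agree (both equal $0$), would force $1\leq 0$. So $F(L,i_L)$ lies in $\dFrm$ only when $\dbt$ and $\dbf$ are distinct from $0$ and $1$ in $L$ (or $L$ is trivial) --- a hypothesis satisfied by every $(\tau,i_X)$ arising from a bitopological space, but not by every object of the coslice category. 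You should either restrict to this full subcategory or flag explicitly that the mixed cases are where the restriction is needed; as written, your proof (and, strictly speaking, the proposition itself) does not hold on all of $\BFrm$.
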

	
	\begin{proof}Routine verification. \end{proof}
	
	The composite of the adjunctions $\dO\dashv{\rm dpt}$ and $G^{\rm op}\dashv F^{\rm op}$ gives a dual adjunction between the category of bitopological spaces and the slice category $\BFrm$. This adjunction is essentially a special case of the adjunction in \cite[Lemma 1.3]{ZL95} between a category of frame-valued topological spaces and a slice category of frames. In the following we formulate this adjunction in terms of $\bbB$-topological spaces instead of bitopological spaces. 
	
	The assignment $(X,\tau)\mapsto (\tau,i_X)$ defines a functor $$\BO\colon\Btop\lra(\BFrm)^{\rm op},$$ where $i_X\colon\bbB\lra\tau$ is the frame homomorphism  that sends each $b\in\bbB$ to the constant open set $b_X$.
	
	Suppose $(L,i_L)$ is an object in the slice category $\BFrm$. By a \emph{$\bbB$-point} of $(L,i_L)$ we mean a morphism $p\colon(L,i_L)\lra(\bbB,\id_\bbB)$ in $\BFrm$; in other words,  $p\colon L\lra\bbB$ is a frame homomorphism such that $p\circ i_L=\id_\bbB$. On the set of all $\bbB$-points of $(L,i_L)$ we consider the $\bbB$-topology $\{\phi(a):a\in L\}$,  where $\phi(a)(p)= p(a)$ for all $a\in L$ and all $\bbB$-point $p$. In this way we obtain a functor $$\Bpt\colon(\BFrm)^{\rm op}\lra\Btop.$$ 
	
	\begin{prop} {\rm(\cite{ZL95})}  $\Bpt\colon(\BFrm)^{\rm op}\lra\Btop$ is right adjoint to  $\BO\colon\Btop\lra(\BFrm)^{\rm op}$.
	\end{prop}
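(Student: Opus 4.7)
The plan is to exhibit the adjunction through its unit and counit, mirroring the classical spatial-frame adjunction between \textbf{Top} and \textbf{Frm} and checking at each stage that the structural map from $\bbB$ is preserved.

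For each $(X,\tau)\in\Btop$ I define the unit $\eta_X\colon X\lra\Bpt(\tau,i_X)$ by $\eta_X(x)=[x]$, where $[x]\colon\tau\lra\bbB$ is the evaluation $[x](\lam)=\lam(x)$. Joins and finite meets in $\tau$ are computed pointwise, so $[x]$ is a frame homomorphism; and $[x]\circ i_X=\id_\bbB$ since $i_X(b)(x)=b$ for every $b\in\bbB$, so $[x]$ is indeed a $\bbB$-point of $(\tau,i_X)$. Continuity of $\eta_X$ is immediate from the identity $\phi(\lam)\circ\eta_X=\lam$ for every $\lam\in\tau$. Dually, for each $(L,i_L)\in\BFrm$ I define the counit $\epsilon_{(L,i_L)}\colon(L,i_L)\lra\BO\Bpt(L,i_L)$ by $a\mapsto\phi(a)$; the pointwise definition of the $\bbB$-topology on $\Bpt(L,i_L)$ makes $\phi$ a frame homomorphism, and the computation $\phi(i_L(b))(p)=p(i_L(b))=b$ shows $\phi\circ i_L$ is the constant-function embedding, so $\epsilon_{(L,i_L)}$ is a morphism in $\BFrm$.

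The triangle identities reduce to two tautologies: $\phi(\lam)\circ\eta_X=\lam$ for $\lam\in\tau$ (already recorded), and $[p](\phi(a))=\phi(a)(p)=p(a)$ for every $\bbB$-point $p$ of $(L,i_L)$ and every $a\in L$, giving $[p]\circ\phi=p$. Naturality of $\eta$ and $\epsilon$ is then a routine diagram chase, using how morphisms in $\Btop$ and $\BFrm$ pull these evaluations back and forth. The only subtlety is compatibility with the slice structure over $\bbB$: one must verify that $\eta_X$ takes values in the set of genuine $\bbB$-points (not arbitrary frame maps $\tau\lra\bbB$) and that $\epsilon_{(L,i_L)}$ is a $\BFrm$-morphism rather than merely a frame homomorphism. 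Both rely on the equality $i_X(b)(x)=b$ together with the defining condition $p\circ i_L=\id_\bbB$ on $\bbB$-points; modulo this check, the argument is identical to the classical spatial-frame adjunction, which is why the authors treat it as routine.
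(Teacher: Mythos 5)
Your proof is correct, but it follows the second of the two routes the paper gestures at rather than the one it actually spells out. The paper's proof is a one-line composition argument: it has already established the adjunctions $\dO\dashv{\rm dpt}$ (between $\bf{BiTop}$ and $\dFrm^{\rm op}$) and $F\dashv G$ (between $\BFrm$ and $\dFrm$), and it observes that $\Bpt={\rm dpt}\circ F^{\rm op}$ is a composite of right adjoints while $\BO=G^{\rm op}\circ\dO$ is a composite of left adjoints, so the adjunction $\BO\dashv\Bpt$ is inherited. You instead give the self-contained direct verification via unit $x\mapsto[x]$ and counit $a\mapsto\phi(a)$, checking the two triangle identities $\phi(\lam)\circ\eta_X=\lam$ and $[p]\circ\phi=p$ and noting that the slice structure over $\bbB$ is respected because $i_X(b)(x)=b$ and $p\circ i_L=\id_\bbB$; this is exactly the ``direct verification, as in [ZL95]'' that the authors mention as an alternative. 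The trade-off: the paper's route is essentially free given the earlier propositions but silently relies on the identification of $\bbB$-points of $(L,i_L)$ with d-points of $F(L,i_L)$ and of the corresponding topologies, whereas your argument needs no d-frame machinery at all and makes the unit and counit explicit, at the cost of the (routine) naturality checks you defer. Both are sound.
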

	
	\begin{proof} This follows from the fact that $\Bpt$ is the composite  ${\rm dpt}\circ F^{\rm op}$ of right adjoints, and $\BO$  is the composite  $G^{\rm op}\circ \dO$ of left adjoints. A direct verification is also easy, as in \cite{ZL95}. \end{proof} 
	
	\begin{defn}(\cite{ZL95}) 
		A $\bbB$-topological space $(X,\tau)$ is \emph{$\bbB$-sober} if for each $\bbB$-point $p$ of  $(\tau,i_X)$, there is a unique point $x$ of $X$ such that $p(\lam)=\lam(x)$ for all $\lam\in\tau$. \end{defn}
	
	While d-sobriety arises  from the adjunction $\dO\dashv{\rm dpt}$, with d-frames being viewed as algebraic duals of bitopological spaces; $\bbB$-sobriety arises from the adjunction $\BO\dashv\Bpt$, with objects in the slice category $\BFrm$ being viewed as algebraic duals of bitopological spaces. Explicitly, a $\bbB$-topological space $(X,\tau)$  is  $\bbB$-sober  if the component $(X,\tau)\lra\Bpt\circ\BO(X,\tau)$ of the unit of the adjunction $\BO\dashv\Bpt$ at $(X,\tau)$ is a bijection, hence a homeomorphism. 
	
	\begin{exmp}The space $(\bbB,\tau_\bbS)$ in Example \ref{Sierpinski} is $\bbB$-sober. Furthermore, the arguments in \cite{Nel,Noor} can be applied to show that the category of $\bbB$-sober spaces is the epireflective hull of the space $(\bbB,\tau_\bbS)$ in the category $\Btop_0$ of  $\bbB$-topological spaces satisfying the $T_0$ axiom.  
	\end{exmp}  
	
	$\bbB$-sobriety can be characterized in terms of irreducible closed sets. 
	
	\begin{defn}(\cite{Zhang2018}) Let $(X,\tau)$ be a $\bbB$-topological space. A closed set $\gamma$ of $(X,\tau)$ is irreducible if \begin{enumerate}[label=\rm(\roman*)] \setlength{\itemsep}{0pt} \item $\sub_X(\gamma,b_X)=b$ for all $b\in\bbB$;   \item   $\sub_X(\gamma,\mu_1\vee\mu_2)= \sub_X(\gamma,\mu_1)\vee\sub_X(\gamma,\mu_2$) for all closed sets $\mu_1, \mu_2$ of $(X,\tau)$.  \end{enumerate} \end{defn}
	Condition (i) is equivalent to requiring $\gamma$ be \emph{inhabited} in the sense that $\bigvee_{x\in X}\gamma(x)=1$.
	
	Suppose $(X,\tau[\dbt],\tau[\dbf])$ is a bitopological space. We say that $(K_\dbt,K_\dbf)$ is  a \emph{pair of irreducible closed sets}  of $(X,\tau[\dbt], \tau[\dbf])$ if, $K_\dbt$ is an irreducible closed set of $(X,\tau[\dbt])$ and  $K_\dbf$ is an irreducible closed set of $(X,\tau[\dbf])$.  The following lemma  says that irreducible closed sets of a $\bbB$-topological space $(X,\tau)$ correspond to pairs of irreducible closed sets of the bitopologial space $(X,\tau[\dbt], \tau[\dbf])$.
	
	\begin{lem}\label{characterizing irreducible sets} Suppose $\gamma$ is a closed set of a $\bbB$-topological space  $(X,\tau)$. Then,  $\gamma$ is irreducible if  and only if $(\gamma[\dbt], \gamma[\dbf])$ is a pair of irreducible closed sets of the bitopological space $(X,\tau[\dbt],\tau[\dbf])$. \end{lem}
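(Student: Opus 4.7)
The plan is to translate each of the two defining conditions of irreducibility of $\gamma$ into conditions on the cuts $\gamma[\dbt]$ and $\gamma[\dbf]$, exploiting the fact that any element $c\in\bbB$ is determined by the pair of truth values ``$c\geq\dbt$'' and ``$c\geq\dbf$'', so that equalities in $\bbB$ may be verified one cut at a time. Condition (i) is the easiest step: a short De Morgan computation in $\bbB$ yields
\[
\sub_X(\gamma,b_X)=\bw_{x\in X}(\gamma(x)\ra b)=b\vee\neg\bv_{x\in X}\gamma(x),
\]
so (i) is equivalent to $\bv_{x\in X}\gamma(x)=1$, which, since $\dbt$ and $\dbf$ are join-irreducible in $\bbB$, is in turn equivalent to both $\gamma[\dbt]$ and $\gamma[\dbf]$ being nonempty --- the inhabitedness part of classical irreducibility on each cut.

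The key technical observation for condition (ii) is that for any closed set $\mu$ of $(X,\tau)$,
\[
\sub_X(\gamma,\mu)\geq\dbt\iff\gamma[\dbt]\subseteq\mu[\dbt],\qquad \sub_X(\gamma,\mu)\geq\dbf\iff\gamma[\dbf]\subseteq\mu[\dbf],
\]
which follows from a short case check using $a\ra b=\neg a\vee b$ and the atomic structure of $\bbB$. Combining this with the identities $(\mu_1\vee\mu_2)[b]=\mu_1[b]\cup\mu_2[b]$ for $b\in\{\dbt,\dbf\}$, condition (ii) decomposes, one cut at a time, into the classical statement: whenever $\gamma[b]\subseteq\mu_1[b]\cup\mu_2[b]$, one has $\gamma[b]\subseteq\mu_1[b]$ or $\gamma[b]\subseteq\mu_2[b]$.

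This reformulation yields $(\Leftarrow)$ immediately, since topological irreducibility of each $\gamma[b]$ in $(X,\tau[b])$ delivers exactly the required dichotomy. For $(\Rightarrow)$, I would test (ii) against the auxiliary closed sets $\dbt_F$ (taking value $\dbt$ on a $\tau[\dbt]$-closed set $F$ and $0$ elsewhere) and the symmetric $\dbf_G$; each of these is a closed set of $(X,\tau)$ because, by the characterization of closed sets via cuts recalled earlier, both of its component cuts are closed. Plugging them into (ii) recovers the classical irreducibility of $\gamma[\dbt]$ and $\gamma[\dbf]$ respectively. The main mild obstacle is simply the bookkeeping of the $\bbB$-valued cut calculus --- verifying how $\sub_X$, $\vee$, and cuts interact --- but once that is recorded, both directions drop out.
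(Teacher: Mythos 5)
Your proposal is correct, and it arrives at the same reduction as the paper --- classical irreducibility of $\gamma[\dbt]$ in $(X,\tau[\dbt])$ and of $\gamma[\dbf]$ in $(X,\tau[\dbf])$ --- but by a somewhat different organization. The paper proves sufficiency by a four-way case analysis on the possible values of $\sub_X(\gamma,\mu_1)$ and $\sub_X(\gamma,\mu_2)$ in $\bbB$, whereas you first record the cut calculus ($\sub_X(\gamma,\mu)\geq b\iff\gamma[b]\subseteq\mu[b]$ for $b\in\{\dbt,\dbf\}$, $(\mu_1\vee\mu_2)[b]=\mu_1[b]\cup\mu_2[b]$, and join-primeness of $\dbt,\dbf$) and then observe that condition (ii) is literally the conjunction of the two classical irreducibility statements, one per cut, so both directions of the lemma fall out of a single equivalence. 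For necessity the two arguments are essentially the same: the paper tests (ii) against $\mu_i=\dbt_A\vee(\dbf\wedge\gamma)$, you test against the simpler $\dbt_A$, $\dbt_B$; the extra summand $\dbf\wedge\gamma$ in the paper only pins the two sides to exactly $1$ versus $\leq\dbf$ for a crisp numerical contradiction and is dispensable once one argues cut by cut. Your route buys uniformity and makes the ``if and only if'' transparent in one stroke; the paper's casewise check is more pedestrian but self-contained. The one point to make explicit when writing this up is that $\sub_X(\gamma,\mu_1\vee\mu_2)\geq\sub_X(\gamma,\mu_1)\vee\sub_X(\gamma,\mu_2)$ holds automatically by monotonicity of $\sub_X(\gamma,-)$, so only one implication per cut needs the irreducibility hypothesis --- you use this tacitly when you phrase the decomposed condition as a one-way implication.
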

	
	\begin{proof} For necessity, suppose  $\gamma$ is an irreducible  closed set of $(X,\tau)$. We wish to show that $\gamma[\dbt]$ is an irreducible closed set of $(X,\tau[\dbt])$  and $\gamma[\dbf]$ is an irreducible closed set of $(X,\tau[\dbf])$. That $\gamma[\dbt]\not=\emptyset$ is obvious. Suppose on the contrary that $\gamma[\dbt]$ is not an irreducible closed subset of $(X,\tau[\dbt])$. Then there are closed sets $A,B$ of $(X,\tau[\dbt])$ such that $\gamma[\dbt]\nsubseteq A$, $\gamma[\dbt]\nsubseteq B$, and $\gamma[\dbt]\subseteq A\cup B$. Let $$\mu_1=\dbt_A\vee(\dbf\wedge\gamma) \quad \text{and}\quad \mu_2= \dbt_B\vee(\dbf\wedge\gamma).$$ Then  $\sub_X(\gamma,\mu_1\vee\mu_2)=1$, but   $\sub_X(\gamma,\mu_1)\vee \sub_X(\gamma,\mu_2)\leq\dbf$, a contradiction. Likewise, $\gamma[\dbf]$ is an irreducible closed set of $(X,\tau[\dbf])$.
		
		For sufficiency,  suppose $\gamma$ is a closed set of $(X,\tau)$ such that $\gamma[\dbt]$ is an irreducible closed set of $(X,\tau[\dbt])$ and $\gamma[\dbf]$ is an irreducible closed set of $(X,\tau[\dbf])$. Since both $\gamma[\dbt]$ and $\gamma[\dbf]$ are not empty, then  $\bigvee_{x\in X}\gamma(x)=1$. 
		It remains to check that for all closed sets $\mu_1,\mu_2$ of $(X,\tau)$, $$\sub_X(\gamma,\mu_1\vee\mu_2) = \sub_X(\gamma,\mu_1)\vee\sub_X(\gamma,\mu_2).$$  We proceed with four cases. 
		
		Case 1.  $\sub_X(\gamma,\mu_1)=1$ or $\sub_X(\gamma,\mu_2)=1$. 
		In this case there is nothing to prove.
		
		Case 2.   $\sub_X(\gamma,\mu_1)=\dbt$ or $\sub_X(\gamma,\mu_2)=\dbt$. 
		Without loss of generality assume that  $\sub_X(\gamma,\mu_1)=\dbt$. In this case  $\gamma[\dbt]\subseteq\mu_1[\dbt]$, but $\gamma[\dbf]\nsubseteq\mu_1[\dbf]$. 
		If $\sub_X(\gamma,\mu_2)\geq\dbf$, then $\sub_X(\gamma,\mu_1)\vee\sub_X(\gamma,\mu_2)=1$, the equality holds trivially.   If $\sub_X(\gamma,\mu_2)\leq\dbt$, then $\gamma[\dbf]\nsubseteq\mu_2[\dbf]$. Since $\gamma[\dbf]$ is an irreducible closed set of $(X,\tau[\dbf])$, it follows that $\gamma[\dbf]\nsubseteq (\mu_1\vee\mu_2)[\dbf]$, and consequently, $$\sub_X(\gamma, \mu_1\vee\mu_2)=\dbt= \sub_X(\gamma,\mu_1)\vee \sub_X(\gamma,\mu_2).$$  
		
		Case 3.   $\sub_X(\gamma,\mu_1)=\dbf$ or $\sub_X(\gamma,\mu_2)=\dbf$.  
		Similar to Case 2. 
		
		Case 4. $\sub_X(\gamma,\mu_1)=0=\sub_X(\gamma,\mu_2)$. 
		We show that $\sub_X(\gamma,\mu_1\vee\mu_2)=0$. Since $\sub_X(\gamma,\mu_i)=0$, then $\gamma[\dbt]\nsubseteq\mu_i[\dbt]$ and  $\gamma[\dbf]\nsubseteq\mu_i[\dbf]$,   $i=1,2$. Since $\gamma[\dbt]$ is an irreducible closed set of $(X,\tau[\dbt])$, then $\gamma[\dbt]\nsubseteq (\mu_1\vee\mu_2)[\dbt]$, hence $\sub_X(\gamma,\mu_1\vee\mu_2)\leq\dbf$.  Likewise,  $\sub_X(\gamma,\mu_1\vee\mu_2)\leq\dbt$. Therefore,  $\sub_X(\gamma,\mu_1\vee\mu_2)=0$.  \end{proof}
	
	Since we do not distinguish $\bbB$-topological spaces with bitopological spaces, we'll say that a bitopologcal space  $(X,\tau[\dbt],\tau[\dbf])$ is  $\bbB$-sober if the space $(X,\tau)$ is $\bbB$-sober.

	We say that a pair $(K_\dbt,K_\dbf)$  of irreducible closed sets of $(X,\tau[\dbt],\tau[\dbf])$ is \emph{represented by} a point $x$ if, $K_\dbt$ is the closure of $\{x\}$ in $(X,\tau[\dbt])$ and $K_\dbf$ is the closure of $\{x\}$ in $(X,\tau[\dbf])$.
	The following proposition provides a characterization of $\bbB$-sobriety and d-sobriety of bitopological spaces in terms of pairs of irreducible closed sets, which is helpful in understanding the difference between them. 
	
	\begin{prop} \label{d-points} Suppose $(X,\tau[\dbt],\tau[\dbf])$ is a bitopological space. \begin{enumerate}[label=\rm(\roman*)] \setlength{\itemsep}{0pt} \item   $(X,\tau[\dbt], \tau[\dbf])$ is $\bbB$-sober if and only if   each pair $(K_\dbt,K_\dbf)$ of irreducible closed sets is represented by a unique point of $X$.  
			\item {\rm (\cite{JM2008})} $(X,\tau[\dbt],\tau[\dbf])$ is d-sober if and only if each pair $(K_\dbt,K_\dbf)$ of irreducible closed sets that satisfies the following conditions is represented by a unique point of $X$:  
			\begin{enumerate}[label=\rm(\alph*)] \setlength{\itemsep}{0pt} 
				\item  For all $U\in\tau[\dbt]$ and $V\in\tau[\dbf]$, if $U\cap V=\emptyset$, then either $K_\dbt\cap U=\emptyset$ or $K_\dbf\cap V=\emptyset$. \item  For all $U\in\tau[\dbt]$ and $V\in\tau[\dbf]$, if $U\cup V=X$, then either $K_\dbt\cap U\not=\emptyset$ or $K_\dbf\cap V\not=\emptyset$. \end{enumerate} \end{enumerate} \end{prop}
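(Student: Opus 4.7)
The plan is to reduce both parts to classical point-free duality by exploiting two parallel product decompositions. Applying the general fact recorded earlier (that any d-frame $L$ decomposes as $[0,\dbt]\times[0,\dbf]$) to the open-set lattice, I would first establish a frame isomorphism $\tau\cong\tau[\dbt]\times\tau[\dbf]$ via $\lambda\mapsto(\lambda[\dbt],\lambda[\dbf])$ with inverse $(U,V)\mapsto \dbt_U\vee\dbf_V$. Dually, $\bbB\cong\{0,\dbt\}\times\{0,\dbf\}$. Under these identifications, every $\BFrm$-morphism $p\colon(\tau,i_X)\to(\bbB,\id_\bbB)$ decomposes uniquely as a pair $(p_\dbt,p_\dbf)$ of frame homomorphisms $p_\dbt\colon\tau[\dbt]\to\{0,\dbt\}$ and $p_\dbf\colon\tau[\dbf]\to\{0,\dbf\}$, the condition $p\circ i_X=\id_\bbB$ amounting precisely to $p_\dbt(X)=\dbt$ and $p_\dbf(X)=\dbf$.

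With this decomposition in place, I would invoke the classical correspondence between nontrivial frame homomorphisms $\mathcal{O}(Y)\to 2$ and irreducible closed sets of $Y$, applied to each of $(X,\tau[\dbt])$ and $(X,\tau[\dbf])$. This yields a bijection between $\bbB$-points of $(\tau,i_X)$ and pairs $(K_\dbt,K_\dbf)$ of irreducible closed sets of the bitopological space, sending $p$ to
$$\bigl(X\setminus\bigcup\{U\in\tau[\dbt]:p_\dbt(U)=0\},\; X\setminus\bigcup\{V\in\tau[\dbf]:p_\dbf(V)=0\}\bigr).$$
The evaluation $\bbB$-point $[x]$ corresponds to the pair $(\cl_\dbt\{x\},\cl_\dbf\{x\})$, so $p=[x]$ iff the pair is represented by $x$ in the sense of the preceding definition. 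Part (i) follows at once, the uniqueness of $x$ on both sides being one and the same $T_0$-type statement.

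For part (ii), I would keep the bijection of the previous paragraph and interpret the two extra d-point axioms $(\mathrm{dp}_{\mathbf{con}})$ and $(\mathrm{dp}_{\mathbf{tot}})$ recorded after the definition of a d-point. Each axiom is a per-pair $(U,V)$ constraint on the values $p_\dbt(U)$ and $p_\dbf(V)$; using the translation $p_\dbt(U)=\dbt\iff K_\dbt\cap U\neq\emptyset$ (and dually for $p_\dbf$), the axiom $(\mathrm{dp}_{\mathbf{con}})$ for $(U,V)\in\mathbf{con}$ (i.e.\ $U\cap V=\emptyset$) converts verbatim into clause (a), and $(\mathrm{dp}_{\mathbf{tot}})$ for $(U,V)\in\mathbf{tot}$ (i.e.\ $U\cup V=X$) into clause (b). Combined with (i), this gives (ii).

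The main obstacle, although a mild one, is the bookkeeping for the frame isomorphism $\tau\cong\tau[\dbt]\times\tau[\dbf]$ and its compatibility with the slice structure: one must check that the map $\lambda\mapsto(\lambda[\dbt],\lambda[\dbf])$ preserves finite meets and arbitrary joins, that $i_X(\dbt)=\dbt_X$ and $i_X(\dbf)=\dbf_X$ go to the tops of the two factors respectively, and that a frame hom into $\{0,\dbt\}$ preserving the top is the same thing as a nontrivial point-free point of $(X,\tau[\dbt])$. Once these routine verifications are in place, the rest of the argument is a mechanical unpacking.
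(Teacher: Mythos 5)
Your argument is correct, and it is essentially the verification the paper itself declares routine for (i) and delegates to Jung--Moshier for (ii): the paper already sets up the product decomposition $L\cong[0,\dbt]\times[0,\dbf]$ and the description of (d-)points as pairs $(p_\dbt,p_\dbf)$, and your proof is the natural unpacking of that machinery via the classical bijection between nontrivial frame homomorphisms into $\mathbf{2}$ and irreducible closed sets. The bookkeeping steps you flag (that $\lambda\mapsto(\lambda[\dbt],\lambda[\dbf])$ is a frame isomorphism and that the slice condition forces the cross terms $p_\dbt(U)\wedge p_\dbf(V)=0$ to vanish, so $p$ really is a product of two $2$-valued points) all go through.
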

	\begin{proof} The verification of (i) is routine; (ii) is   contained  in \cite[Section 4]{JM2008}.  \end{proof}

	\begin{cor}Every $\mathbb{B}$-sober bitopological space is d-sober, hence join sober. \end{cor}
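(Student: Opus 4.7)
The plan is to deduce the corollary directly from Proposition \ref{d-points}, which gives parallel characterizations of $\bbB$-sobriety and d-sobriety in terms of pairs of irreducible closed sets. By part (i), a bitopological space $(X,\tau[\dbt],\tau[\dbf])$ is $\bbB$-sober precisely when every pair $(K_\dbt, K_\dbf)$ of irreducible closed sets is represented by a unique point of $X$. By part (ii), $(X,\tau[\dbt],\tau[\dbf])$ is d-sober precisely when every such pair that additionally satisfies conditions (a) and (b) is represented by a unique point.

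The key observation is therefore purely logical: the family of pairs appearing in (ii) is a subfamily of the family appearing in (i). Hence the universally quantified statement defining $\bbB$-sobriety trivially implies the one defining d-sobriety. This gives the first half of the corollary in essentially one line; no additional construction is required and there is no real obstacle, since all the genuine work has been absorbed into Proposition \ref{d-points} and Lemma \ref{characterizing irreducible sets}.

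For the second half (d-sober $\Rightarrow$ join sober), I would simply invoke the result already recalled in the paragraph following Definition \ref{sober bitop}, namely Lemma 4.6 of \cite{JM2006}, which states that every d-sober bitopological space is join sober. Chaining the two implications yields the full statement of the corollary.
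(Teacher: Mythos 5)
Your proposal is correct and matches the paper's (implicit) argument: the corollary is stated immediately after Proposition \ref{d-points} precisely because the pairs of irreducible closed sets quantified over in part (ii) form a subfamily of those in part (i), so $\bbB$-sobriety trivially implies d-sobriety, and the step to join sobriety is the cited Lemma 4.6 of \cite{JM2006}. No gap.
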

	
	But, a d-sober bitopological space need not be $\bbB$-sober, the space $(\mathbb{R},\tau)$ in Example \ref{eg2} provides a counterexample. First, since $X$ is order-separated, it is d-sober by \cite[Theorem 3.9]{JM2008}. Second, since the pair of irreducible closed sets $([1,\infty),(-\infty,0])$ is not represented by any point, it is not $\bbB$-sober.

	\begin{cor}\label{sobrification} Let $(X,\tau)$ be a $\bbB$-topological space. Then $(X,\tau)$ is d-sober if and only if each irreducible closed set $\gamma$ of $(X,\tau)$ that satisfies the following conditions is the closure of $1_x$ for a unique point $x$ of $X$: \begin{enumerate}[label=\rm(\alph*)] \setlength{\itemsep}{0pt} 
			\item  For each closed set $\mu$ of $(X,\tau)$, if $\mu(z)\not=0$ for all $z\in X$, then either $\dbt\wedge\gamma\leq\mu$ or $\dbf\wedge\gamma\leq\mu$. \item For each closed set $\mu$ of $(X,\tau)$, if $\mu(z)\not=1$ for all $z\in X$, then  $\gamma\not\leq\mu$. \end{enumerate}  
	\end{cor}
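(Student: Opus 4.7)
The plan is to deduce this from Proposition \ref{d-points}\thinspace(ii) via Lemma \ref{characterizing irreducible sets}, by translating everything between the $\bbB$-valued formulation and the bitopological one. Irreducible closed sets $\gamma$ of $(X,\tau)$ correspond bijectively to pairs $(K_\dbt,K_\dbf)$ of irreducible closed sets of $(X,\tau[\dbt],\tau[\dbf])$ via $\gamma\mapsto(\gamma[\dbt],\gamma[\dbf])$, and property (iv) of the closure operator gives $\overline{1_x}[\dbt]=\cl_\dbt\{x\}$ and $\overline{1_x}[\dbf]=\cl_\dbf\{x\}$, so ``$\gamma=\overline{1_x}$'' is equivalent to ``$(\gamma[\dbt],\gamma[\dbf])$ is represented by $x$.'' Hence it suffices to show that the two side conditions (a) and (b) in the corollary match, point by point, with the side conditions (a) and (b) in Proposition \ref{d-points}\thinspace(ii).

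For this matching I would set up the following dictionary. Given a closed set $\mu$ of $(X,\tau)$, put $U=X\setminus\mu[\dbt]\in\tau[\dbt]$ and $V=X\setminus\mu[\dbf]\in\tau[\dbf]$; conversely, given $U\in\tau[\dbt]$ and $V\in\tau[\dbf]$, define a $\bbB$-valued $\mu$ by prescribing $\mu[\dbt]=X\setminus U$ and $\mu[\dbf]=X\setminus V$, which is then closed in $(X,\tau)$. Under this correspondence I would observe the four equivalences that do the work:
\begin{itemize}\setlength{\itemsep}{0pt}
\item $\mu(z)\neq 0$ for all $z$ $\iff$ $\mu[\dbt]\cup\mu[\dbf]=X$ $\iff$ $U\cap V=\emptyset$;
\item $\mu(z)\neq 1$ for all $z$ $\iff$ $\mu[\dbt]\cap\mu[\dbf]=\emptyset$ $\iff$ $U\cup V=X$;
\item $\dbt\wedge\gamma\leq\mu$ $\iff$ $\gamma[\dbt]\subseteq\mu[\dbt]$ $\iff$ $\gamma[\dbt]\cap U=\emptyset$, and symmetrically $\dbf\wedge\gamma\leq\mu$ $\iff$ $\gamma[\dbf]\cap V=\emptyset$;
\item $\gamma\not\leq\mu$ $\iff$ some $z$ lies in $\gamma[\dbt]\cap U$ or in $\gamma[\dbf]\cap V$.
\end{itemize}
The first together with the third turns condition (a) of the corollary into condition (a) of Proposition \ref{d-points}\thinspace(ii); the second together with the fourth does the same for condition (b).

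The main obstacle is not conceptual but bookkeeping: one has to verify that the $\mu$ built from an arbitrary $(U,V)$ is genuinely a closed set of $(X,\tau)$ (which follows since $\mu[\dbt]$ and $\mu[\dbf]$ are closed componentwise, using the characterization of closed sets in $(X,\tau)$ recalled after the definition of $\bbB$-topology) and that the level-set computations behave as claimed for the mixed expressions $\dbt\wedge\gamma$ and $\dbf\wedge\gamma$. Once this dictionary is in place, the necessary-and-sufficient condition in the corollary is exactly the necessary-and-sufficient condition in Proposition \ref{d-points}\thinspace(ii) after applying Lemma \ref{characterizing irreducible sets}, so the equivalence follows immediately; uniqueness of the representing point transfers across the two formulations without change.
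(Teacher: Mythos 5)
Your proposal is correct and follows exactly the route the paper intends: the corollary is stated as an immediate consequence of Proposition \ref{d-points}\thinspace(ii) via Lemma \ref{characterizing irreducible sets}, and your dictionary between closed sets $\mu$ of $(X,\tau)$ and pairs $(U,V)=(X\setminus\mu[\dbt],X\setminus\mu[\dbf])$, together with the four level-set equivalences, is precisely the (omitted) translation. All four equivalences check out, so nothing is missing.
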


	By  \cite[Theorem 3.7]{Zhang2018} the $\bbB$-sobrification of $(X,\tau)$ is given by the space $(\hat{X},\hat{\tau})$, where $\hat{X}$ is the set of irreducible closed sets of $(X,\tau)$, and $\hat{\tau}$ is the collection of $\bbB$-valued subsets $$\hat{\lam}\colon \hat{X}\lra\bbB,\quad \gamma\mapsto \neg\sub_X(\gamma,\neg\lam),  
	\quad \lam\in\tau. $$ This construction is an extension of that of sobrification of topological spaces in terms of closed sets (see e.g. \cite{Goubault}) to the $\bbB$-valued context. 
	The subspace of $(\hat{X},\hat{\tau})$ composed of irreducible closed sets that satisfy the requirements  (a) and (b) in Corollary \ref{sobrification}  is the d-sobrification of $(X,\tau)$.

	\begin{exmp} $\bbB$-sobriety and componentwise sobriety do not imply each other.    
		\begin{enumerate}[label=\rm(\roman*)] \setlength{\itemsep}{0pt} \item  Given topological spaces $X$ and $Y$, consider the $\bbB$-topological space $X.Y= (X\times Y,\tau)$ in Example \ref{X.Y}.   It is not hard to check that each irreducible closed set of $(X\times Y,\tau[\dbt])$ is of the form $H \times Y$ with $H$ being an irreducible closed set of $X$;  each irreducible closed set  of $(X\times Y,\tau[\dbf])$ is of the form $X \times J$ with $J$ being an irreducible closed set of $Y$. With help of this fact one verifies that $X.Y$ is  $\bbB$-sober if and only if both of the topological  spaces $X$ and $Y$ are sober. But, the space  $X.Y$ is not componentwise $T_0$ unless both $X$ and $Y$ are singleton space. So, $\bbB$-sober $\bbB$-topological spaces need not be componentwise sober.  
			
			\item Let $X$ be a sober topological space. It is readily verified that the componentwise Hausdorff (hence Hausdorff and componentwise sober) space $\omega(X)$ is not $\bbB$-sober unless $X$ is a singleton space. Furthermore,  the $\bbB$-sobrification of   $\omega(X)$ is the space $X.X$.  \end{enumerate}  \end{exmp}

	\subsection{A Hofmann-Mislove theorem}
	
	The Hofmann-Mislove theorem \cite{HM1981,Gierz2003} says that for a sober space $X$, the poset of compact saturated subsets of $X$ (ordered by reverse inclusion) is isomorphic to the poset of Scott open filters of the open-set lattice  of $X$ (ordered by inclusion). In this subsection we establish a similar result for $\bbB$-sober spaces. In \cite{JM2006,JM2008}, making use of the d-frame structure of open sets of a bitopological space, Jung and Moshier have established a Hofmann-Mislove theorem for bitopological spaces, see \cite[Theorem 6.6]{JM2008}. The Hofmann-Mislove theorem established in this paper,  see Theorem \ref{HMT for Bitop} below,  only makes use of the fact that the open sets of a bitopological space form an object of the slice category $\BFrm$. While the result of Jung and Moshier establishes a connection between bitopological spaces and d-frames, the result presented here is about the connection between bitopological spaces and objects of a slice category of frames. So, the approaches  and the results are quite different.
	
	We need counterpart notions of compact sets, saturated sets, and Scott open filters in the bitopological context.
	
	\begin{defn} Suppose $(X,\tau)$ is a $\bbB$-topological space and   $\theta\in\bbB^X$. We say that $\theta$ is a compact ($\bbB$-valued) set of $(X,\tau)$ if for each directed family $\Lambda$ (w.r.t pointwise order) of open sets of $(X,\tau)$,  
		$$\sub_X\Big(\theta,\bigvee\Lambda\Big)= \bigvee_{\lam\in\Lambda}\sub_X(\theta,\lam).$$ 
	\end{defn}
	
	\begin{prop}\label{compact=pairwise}  Suppose $(X,\tau)$ is a $\bbB$-topological space and   $\theta\in\bbB^X$. Then  $\theta$ is   compact in $(X,\tau)$ if and only if $\theta[\dbt]$ is   compact in the topological space $(X,\tau[\dbt])$ and $\theta[\dbf]$ is a compact in the topological space $(X,\tau[\dbf])$. \end{prop}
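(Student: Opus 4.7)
The plan is to reduce $\bbB$-valued compactness of $\theta$ to the two ordinary compactness statements for $\theta[\dbt]$ and $\theta[\dbf]$ via the level-set calculus. The key observation, obtained by unpacking $a\to b=\neg a\vee b$ at each point of $X$, is that for every $\eta,\lam\in\bbB^X$,
$$\sub_X(\eta,\lam)\geq\dbt\iff \eta[\dbt]\subseteq\lam[\dbt],\qquad \sub_X(\eta,\lam)\geq\dbf\iff \eta[\dbf]\subseteq\lam[\dbf],$$
and that if $\Lambda\subseteq\bbB^X$ is directed in the pointwise order, then both $\{\lam[\dbt]:\lam\in\Lambda\}$ and $\{\lam[\dbf]:\lam\in\Lambda\}$ are directed families of (open) sets whose unions are $(\bigvee\Lambda)[\dbt]$ and $(\bigvee\Lambda)[\dbf]$, respectively.

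For necessity, assume $\theta$ is compact in $(X,\tau)$ and let $\{U_i\}_{i\in I}$ be a directed family of open sets of $(X,\tau[\dbt])$ with $\theta[\dbt]\subseteq\bigcup_iU_i$. The functions $\dbt_{U_i}$ lie in $\tau$, since their $\dbt$-level is $U_i\in\tau[\dbt]$ and their $\dbf$-level is $\emptyset\in\tau[\dbf]$, and $\{\dbt_{U_i}\}_{i\in I}$ remains directed. The inclusion $\theta[\dbt]\subseteq\bigcup_iU_i = \bigl(\bigvee_i\dbt_{U_i}\bigr)[\dbt]$ gives $\sub_X\bigl(\theta,\bigvee_i\dbt_{U_i}\bigr)\geq\dbt$, so compactness of $\theta$ yields some $i_0$ with $\sub_X(\theta,\dbt_{U_{i_0}})\geq\dbt$, equivalently $\theta[\dbt]\subseteq U_{i_0}$. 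Hence $\theta[\dbt]$ is compact in $(X,\tau[\dbt])$; a symmetric argument using the functions $\dbf_{V_j}$ establishes compactness of $\theta[\dbf]$ in $(X,\tau[\dbf])$.

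For sufficiency, let $\Lambda\subseteq\tau$ be directed and set $b=\sub_X(\theta,\bigvee\Lambda)$. The inequality $\bigvee_{\lam\in\Lambda}\sub_X(\theta,\lam)\leq b$ is automatic from monotonicity, so it suffices to prove the reverse inequality by cases on $b$. If $b\geq\dbt$, then $\theta[\dbt]\subseteq\bigcup_\lam\lam[\dbt]$, and compactness of $\theta[\dbt]$ furnishes some $\lam_1\in\Lambda$ with $\theta[\dbt]\subseteq\lam_1[\dbt]$, i.e. $\sub_X(\theta,\lam_1)\geq\dbt$; if $b\geq\dbf$, an analogous argument produces $\lam_2\in\Lambda$ with $\sub_X(\theta,\lam_2)\geq\dbf$. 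When $b=1$, directedness of $\Lambda$ supplies $\lam_3\in\Lambda$ above both $\lam_1$ and $\lam_2$, whence $\sub_X(\theta,\lam_3)\geq\dbt\vee\dbf=1$; the remaining cases $b=\dbt,\dbf,0$ are already handled by the previous steps.

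I do not foresee a serious obstacle: the proof is essentially a bookkeeping exercise once the level-set characterisation of $\sub_X$ is isolated. The only point requiring genuine care is the case analysis on $b\in\bbB$ in the sufficiency direction, together with the single invocation of directedness needed to combine the level-$\dbt$ and level-$\dbf$ witnesses into one open set when $b=1$.
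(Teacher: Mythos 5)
Your proof is correct and follows essentially the same route as the paper, which likewise reduces everything to the level-set equivalences $\dbt\leq\sub_X(\theta,\bigvee\Lambda)\iff\theta[\dbt]\subseteq\bigcup\{\lam[\dbt]:\lam\in\Lambda\}$ and its $\dbf$-counterpart; you have merely written out in full the bookkeeping (the auxiliary opens $\dbt_{U_i}$, the join-primeness of $\dbt$ and $\dbf$ in $\bbB$, and the directed-cover reformulation of ordinary compactness) that the paper leaves implicit.
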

	\begin{proof}This follows from the fact that for each family $\Lambda$ of open sets of $(X,\tau)$, $$\dbt\leq\sub_X\Big(\theta,\bigvee\Lambda\Big)\iff \theta[\dbt]\subseteq \bigcup\{\lam[\dbt]: \lam\in\Lambda\}$$ and that \begin{align*}\dbf\leq\sub_X\Big(\theta,\bigvee\Lambda\Big)&\iff \theta[\dbf]\subseteq \bigcup\{\lam[\dbf]: \lam\in\Lambda\}. \qedhere\end{align*} \end{proof}
	
	\begin{cor}  Suppose $(X,\tau)$ is a $\bbB$-topological space and   $\theta\in\bbB^X$. Then   $\theta$ is compact in $(X,\tau)$ if and only if for each directed family $\Lambda$ of open sets, $\theta\leq\bigvee\Lambda$ implies that $\theta\leq \lam$  for some $\lam\in \Lambda$. \end{cor}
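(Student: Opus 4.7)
The plan is to deduce this directly from the preceding Proposition \ref{compact=pairwise}, which translates compactness into a pair of crisp compactness conditions on the component topological spaces.

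For the necessity, suppose $\theta$ is compact and $\Lambda\subseteq\tau$ is directed with $\theta\leq\bigvee\Lambda$, equivalently $\sub_X(\theta,\bigvee\Lambda)=1$. The definition of compactness gives $\bigvee_{\lam\in\Lambda}\sub_X(\theta,\lam)=1$. Since $\sub_X(\theta,-)$ is order-preserving, the family $\{\sub_X(\theta,\lam):\lam\in\Lambda\}$ is a directed subset of the finite lattice $\bbB$, so it attains its join. Hence there is some $\lam_0\in\Lambda$ with $\sub_X(\theta,\lam_0)=1$, i.e.\ $\theta\leq\lam_0$.

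For the sufficiency, assume the displayed directed-cover property, and invoke Proposition \ref{compact=pairwise}: it suffices to show $\theta[\dbt]$ is compact in $(X,\tau[\dbt])$ and $\theta[\dbf]$ is compact in $(X,\tau[\dbf])$. Given a directed open cover $\CU$ of $\theta[\dbt]$ in $(X,\tau[\dbt])$, lift it to the $\bbB$-valued directed family $\Lambda=\{\dbt_U\vee\dbf_X:U\in\CU\}\subseteq\tau$; each member takes value $1$ on $U$ and $\dbf$ off $U$, and monotonicity of $U\mapsto\dbt_U\vee\dbf_X$ makes $\Lambda$ directed. A pointwise check shows $\theta\leq\bigvee\Lambda$: values $0$ and $\dbf$ of $\theta$ are always dominated, while on $\theta[\dbt]\subseteq\bigcup\CU$ the join evaluates to $1$. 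By hypothesis there is $U\in\CU$ with $\theta\leq\dbt_U\vee\dbf_X$, which forces $\theta[\dbt]\subseteq U$. The argument for $\theta[\dbf]$ is symmetric, using the lift $\{\dbf_V\vee\dbt_X:V\in\CV\}$ of a directed cover $\CV$ of $\theta[\dbf]$.

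Nothing here is genuinely hard; the two small points to get right are (i) that a directed family in the finite Boolean algebra $\bbB$ attains its join, which is what converts the $\bbB$-valued compactness identity into the desired existential cover statement, and (ii) the choice of lift $\dbt_U\vee\dbf_X$, where the $\dbf_X$ summand serves as padding so that the crisp cover $\bigcup\CU\supseteq\theta[\dbt]$ becomes equivalent to the $\bbB$-valued inequality $\theta\leq\bigvee\Lambda$.
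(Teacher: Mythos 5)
Your proof is correct and follows essentially the route the paper intends: the corollary is stated there without proof as a consequence of Proposition \ref{compact=pairwise}, and your sufficiency argument reduces to componentwise compactness of $\theta[\dbt]$ and $\theta[\dbf]$ exactly as that proposition suggests (with a valid choice of the padded lifts $\dbt_U\vee\dbf_X$, which do lie in $\tau$), while your necessity argument reads the conclusion directly off the definition using that the monotone image of a directed family in the finite lattice $\bbB$ attains its join. Both halves check out.
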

	  
	A $\bbB$-topological space $(X,\tau)$ is  \emph{compact} if the constant function $X\lra\bbB$ with value $1$ is compact. It follows from Proposition \ref{compact=pairwise} that $(X,\tau)$ is  compact if and only if it is componentwise compact; that is, both of  topological spaces  $(X,\tau[\dbt])$ and $(X,\tau[\dbf])$ are compact. 
	
	\begin{prop}Every Hausdorff and compact $\bbB$-topological space is normal; every regular and compact $\bbB$-topological space is normal. \end{prop}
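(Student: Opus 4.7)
The plan is to reduce both implications to their componentwise counterparts in classical topology, using the characterizations of normality, regularity, Hausdorffness, and compactness of a $\bbB$-topological space in terms of the two underlying topologies $\tau[\dbt]$ and $\tau[\dbf]$ that have already been established.

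First I would invoke Proposition \ref{normal=binormal}(i), which reduces the problem to showing that both $(X,\tau[\dbt])$ and $(X,\tau[\dbf])$ are normal topological spaces in the classical sense. For the compactness side, Proposition \ref{compact=pairwise} (applied to the constant function with value $1$) tells us that a compact $\bbB$-topological space is componentwise compact, so in each case both $(X,\tau[\dbt])$ and $(X,\tau[\dbf])$ are classical compact spaces.

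For the regular-and-compact case, Proposition \ref{regular is pairwise} gives that $(X,\tau[\dbt])$ and $(X,\tau[\dbf])$ are each regular, and then I would appeal to the classical fact that a compact regular topological space is normal (the standard argument: for disjoint closed sets $F,G$, use regularity at each $x\in F$ to get an open $U_x \ni x$ with $\overline{U_x} \cap G = \emptyset$, extract a finite subcover of $F$ by compactness, and take unions/complements of closures to separate $F$ and $G$). For the Hausdorff-and-compact case, Proposition \ref{Prehaus is pairwise} gives that $(X,\tau[\dbt])$ and $(X,\tau[\dbf])$ are each $R_1$, and I would invoke the parallel classical fact that a compact $R_1$ topological space is normal. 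This last fact can be seen directly by passing to the Kolmogorov quotient, which is compact Hausdorff hence normal, and pulling back the separating open sets along the quotient map (noting that closed sets in an $R_0$ space are saturated under the Kolmogorov equivalence, so disjoint closed sets in $X$ project to disjoint closed sets in the quotient); alternatively, one can mimic the standard compact-Hausdorff-is-normal argument, using only that points with distinct closures have disjoint neighborhoods.

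There is no real obstacle here: once Propositions \ref{regular is pairwise}, \ref{Prehaus is pairwise}, \ref{normal=binormal} and \ref{compact=pairwise} have been assembled, the statement is a straightforward componentwise application of two well-known classical theorems. The only minor point worth stating carefully is the compact-$R_1$-implies-normal step, since the classical reference is usually given for compact Hausdorff; I would include a one-sentence reduction to the Kolmogorov quotient to make the argument self-contained.
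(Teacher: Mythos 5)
Your proposal is correct and follows essentially the same route as the paper: reduce to the componentwise statements via Propositions \ref{normal=binormal}, \ref{compact=pairwise}, \ref{Prehaus is pairwise} and \ref{regular is pairwise}, then invoke the classical fact that a compact $R_1$ topological space is normal (the paper cites \cite[Theorem 8]{R1space} for this, covering the regular case as well since every regular space is $R_1$, whereas you sketch the Kolmogorov-quotient argument and treat the regular case by the compact-regular-implies-normal theorem directly). These are only cosmetic differences; the proof is sound.
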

	
	\begin{proof} This follows from Proposition \ref{normal=binormal} and the fact that a compact $R_1$ topological space is normal \cite[Theorem 8]{R1space}. \end{proof}
	
	\begin{defn}Suppose $(X,\tau)$ is a $\bbB$-topological space and    $\theta\in\bbB^X$. We say that $\theta$ is a saturated ($\bbB$-valued) set of $(X,\tau)$ if the map $\theta\colon(X,\Omega(\tau))\lra(\bbB,\ra)$ preserves $\bbB$-order; i.e.,  $\Omega(\tau)(x,y)\leq\theta(x)\ra \theta(y)$ for all $x,y\in X$. \end{defn}
	
	The (pointwise) join and meet of any family of saturated  sets of a $\bbB$-topological space are saturated. 
	
	\begin{prop}\label{saturated set as intersection} Suppose $(X,\tau)$ is a $\bbB$-topological space and $\theta\in\bbB^X$. Then   $\theta$ is a saturated set of $(X,\tau)$ if and only if $\theta$ is the intersection of a family of open sets of $(X,\tau)$. \end{prop}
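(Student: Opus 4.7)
The plan is to prove the two implications separately, with the bulk of the work lying in the necessity. For sufficiency, the observation is that every open set is saturated and that saturation is preserved under pointwise meets. The former is immediate from the definition of $\Omega(\tau)$, which gives $\Omega(\tau)(x,y)\leq\lam(x)\ra\lam(y)$ for each $\lam\in\tau$. For the latter, if $\theta=\bigwedge_{i\in I}\lam_i$ with each $\lam_i\in\tau$, then transforming via the adjunction $a\wedge-\dashv a\ra-$ available in the Boolean algebra $\bbB$ yields $\Omega(\tau)(x,y)\wedge\lam_i(x)\leq\lam_i(y)$ for each $i$; taking the pointwise meet over $i$ and transforming back gives the saturation of $\theta$.

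For necessity, the first step is to translate saturation of $\theta$ into componentwise data. By a direct case analysis in $\bbB$ combined with Proposition \ref{specialization order via component}(i), $\theta$ is saturated in $(X,\tau)$ if and only if $\theta[\dbt]$ is upward closed with respect to the specialization order of $(X,\tau[\dbt])$ and $\theta[\dbf]$ is upward closed with respect to that of $(X,\tau[\dbf])$; equivalently, $\theta[\dbt]$ and $\theta[\dbf]$ are saturated subsets in the usual topological sense. Appealing to the classical fact that saturated sets of a topological space are intersections of the open sets containing them, one obtains
\[\theta[\dbt]=\bigcap\{U\in\tau[\dbt]:U\supseteq\theta[\dbt]\}, \quad \theta[\dbf]=\bigcap\{V\in\tau[\dbf]:V\supseteq\theta[\dbf]\}.\]

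The second step is to lift this componentwise representation to genuine $\bbB$-valued open sets. For each $U\in\tau[\dbt]$ with $U\supseteq\theta[\dbt]$, set $\lam_U=\dbt_U\vee\dbf_X$; a routine check confirms that $\lam_U\in\tau$, that $\lam_U\geq\theta$, and that $\lam_U(x)=\dbf$ whenever $x\notin U$. Symmetrically, for each $V\in\tau[\dbf]$ with $V\supseteq\theta[\dbf]$ the open set $\mu_V=\dbt_X\vee\dbf_V$ satisfies $\mu_V\geq\theta$ and $\mu_V(x)=\dbt$ whenever $x\notin V$. I would then verify, by pointwise case analysis on $\theta(x)\in\{0,\dbt,\dbf,1\}$, that the meet of all the $\lam_U$ and $\mu_V$ coincides with $\theta$: for instance, if $x\notin\theta[\dbt]$ then some $\lam_U$ forces the value at $x$ to be at most $\dbf$, and dually if $x\notin\theta[\dbf]$ then some $\mu_V$ forces the value at $x$ to be at most $\dbt$, so combining the two cases pins down $\theta^\ast(x)=\theta(x)$ in each of the four possibilities. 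The main obstacle is simply this final bookkeeping, which is straightforward once the translation to componentwise saturation is in place.
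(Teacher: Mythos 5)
Your proposal is correct, but the necessity half follows a genuinely different route from the paper. You reduce saturation to componentwise data --- $\theta$ is saturated iff $\theta[\dbt]$ is an upper set for $\leq_\dbt$ and $\theta[\dbf]$ is an upper set for $\leq_\dbf$ (which does follow from Proposition \ref{specialization order via component}(i) by splitting the inequality $\Omega(\tau)(x,y)\wedge\theta(x)\leq\theta(y)$ along the complemented pair $\dbt,\dbf$) --- then invoke the classical fact that upper sets are intersections of open sets, and lift back via the open sets $\dbt_U\vee\dbf_X$ and $\dbt_X\vee\dbf_V$; the final four-case bookkeeping goes through as you describe. The paper instead argues directly: using $\Omega(\tau)(x,y)=\overline{1_y}(x)$, saturation gives $\theta(x)\leq\big(\overline{1_y}\ra\theta(y)\big)(x)$ for all $x,y$, and each $\overline{1_y}\ra\theta(y)=\theta(y)_X\vee\neg(\overline{1_y})$ is open because $\neg(\overline{1_y})$ is; since $\overline{1_x}(x)\ra\theta(x)=\theta(x)$, the meet of this family over $y\in X$ is exactly $\theta$. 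The paper's argument is shorter, produces an explicit canonical family indexed by points, and has the form one would use for more general quantale-valued spaces (it needs only that the negation of a point closure is open); your argument trades that for a reduction to a purely classical statement about topological spaces, at the cost of proving the componentwise characterization of saturation --- which you assert by "direct case analysis" and which is itself a worthwhile observation, but should be written out since it is the crux of your reduction.
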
 
	
	\begin{proof}By definition we have $$\Omega(\tau)(x,y)= \bigwedge_{\lam\in\tau}\lam(x)\ra\lam(y), $$ so every open set is  saturated, hence the intersection of any family of open sets is saturated. This proves the sufficiency. 
		
		For necessity, suppose $\theta\colon X\lra\bbB$ is a  saturated set of $(X,\tau)$. Then for each $x$ and each $y$, $$\theta(x)\leq\Omega(\tau)(x,y)\ra\theta(y)= \overline{1_y}(x)\ra\theta(y) = (\overline{1_y}\ra\theta(y))(x).$$ Since $\overline{1_y}\ra\theta(y)=\theta(y)_X\vee \neg(\overline{1_y})$, then $\overline{1_y}\ra\theta(y)$ is open and $\theta$ is contained in the intersection of the family of open sets $\{\overline{1_y}\ra\theta(y): y\in X\}.$  Finally, since $\theta(x)=\overline{1_x}(x)\ra\theta(x)$ for all $x\in X$,  $\theta$ equals the intersection of that family of open sets, as desired. \end{proof}
	
	Before presenting the next definition, we recall that for each $\bbB$-topological space $(X,\tau)$, the map $i_X\colon\bbB\lra\tau$ assigns to each element $b$ of $\bbB$ the constant open set $X\lra\bbB$ with value $b$; i.e., $i_X(b)=b_X$. 
	\begin{defn} Suppose $(X,\tau)$ is a $\bbB$-topological space. A  map $F\colon\tau\lra\bbB$ is a $\bbB$-filter of $\tau$ if it satisfies  \begin{enumerate}[label=\rm(F\arabic*)] \setlength{\itemsep}{0pt} \item $F\circ i_X=\id_\bbB$, i.e., $F(b_X)=b$ for all $b\in\bbB$; \item $F(\lam\wedge\mu)= F(\lam)\wedge F(\mu)$ for all $\lam,\mu\in\tau$.  
		\end{enumerate}
		
		A $\bbB$-filter is Scott open if it satisfies  moreover \begin{enumerate}\item[\rm (F3)] $F\big(\bigvee\Lambda\big)= \bigvee_{\lam\in \Lambda}F(\lam)$ for each directed subset $\Lambda$ of $\tau$.
	\end{enumerate} \end{defn}
	
	For each $\bbB$-filter $F$ and each element $\lam$ of $\tau$, from (F1) and (F2) one verifies  that $F(\dbt\wedge\lam) =\dbt\wedge F(\lam)$ and $F(\dbf\wedge\lam) =\dbf\wedge F(\lam)$, hence $F(\lam)=F(\dbt\wedge\lam)\vee F(\dbf\wedge\lam)$. 
	
	With  the notions of compact sets, saturated sets, and Scott open $\bbB$-filters at hand, we are able to formulate and prove a Hofmann-Mislove theorem for bitopological spaces. We begin with a lemma.
	
	\begin{lem}\label{compact sets generate scott open} Suppose $(X,\tau)$ is a $\bbB$-topological space  and   $\theta\in\bbB^X$.   \begin{enumerate}[label=\rm(\roman*)] \setlength{\itemsep}{0pt} \item The map $\sub_X(\theta,-)\colon\tau\lra\bbB$ is a $\bbB$-filter if and only if $\theta$ is inhabited; that is, $\bigvee_{x\in X}\theta(x)=1$. \item  The map $\sub_X(\theta,-)\colon\tau\lra\bbB$ is a Scott open $\bbB$-filter if and only if $\theta$ is inhabited and compact.\end{enumerate}\end{lem}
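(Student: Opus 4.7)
The plan is to verify the three filter axioms (F1), (F2), (F3) directly from the definition $\sub_X(\theta,\lam)=\bigwedge_{x\in X}(\theta(x)\ra\lam(x))$, and read off exactly which condition on $\theta$ each axiom forces. Since $\bbB$ is a (complete) Heyting algebra, I would use the two standard identities
\[ \bigwedge_{x\in X}(\theta(x)\ra b) = \Bigl(\bigvee_{x\in X}\theta(x)\Bigr)\ra b, \qquad a\ra(b\wedge c)=(a\ra b)\wedge(a\ra c), \]
without further comment.

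For part (i), I would first handle (F1): the above identity gives $\sub_X(\theta,b_X) = (\bigvee_x\theta(x))\ra b$. If $\theta$ is inhabited, then $1\ra b=b$ and (F1) holds. Conversely, if (F1) holds, then taking $b=0$ forces $(\bigvee_x\theta(x))\ra 0=0$, and since in a Boolean algebra $a\ra 0=\neg a$, this gives $\bigvee_x\theta(x)=1$. Next I would verify (F2) unconditionally: using the second identity pointwise and then swapping $\bigwedge$ with $\wedge$,
\[ \sub_X(\theta,\lam\wedge\mu) = \bigwedge_{x\in X}\bigl((\theta(x)\ra\lam(x))\wedge(\theta(x)\ra\mu(x))\bigr) = \sub_X(\theta,\lam)\wedge\sub_X(\theta,\mu). \]
So (F2) is automatic, and (i) follows.

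For part (ii), the point is that (F3) for $\sub_X(\theta,-)$ is literally the compactness condition on $\theta$: both read
\[ \sub_X\Bigl(\theta,\bigvee\Lambda\Bigr)=\bigvee_{\lam\in\Lambda}\sub_X(\theta,\lam) \]
for every directed family $\Lambda$ of open sets. Combined with (i), which already identifies (F1)+(F2) with inhabitedness, this gives the equivalence.

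There is no real obstacle in this lemma; it is essentially a bookkeeping argument. The only mildly delicate step is the first one, extracting inhabitedness from (F1), where one must use that $\bbB$ is Boolean (so $a\ra 0=0\Rightarrow a=1$) rather than merely a frame. Everything else is a direct unfolding of definitions.
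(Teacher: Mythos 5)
Your proposal is correct and follows essentially the same route as the paper: compute $\sub_X(\theta,b_X)=\bigl(\bigvee_{x\in X}\theta(x)\bigr)\ra b$ to identify (F1) with inhabitedness, observe that (F2) holds unconditionally, and note that (F3) is verbatim the compactness condition. Your explicit remark that extracting inhabitedness from (F1) uses the Boolean law $a\ra 0=0\Rightarrow a=1$ is a point the paper leaves implicit, but the argument is the same.
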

	
	\begin{proof} (i) If $\sub_X(\theta,-)\colon \tau\lra\bbB$ is a $\bbB$-filter, then by (F1) we have $$0=\sub_X(\theta,0_X)= \bigwedge_{x\in X} (\theta(x)\ra 0) =  \Big(\bigvee_{x\in X}\theta(x)\Big)\ra 0,$$ which shows that $\theta$ is inhabited. Conversely,  if $\theta$ is inhabited, we show that $\sub_X(\theta,-)$ is a $\bbB$-filter of $\tau$. It is trivial that $\sub_X(\theta,-)$ satisfies (F2). For (F1)  we calculate: for each $b\in\bbB$, $$\sub_X(\theta,b_X)= \bigwedge_{x\in X} (\theta(x)\ra b) =  \Big(\bigvee_{x\in X}\theta(x)\Big)\ra b= b.$$  
		
		(ii) This follows from a combination of the following facts:   $\sub_X(\theta,-)$ satisfies (F1)   if and only if  $\theta$ is inhabited; $\sub_X(\theta,-)$ satisfies (F3) if and only if  $\theta$ is compact;   $\sub_X(\theta,-)$ always satisfies (F2).  \end{proof} 
	
	\begin{thm}\label{HMT for Bitop} Suppose $(X,\tau)$ is a $\bbB$-topological space.  \begin{enumerate}[label=\rm(\roman*)] \setlength{\itemsep}{0pt} \item  For each inhabited, saturated and compact set $\theta$ of $(X,\tau)$, the map $\sub_X(\theta,-)\colon \tau\lra\bbB$ is a Scott open $\bbB$-filter of $\tau$. \item For all inhabited, saturated and compact  sets $\theta_1$ and $\theta_2$ of $(X,\tau)$, it holds that $$\sub_X(\theta_2,\theta_1)=\bw_{\lam\in\tau}(\sub_X(\theta_1,\lam)\ra\sub_X(\theta_2,\lam)).$$  While the left side measures the truth degree that $\theta_2$ is contained in $\theta_1$, the right side measures  the truth degree that   $\sub_X(\theta_1,-)$ is contained in  $\sub_X(\theta_2,-)$.
			\item If $(X,\tau)$ is $\bbB$-sober, then for each Scott open $\bbB$-filter $F$ of $\tau$, there is a unique inhabited, saturated and compact  set $\theta$ of $(X,\tau)$ such that $F =\sub_X(\theta,-)$. \end{enumerate} \end{thm}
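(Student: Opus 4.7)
For part (i), the claim is immediate from Lemma~\ref{compact sets generate scott open}(ii): inhabitedness of $\theta$ gives axiom (F1), compactness gives (F3), and (F2) holds for any $\theta$; saturation plays no role. For part (ii), the inequality $\leq$ uses transitivity of the $\bbB$-order on $\bbB^X$: for each $\lam \in \tau$, $\sub_X(\theta_2,\theta_1) \wedge \sub_X(\theta_1,\lam) \leq \sub_X(\theta_2,\lam)$, so $\sub_X(\theta_2,\theta_1) \leq \sub_X(\theta_1,\lam) \ra \sub_X(\theta_2,\lam)$, and one takes the meet over $\lam$. For the reverse, saturation of $\theta_1$ provides via Proposition~\ref{saturated set as intersection} a set $S \subseteq \tau$ with $\theta_1 = \bw_{\lam \in S}\lam$; a pointwise computation then gives $\sub_X(\theta_2,\theta_1) = \bw_{\lam \in S}\sub_X(\theta_2,\lam)$, and since $\sub_X(\theta_1,\lam) = 1$ for every $\lam \in S$, restricting the meet on the right-hand side of the formula to $\lam \in S$ already yields $\sub_X(\theta_2,\theta_1)$.

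For part (iii), the candidate is
$$\theta(x) = \bw_{\lam \in \tau}\bigl(F(\lam) \ra \lam(x)\bigr).$$
As a function of $x$, the term $F(\lam) \ra \lam(x) = \neg F(\lam) \vee \lam(x)$ is the pointwise join of the constant open set with value $\neg F(\lam)$ and the open set $\lam$, hence is itself open; so $\theta$ is a meet of a family of open sets, and therefore saturated by Proposition~\ref{saturated set as intersection}. The inequality $F(\mu) \leq \sub_X(\theta,\mu)$ for each $\mu \in \tau$ is again transitivity: $\theta(x) \leq F(\mu) \ra \mu(x)$ for every $x$, so $F(\mu) \leq \theta(x) \ra \mu(x)$, and we take the meet over $x$.

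The reverse inequality $\sub_X(\theta,\mu) \leq F(\mu)$ is the substantive step, which I would prove by reducing to the classical Hofmann--Mislove theorem on the two component $T_0$-reflections. Unfolding the implication operator on $\bbB = \{0,\dbt,\dbf,1\}$ yields, for each $b \in \{\dbt,\dbf\}$, the equivalence
$$\sub_X(\theta,\mu) \geq b \iff \theta[b] \subseteq \mu[b].$$
On the other hand, using $b \wedge \mu = b_{\mu[b]}$ and $F(b \wedge \mu) = b \wedge F(\mu)$, one sees $F(\mu) \geq b \iff F(b_{\mu[b]}) \geq b \iff \mu[b] \in \mathcal{H}_b$, where
$$\mathcal{H}_b \coloneqq \bigl\{U \in \tau[b] : F(b_U) \geq b\bigr\}.$$
A direct check, using the frame embedding $U \mapsto b_U$ from $\tau[b]$ into $\tau$ together with axioms (F1)--(F3), shows that $\mathcal{H}_b$ is a proper Scott open filter on the frame $\tau[b]$, and by construction $\theta[b] = \bigcap_{U \in \mathcal{H}_b} U$. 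Now $\bbB$-sobriety of $(X,\tau)$ implies, by Corollary~\ref{componentwise sober}, that the $T_0$-reflection $Y_b$ of the topological space $(X,\tau[b])$ is sober; the quotient map identifies the open-set frame of $Y_b$ with $\tau[b]$, so applying the classical Hofmann--Mislove theorem in $Y_b$ to the Scott open filter corresponding to $\mathcal{H}_b$ yields $\mu[b] \supseteq \theta[b] \iff \mu[b] \in \mathcal{H}_b$. Chaining the three equivalences gives $\sub_X(\theta,\mu) \geq b \iff F(\mu) \geq b$ for each $b \in \{\dbt,\dbf\}$, hence $\sub_X(\theta,\mu) = F(\mu)$.

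With $F = \sub_X(\theta,-)$ in hand, Lemma~\ref{compact sets generate scott open}(ii), read as an ``if and only if'', immediately gives that $\theta$ is inhabited and compact. Uniqueness of $\theta$ follows from part (ii): any other inhabited, saturated, compact $\theta'$ with $\sub_X(\theta',-) = F$ satisfies $\sub_X(\theta,\theta') = \bw_\lam(F(\lam) \ra F(\lam)) = 1$ and symmetrically $\sub_X(\theta',\theta) = 1$, so $\theta = \theta'$ pointwise. The main obstacle is the reverse inequality in the third paragraph; the key idea that exploits $\bbB$-sobriety is to encode $F$ level-wise, for $b \in \{\dbt,\dbf\}$, as a classical Scott open filter $\mathcal{H}_b$ on $\tau[b]$, and then reduce to classical Hofmann--Mislove on the sober $T_0$-reflections of the two components.
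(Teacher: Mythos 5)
Your proof is correct, and in the two places where real work is needed it takes a genuinely different route from the paper. For part (ii) the paper proves the nontrivial inequality $\bw_{\lam\in\tau}(\sub_X(\theta_1,\lam)\ra\sub_X(\theta_2,\lam))\leq\sub_X(\theta_2,\theta_1)$ by a case analysis on the value of $\sub_X(\theta_2,\theta_1)$, producing for a witnessing point $x$ the open set $\overline{1_x}\ra\theta_1(x)$; you instead restrict the meet to a family $S\subseteq\tau$ with $\theta_1=\bw_{\lam\in S}\lam$ (Proposition \ref{saturated set as intersection}) and use that $\sub_X(\theta_2,-)$ preserves meets, which is shorter and avoids the case split. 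For part (iii) your candidate $\theta=\bw_{\lam\in\tau}(F(\lam)\ra\lam)$ coincides with the paper's $\dbt_{S_\dbt}\vee\dbf_{S_\dbf}$, since $\theta[b]=\bigcap\mathcal{H}_b$ for $b\in\{\dbt,\dbf\}$; but where the paper establishes $\sub_X(\theta,\lam)\leq F(\lam)$ by adapting the Keimel--Paseka argument---choosing opens maximal outside the filters $\CF$ and $\CG$, noting that their complements form a pair of irreducible closed sets, and invoking $\bbB$-sobriety to produce a representing point---you pass to the $T_0$-reflections of the two component spaces, which are sober by Corollary \ref{componentwise sober}, and apply the classical Hofmann--Mislove theorem there. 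Your route is shorter and makes transparent that the result is a levelwise assembly of two classical instances, at the cost of using the classical theorem as a black box and leaning on Corollary \ref{componentwise sober}; the paper's route is self-contained and exercises the definition of $\bbB$-sobriety (representation of pairs of irreducible closed sets) directly. Your uniqueness argument via part (ii) also differs from the paper's computation $\theta=\bw\{\lam\in\tau: F(\lam)=1\}$ and is slightly slicker; both are fine.
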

	
	\begin{proof}(i) Lemma \ref{compact sets generate scott open}. 
		
		(ii) First we show that if $ \sub_X(\theta_2,\theta_1)\leq\dbt$  then $$\bw_{\lam\in\tau}(\sub_X(\theta_1,\lam)\ra\sub_X(\theta_2,\lam))\leq \dbt. $$   Since $ \sub_X(\theta_2,\theta_1)\leq\dbt$, then $\theta_2(x)\ra\theta_1(x)  \leq\dbt$ for some $x\in X$. Consider the open set $\mu$ of $(X,\tau)$ given by $$\mu(y)=\overline{1_x}(y)\ra\theta_1(x)=\neg(\overline{1_x}(y))\vee\theta_1(x) .$$ Since $\theta_1$ is saturated, it follows that   $\overline{1_x}(y)=\Omega(\tau)(y,x)\leq \theta_1(y)\ra\theta_1(x)$, then $$\theta_1(y) \leq \overline{1_x}(y)\ra\theta_1(x)=\mu(y),$$    which shows that $\sub_X(\theta_1,\mu)=1$. Since    $$\sub_X(\theta_2,\mu)\leq \theta_2(x)\ra \mu(x) = \theta_2(x)\ra\theta_1(x) \leq\dbt,$$  then $$\bw_{\lam\in\tau}(\sub_X(\theta_1,\lam)\ra\sub_X(\theta_2,\lam))\leq \sub_X(\theta_1,\mu)\ra\sub_X(\theta_2,\mu)\leq \dbt. $$ Likewise,  if $ \sub_X(\theta_2,\theta_1)\leq\dbf$  then $$\bw_{\lam\in\tau}(\sub_X(\theta_1,\lam)\ra\sub_X(\theta_2,\lam))\leq \dbf. $$ Therefore, $$\bw_{\lam\in\tau}(\sub_X(\theta_1,\lam)\ra\sub_X(\theta_2,\lam))\leq \sub_X(\theta_2,\theta_1). $$ 
		
		The converse inequality is trivial. 
		
		(iii) Suppose $(X,\tau)$ is $\bbB$-sober and $F\colon \tau\lra\bbB$ is a Scott open $\bbB$-filter of $\tau$.  We wish to prove that there is a unique inhabited, saturated and compact  set $\theta$ of $(X,\tau)$ such that $F(\lam)=\sub_X(\theta,\lam)$ for all $\lam\in\tau$. The proof is an adaptation  of that for the Hofmann-Mislove theorem  in \cite{KP1994} to the bitopological context. 
		
		We prove the existence of $\theta$ first. 
		Let $$\CF =\{U\in\tau[\dbt]:   F(\dbt_U)=\dbt\} \quad\text{and}\quad \CG=\{V\in\tau[\dbf]:  F(\dbf_V)=\dbf\}.$$ Making use of the equality $F(\dbt\wedge\lam) =\dbt\wedge F(\lam)$  one readily verifies that $\CF$ is a Scott open filter of the topology $\tau[\dbt]$. Likewise, $\CG$ is a Scott open filter of the topology $\tau[\dbf]$.   Let $S_\dbt$ be the intersection of all members of $\CF$, $S_\dbf$ be the intersection of all members of $\CG$. 
		Put $$\theta_\dbt= \dbt_{S_\dbt}, \quad \theta_\dbf=\dbf_{S_\dbf},\quad  \theta=\theta_\dbt\vee\theta_\dbf.$$  
		We show that $\theta$ satisfies the requirement. 
		Since both $\theta_\dbt$ and $\theta_\dbf$ are saturated, $\theta$ is saturated. If we could show that $F(\lam)=\sub_X(\theta,\lam)$ for all $\lam\in \tau$, then  $\theta$ would be compact and inhabited by Lemma \ref{compact sets generate scott open}. We do this in two steps.
		
		{\bf Step 1}.   $F(\lam)\leq \sub_X(\theta,\lam)$. If $\dbt\leq F(\lam)$, then   $\theta_\dbt\leq \lam$, hence $$\sub_X(\theta,\lam) =\sub_X(\theta_\dbt,\lam)\wedge\sub_X(\theta_\dbf,\lam)\geq 1\wedge\dbt=\dbt.$$ Likewise, if $\dbf\leq F(\lam)$, then $\dbf\leq\sub_X(\theta,\lam)$. Therefore, $F(\lam)\leq \sub_X(\theta,\lam)$. 
		
		{\bf Step 2}.   $F(\lam)\geq \sub_X(\theta,\lam)$. We distinguish three cases.  
		
		Case 1. $\sub_X(\theta,\lam)=1$. By definition of $\sub_X$ we have $$S_\dbt=\theta[\dbt]\subseteq \lam[\dbt] \quad\text{and}\quad S_\dbf=\theta[\dbf]\subseteq \lam[\dbf].$$ Suppose on the contrary that $F(\lam)\not=1$. From $F(\lam)=F(\dbt\wedge\lam)\vee F(\dbf\wedge\lam)$ one deduces that either $\lam[\dbt]\notin\CF$ or $\lam[\dbf]\notin\CG$.  Without loss of generality we assume that $\lam[\dbt]$  is not in $\CF$.  Take an open set $U_0$ of $(X,\tau[\dbt])$ that contains $\lam[\dbt]$ and is maximal with respect to not being in the filter $\CF$; take an open set $V_0$ of $(X,\tau[\dbf])$ that is maximal with respect to not being in the filter $\CG$. Let $K_\dbt$ be the complement of $U_0$ and let $K_\dbf$ be the complement of $V_0$. From maximality of $U_0$ and $V_0$ one verifies that $(K_\dbt,K_\dbf)$ is a pair of irreducible closed sets of the bitopological space $(X,\tau[\dbt],\tau[\dbf])$.  Since $(X,\tau)$ is $\bbB$-sober, there is a unique point $x$ of $X$ such that $K_\dbt$ is the closure of $\{x\}$ in $(X,\tau[\dbt])$ and $K_\dbf$ is the closure of $\{x\}$ in $(X,\tau[\dbf])$. We claim that every member $U$ of $\CF$ contains $x$, for otherwise $U$ misses the closure of $\{x\}$ in $(X,\tau[\dbt])$, and
		hence $U \subseteq U_0$, which would imply $U_0\in\CF$, a contradiction. Therefore, $x\in S_\dbt\subseteq \lam[\dbt]$,   contradicting that $\lam[\dbt]\subseteq U_0$.  
		
		Case 2. $\sub_X(\theta,\lam)=\dbt$. By definition of $\sub_X$ we have $\theta\leq\dbt\ra\lam$, then $F(\dbt\ra\lam)=1$ by Case 1. Therefore $$ F(\lam)\geq F(\dbt\wedge(\dbt\ra\lam))=\dbt\wedge 1=\dbt.$$
		
		Case 3. $\sub_X(\theta,\lam)=\dbf$. Similar to Case 2.  
		
		Now we prove the uniqueness of $\theta$. Suppose $\theta\in\bbB^X$ satisfies the requirement. Since $\theta$ is saturated,  it is an intersection of a family of open sets by Proposition \ref{saturated set as intersection}. Since  $F(\lam)=\sub_X(\theta,\lam)$  and $$\sub_X(\theta,\lam)=1\iff\theta\leq\lam$$ for each open set $\lam$,  it follows that $\theta=\bw\{\lam\in\tau:F(\lam)=1\}$, which proves the uniqueness of $\theta$. \end{proof}


\begin{thebibliography}{[99]} \setlength{\itemsep}{0pt}
	
	\bibitem{AHS}J. Ad\'{a}mek, H. Herrlich, G.E. Strecker, \emph{Abstract and Concrete Categories}, John Wiley and Sons,  1990. Available at http://www.tac.mta.ca/tac/reprints 
	
	\bibitem{AGH}I. Arrieta,  J. Guti\'{e}rrez Garc\'{i}a, U. H\"{o}hle,  Enriched lower separation axioms and
	the principle of enriched continuous extension, Fuzzy Sets and Systems 468 (2023) 108633.
	
	\bibitem{BBH} B. Banaschewski, G.C.L. Br\"{u}mmer, K.A. Hardie, Biframes and bispaces, Quaestiones Mathematicae 6 (1983) 13-25.
	
	\bibitem{BBGK} G. Bezhanishvili, N. Bezhanishvili, D. Gabelaia,   A. Kurz,  Bitopological duality for distributive lattices and Heyting algebras, 
	Mathematical Structures in Computer Science   20 (2010)    359-393.
	\bibitem{BK47} G. Birkhoff, S.A. Kiss, A ternary operation in distributive lattices, Bulletin of the American Mathematical Society  53 (1947) 749-752.
	
	\bibitem{Davis}A.S. Davis, Indexed systems of neighbourhoods for general topological spaces, American Mathematics Monthly 68 (1961) 886-893.
	
	\bibitem{Gierz2003} G. Gierz, K.H. Hofmann, K. Keimel, J.D. Lawson, M. Mislove,   D.S. Scott, \emph{Continuous Lattices and Domains}, Cambridge University Press, Cambridge, 2003.
	
	\bibitem{Goubault} J. Goubault-Larrecq,   \emph{Non-Hausdorff Topology and Domain Theory}, Cambridge University Press,   2013.
	
	\bibitem{HM1981}   K.H. Hofmann, M.W. Mislove, Local compactness and continuous lattices, in: \emph{Continuous Lattices},  Lecture Notes in Mathematics, Volume 871, Springer, 1981, pp. 209-248.
	
	\bibitem{Hutton75}B. Hutton, Normality in fuzzy topological spaces, Journal of Mathematical Analysis and Applications 50 (1975) 74-79.
	
	\bibitem{HR80}B. Hutton, I.L. Reilly, Separation axioms in fuzzy topological spaces, Fuzzy Sets and Systems 3 (1980)  93-104.
	
	\bibitem{Jakl} T. Jakl,   \emph{d-Frames as Algebraic Duals of Bitopological Spaces}, Thesis, Charles University \& University of Birmingham, 2017. 
	
	\bibitem{Johnstone} P.T. Johnstone, \emph{Stone Spaces}, Cambridge University Press, Cambridge, 1982.
	
	\bibitem{JM2006} A. Jung, M.A. Moshier, \emph{On the Bitopological Nature of Stone Duality}, Technical Report CSR-06-13, School of Computer Science, The University of Birmingham, 2006, 110 pages.  
	
	\bibitem{JM2008} A. Jung, M.A.  Moshier,  A Hofmann-Mislove theorem for bitopological spaces, The Journal of Logic and Algebraic Programming 76 (2008) 161-174. 
	
	\bibitem{KP1994} K. Keimel, J. Paseka, A direct proof of the Hofmann-Mislove theorem, Proceedings of the American Mathematical Society 120 (1994) 301-303.	
	
	\bibitem{Kelly} J.C. Kelly,   Bitopological spaces, Proceedings of the London Mathematical Society 13 (1963) 71-89.
	
	\bibitem{Kubiak} T. Kubiak, On $L$-Tychonoff spaces, Fuzzy Sets and Systems 73 (1995) 25-53.
	
	\bibitem{LZ2006} H. Lai, D. Zhang, Fuzzy preorder and fuzzy topology, Fuzzy Sets and Systems 157 (2006) 1865-1885.
	
	\bibitem{Lal} S. Lal,  Pairwise concepts in bitopological spaces, Journal of the Australian Mathematical Society   26 (1978)  241-250.
	
	\bibitem{Lane} E.P. Lane, Bitopological spaces and quasi-uniform spaces, Proceedings of the London Mathematical Society 17 (1967) 241-256.
	
	\bibitem{Lawson2010} J. Lawson, Stably compact spaces, Mathematical Structures in Computer Science 21 (2011) 125–169.
	
	\bibitem{Lowen76}R. Lowen,  Fuzzy topological spaces and fuzzy compactness, Journal of Mathematical Analysis and Applications  56 (1976) 621-633.
	
	\bibitem{Mu}M.G. Murdeshwar, S.A. Naimpally, \emph{Quasi-uniform Topological Spaces}, Noordhoff, 1966.
	
	\bibitem{R1space}M.G. Murdeshwar, S.A. Naimpally, $R_1$ topological spaces, Canadian Mathematical Bulletin    9 (1966)   521-523.
	
	\bibitem{Nel} D.L. Nel, R.G. Wilson, Epireflections in the category of $T_0$ spaces, Fundamenta Mathematicae 75 (1972) 69-74.
	
	\bibitem{Noor} R. Noor, A.K. Srivastava, The categories $L$-${\sf Top}_0$ and $L$-{\sf Sob} as
	epireflective hulls, Soft Computing 18 (2014) 1865-1871.
	
	\bibitem{Patty} C.W. Patty,  Bitopological spaces, Duke Mathematical Journal 34 (1967) 387-392.
	
	\bibitem{PP2012} J. Picado, A. Pultr, \emph{Frames and Locales}, Birkh\"{a}user, 2012.
	\bibitem{Salbany}  S. Salbany, \emph{Bitopological Spaces, Compactifications and Completions}, Department of Mathematics, University of Cape Town, 1974.
	
	\bibitem{Srivastava} A.K. Srivastava, $R_1$ fuzzy topological spaces, Journal of Mathematical Analysis and Applications 127 (1987) 151-154.
	
	\bibitem{Willard} S.  Willard, \emph{General Topology}, Addison-Wesley Publishing Company,   Massachusetts, 1970.
	
	\bibitem{Zhang2018} D. Zhang, Sobriety of quantale-valued cotopological spaces, Fuzzy Sets and Systems 350 (2018) 1-19.	
	
	\bibitem{ZL95}D. Zhang, Y. Liu, $L$-fuzzy version of Stone's representation theorem for distributive lattices, Fuzzy Sets and Systems 76 (1995) 259-270.	
	
	\bibitem{Zhang2022} D. Zhang, G. Zhang, Sober topological spaces valued in a quantale, Fuzzy Sets and Systems 444 (2022) 30-48.
\end{thebibliography}
\end{document}